\documentclass[11pt,reqno]{amsart}

\usepackage[utf8]{inputenc}
\usepackage[T1]{fontenc}

\usepackage{geometry}
\usepackage{mathtools}
\usepackage{amssymb}
\usepackage{mathrsfs}
\usepackage{bbm}
\numberwithin{equation}{section}

\theoremstyle{plain}
\newtheorem{theorem}{Theorem}[section]
\newtheorem{proposition}[theorem]{Proposition}
\newtheorem{lemma}[theorem]{Lemma}
\newtheorem{corollary}[theorem]{Corollary}

\theoremstyle{definition}

\theoremstyle{remark}

\usepackage{graphicx}
\usepackage[nopatch=footnote]{microtype}
\usepackage{xcolor}

\usepackage[toc]{appendix}

\usepackage[
  colorlinks=true,
  linkcolor=blue,
  citecolor=blue,
  urlcolor=blue,
  pdftitle={Maximum of the Two-Dimensional Directed Polymer in the Critical Window},
  pdfauthor={Clement Cosco, Shuta Nakajima, Ofer Zeitouni}
]{hyperref}
\usepackage[capitalise,nameinlink]{cleveref}

\title[Maximum of Two-Dimensional Critical  Directed Polymer]{Maximum of the Two-Dimensional Directed Polymer in the Critical Window}
\author{Clement Cosco}
\address[Clement Cosco]{Université Paris Dauphine, Paris, France}
\email{cosco@ceremade.dauphine.fr}
\author{Shuta Nakajima}
\address[Shuta Nakajima]{Department of Mathematics, Keio University, Japan}
\email{njima@keio.jp}
\author{Ofer Zeitouni}
\address[Ofer Zeitouni]{Weizmann Institute, Rehovot, Israel}
\email{ofer.zeitouni@weizmann.ac.il}
\thanks{O.Z. was supported by Israel Science Foundation grant \#621/24.}
\date{}

\newcommand{\E}{\mathbb{E}}
\newcommand{\Pbb}{\mathbb{P}}
\newcommand{\Zbb}{\mathbb{Z}}
\newcommand{\Rbb}{\mathbb{R}}
\newcommand{\1}{\mathbbm{1}}

\newcommand{\F}{\mathcal{F}}

\def\corO{}
\begin{document}

\begin{abstract}
{Let $Z_N^{\beta_N}(x)$ denote the partition function of the two-dimensional directed polymer started at $x\in \mathbb{Z}^2$, in Gaussian environemnt, in the critical window, with parameter $\beta_N=\sqrt{(1-\vartheta/\log N)/R_N}$. Here $R_N\sim (\log N)/\pi$ is the truncated collision Green function at the origin for the simple random walk on $\mathbb{Z}^2$. Let $\mathfrak M_N$ denote the maximum of $\log Z_N(x)$ over $|x|_\infty\leq \sqrt{N}$.
We prove that, for any $\vartheta\in \mathbb{R}$,  in probability, $$\mathfrak M_N/\sqrt{\log N}\to 2\sqrt{2}\text{ as }N\to\infty.$$}
\end{abstract}
\maketitle

\section{Main result}
Let ${S_\cdot}$ {denote}  the nearest-neighbor simple random walk on $\Zbb^2$, and let
$\{\omega(n,x):n\in\mathbb N,\ x\in\Zbb^2\}$ {(\textit{the environment})} be independent standard
Gaussian random variables. We write ${\mathrm E} _x$ and $\mathrm P_x$ for  expectation and probability of the simple random walk 
started at $x$, and $\mathbb{E} $ and $\mathbb P$ for expectation and
probability with respect to the environment. When no confusion occurs, we use the latter also as expectation and probability over both the walk and the environment, i.e. under the \textit{annealed} law.

Fix $\vartheta\in\Rbb$ throughout the paper. For all sufficiently large $N$, set\footnote{\corO{This choice slightly differs from the usual convention where the right hand side of \eqref{eq-091026a} equals $e^{\beta_N^2}-1$. Up to modifying the value of $\vartheta$, this has no effect on the arguments.}}
\begin{equation}
\label{eq-091026a}
\beta_N^2:=\frac{1-\vartheta/\log N}{R_N},
\end{equation}
where
\begin{equation}
\label{eq-091026b}
R_N:=\sum_{n=1}^N\mathrm P_0(S_{2n}=0)
=\frac{\log N}{\pi}+O(1).
\end{equation}
Define the (polymer) partition function 
\begin{equation}
\label{eq-091026c}
Z_N^{\beta_N}(x):={\mathrm E} _x\left[\exp\left(\sum_{n=1}^{N}
\left(\beta_N\omega(n,S_n)-\frac{\beta_N^2}{2}\right)\right)\right].
\end{equation}
We write ${\rm B}(z,r):=z+{\rm B}(r)$ for $z\in\mathbb Z^2$ with ${\rm B}(r):=[-r,r]^2\cap \Zbb^2$. Consider the maximum of the partition functions over the box ${\rm B}(\sqrt{N})$: 
\begin{equation}
\label{eq-091026d}
\mathfrak M_N:=\max_{x\in {\rm B}(\sqrt N)} \log Z_N^{\beta_N}(x).
\end{equation}
{Our main result concerns the asymptotics of $\mathfrak M_N$.}
\begin{theorem}\label{thm:critical-maximum}
As $N\to\infty$,
\[
\frac{\mathfrak M_N}{\sqrt{\log N}}\xrightarrow[N\to\infty]{\mathbb P} 2\sqrt{2}.
\]
\end{theorem}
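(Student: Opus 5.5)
The plan is to treat $\mathfrak M_N$ like the maximum of about $N$ weakly dependent variables $\log Z_N^{\beta_N}(x)$, $x\in{\rm B}(\sqrt N)$, each with an approximately Gaussian upper tail $\Pbb(\log Z_N^{\beta_N}(x)>u)\approx e^{-u^2/8}$ in the range $u\asymp\sqrt{\log N}$. With $\#{\rm B}(\sqrt N)\approx 4N=e^{\log N+O(1)}$, solving $u^2/8=\log N$ gives $u=2\sqrt2\sqrt{\log N}$. Accordingly, the proof splits into an upper bound by a first moment (union bound) and a lower bound by a decoupling plus one-point lower tail estimate.

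\emph{Upper bound.} By translation invariance and a union bound it suffices to show, for fixed $\varepsilon>0$, that $\Pbb(\log Z_N^{\beta_N}(0)>(2\sqrt2+\varepsilon)\sqrt{\log N})=o(N^{-1})$. I would use Markov's inequality with a moment of order $q=c\sqrt{\log N}$, where $c\approx \sqrt2/2$, which is the optimizer of $2q^2-qu$ at $u=2\sqrt2\sqrt{\log N}$. Gaussian integration gives
\[
\E\big[Z_N^{\beta_N}(0)^q\big]={\mathrm E}^{\otimes q}_0\Big[\exp\Big(\beta_N^2\sum_{1\le i<j\le q}L_N^{ij}\Big)\Big],
\]
where $L_N^{ij}$ is the number of collisions up to time $N$ of walks $i$ and $j$. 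The needed estimate is $\E[Z_N^{\beta_N}(0)^q]\le\exp((2+o(1))q^2)$ uniformly for $q\le C\sqrt{\log N}$. I would attempt this by the standard chaos/operator expansion for critical 2D moments (a renormalized collision-kernel resolvent estimate, treating the $\binom q2$ pair interactions one at a time and bounding the combinatorics of alternating pairs). The leading contribution should come from each pair collision $\beta_N^2L_N^{ij}$ acting like an order-one exponential, with effective strength slightly enhanced because $q\beta_N^2\to0$ only barely. Getting the sharp constant $2$ in the exponent, with errors $o(q^2)$ while $q$ grows like $\sqrt{\log N}$, is the step I expect to be the main obstacle. The pair interaction is at criticality, so one must show that many-body effects do not change the constant in this regime, for example through a Gaussian-comparison or hypercontractivity argument on the polynomial chaos combined with sharp $h=2$ asymptotics.

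\emph{Lower bound.} Fix $\delta>0$ and set $M=N^{1-\delta}$. I would write $Z_N^{\beta_N}(x)=\sum_y Z_{M}(x,y)\,Z_{M,N}(y)$, where the first factor covers the first $M$ steps ending at $y$ and the second is the partition function of the remaining $N-M$ steps from $y$. Then $\log Z_N^{\beta_N}(x)\ge\log Z_M^{\beta_N}(x)+\log\widehat Z(x)$, where $\widehat Z(x)$ is the average of $Z_{M,N}(y)$ under the endpoint polymer measure. The factor $\widehat Z(x)$ has mean one and should be tight from below, i.e. $\Pbb(\widehat Z(x)<e^{-K})$ is small, using the known lower-tail and positivity estimates for critical partition functions. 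Choose $N^{\delta}/C$ points $x_k\in{\rm B}(\sqrt N)$ that are spaced by $\gg\sqrt M\log N$. Then the variables $Z_M^{\beta_N}(x_k)$ are nearly independent: the walks are localized to disjoint boxes with overwhelming probability, and restricting them to those boxes makes the variables exactly independent. It then suffices to prove the matching one-point lower tail $\Pbb(\log Z_M^{\beta_N}(0)>u)\ge e^{-(1+o(1))u^2/8}$ at $u=(2\sqrt2-\varepsilon)\sqrt{\log N}$, taking $\delta$ close to $1$. Since $\log M\approx(1-\delta)\log N$, I would instead target the threshold relative to $\log N$ and use many scales.

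For this lower tail I would use a second moment or Paley--Zygmund argument after a change of measure. Tilt the environment by a shift $\omega\mapsto\omega+h$ on a space--time region near the origin, with $h$ chosen so that the Cameron--Martin cost is $\approx u^2/8$ and the gain in $\log Z$ is $\approx u$. Then show that under the tilted measure $\log Z$ concentrates above $u(1-o(1))$. Matching $u^2/8$ here is the dual of the $\exp(2q^2)$ moment bound, so the same moment computation, now requiring a lower bound and a control of the variance of the tilted partition function, enters again. Finally, combining independence over the $N^{\delta}$ blocks, the tightness of $\widehat Z$, and letting $\varepsilon,\delta$ tend to their limits gives $\mathfrak M_N\ge(2\sqrt2-\varepsilon')\sqrt{\log N}$ with high probability.
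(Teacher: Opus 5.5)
\textbf{The upper bound has a genuine gap.} Your whole plan rests on the claim that $\log Z_N(x)$ has a Gaussian upper tail $e^{-u^2/8}$ at level $u\asymp\sqrt{\log N}$. This is false, so the union bound cannot close.

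At criticality, $\log Z_N(0)$ is a sum of increments over time scales $t=N^v$ with variance density $\sigma^2(v)=(1-v)^{-1}$. Its variance is of order $\log\log N$ (indeed $\E[Z_N^2]\asymp\log N$), not $4$. The constant is $2\sqrt2=\sqrt2\int_0^1\sigma(v)\,dv$. Your ``variance $4$'' is $(\int_0^1\sigma)^2$, whereas the one-point variance is $\int_0^1\sigma^2$, which diverges (it is $\approx\log\log N$ after the cutoff $1-v\gtrsim 1/\log N$).

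The large-variance increments come from late times and are shared by all $x$ within distance $\sqrt t$. So the variance decreases from coarse to fine spatial scales. For such inhomogeneous branching structures the first moment overshoots (heuristically its threshold here is of order $\sqrt{\log N\,\log\log N}$), and the maximum is a barrier quantity. In particular, $\Pbb(\log Z_N(0)>(2\sqrt2+\varepsilon)\sqrt{\log N})$ should be far larger than $N^{-1}$; a Gaussian with the true variance would give $N^{-O(1/\log\log N)}$. No moment bound can rescue the union bound.

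The moment estimate you want is itself false. For fixed $q\ge 2$, H\"older and $\E Z_N=1$ give $\E[Z_N^q]\ge(\E[Z_N^2])^{q-1}\to\infty$. For $q\asymp\sqrt{\log N}$, moments of $Z_T$ (or of ratios ending at time $T$) are only controlled under $\binom q2\lesssim\log(N/T)$. This is the admissibility condition of Lemma~\ref{lem:moment-all-scales}, and it fails for $T$ close to $N$.

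What is missing is what the paper does:
\begin{itemize}
\item a multiscale barrier argument with scale-dependent admissible moments for $Z_{N^{1-\varepsilon}}$ (Lemma~\ref{lem:CNZ-upper-tail}), quantified so that the bound $N^{-c\sqrt\varepsilon}$ per box of side $\sqrt{N^{1-\varepsilon}}$ beats the $N^{\varepsilon}$ boxes;
\item a bound uniform in $x\in{\rm B}(\sqrt N)$ showing $\log Z_N(x)-\log Z_{t_{K_0}}(x)\le\delta\sqrt{\log N}$ (Proposition~\ref{prop:reductionToSubcritical}). This is obtained by chaining over the scales $t_k$ with admissible moments $q_k\asymp\sqrt{2^{-k}\log N/k}$, spatial-increment bounds from the swapping argument, and negative moments.
\end{itemize}

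\textbf{The lower bound has the same problem in its input.} Your gluing $Z_N(x)=Z_M(x)\widehat Z(x)$ is exactly the paper's step in Section~\ref{sec-LB}. What is needed there is a lower tail for $\widehat Z$ only at a scale $o(\sqrt{\log N})$, e.g.\ $(\log\log N)^2$, which Proposition~\ref{prop:iterated-lower-tail} provides, together with an $\F_M$-measurable choice of point.

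The problem is the one-point input. $e^{-u^2/8}$ is not the one-point tail of $\log Z_M(0)$ for any $M$. It is the cost for a single point to follow the optimal multiscale path, and realizing it uses the branching entropy inside each box of side $\sqrt M$. You discard that entropy by keeping one point per box.
\begin{itemize}
\item For small $\delta$: Lemma~\ref{lem:moment-all-scales} with $T=N^{1-\delta}$, $q\approx\sqrt{2\delta\log N}$ and Markov give a one-point probability at most $N^{-3\sqrt\delta+C\delta\log(4/\delta)}\ll N^{-\delta}$.
\item For $\delta$ close to $1$: $\log Z_M(0)$ has variance only $\approx\log(1/\delta)$. The subcritical moment bound used in the proof of that lemma, with $q$ slightly below $\sqrt{2\log N}$, gives roughly $N^{-4}$.
\end{itemize}
So $N^{\delta}$ independent trials do not suffice at either end, and heuristically not in between.

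The repair is to use a lower bound on the maximum itself at scale $N'=N^{1-\varepsilon}$. The multiscale second-moment argument of \cite{CNZ-max} gives $\max_{x\in{\rm B}(\sqrt{N'})}\log Z_{N'}(x)\ge 2\sqrt2(1-\sqrt\varepsilon-o(1))\sqrt{\log N}$. Your $\widehat Z$ step then finishes the proof as in the paper.
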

\subsection{Motivation and previous results}
This work is part of a long sequence of works elucidating the structure of the polymer measure and partition function
in the critical dimension $d=2$. In that dimension, replacing $\beta_N$ by $\beta>0$ independent of $N$  leads to $Z_N^\beta(x)\to 0$ almost surely.
The particular rescaling $\beta_N=\hat\beta/\sqrt{R_N}$ with $\hat\beta\in (0,1)$
was discovered in the context of the stochastic heat equation (SHE)  by Bertini and Cancrini \cite{Bertini98} and was later generalized by Caravenna, Sun, and Zygouras \cite{CaraSuZy-universalityrelev}, in both the SHE and polymer setups, to a wider range of parameters for which a phase transition occurs at $\hat\beta=1$. See also \cite{CaSuZy18,ChDu18,Gu18KPZ2D,NaNa21}. In particular, it follows from these works that for $\hat\beta<1$, the variable $W_N(x):=\log Z_N^{\hat \beta/\sqrt{R_N}}(x)$ converges in distribution to a normal variable of mean
$- {\lambda^2}/ 2$ and variance $\lambda^2$, where
$\lambda^2=\lambda^2(\hat\beta)=-\log (1-{\hat \beta}^2)$. We refer to this regime as the \textit{subcritical} regime.
Still in the subcritical regime and concerning the  spatial behavior of $W_N(x)$,
it was shown in  \cite{CaSuZy18}, that the process
$$G_N(x):=\sqrt{\log N}(W_N(\lfloor x\sqrt{N}\rfloor) -\E W_N(0)), \quad x\in [0,1]^2$$
 converges in law, when tested against smooth and compactly supported functions, to a multiple of a
 log-correlated Gaussian field on $\mathbb R^2$. 
For a related result in the KPZ/SHE setup, see \cite{CaSuZy18,Gu18KPZ2D,NaNa21}.

Given the convergence to a logarithmically correlated field, a natural question is to study the extremes of the field $G_N(x)$. 
We undertook this task in \cite{CNZ-max}, and showed that the maximum, to leading order, can be computed by 
a toy model consisting of a branching random walk with generation-dependent variance. The proof in \cite{CNZ-max} is based 
on high moment computations of  \cite{CN-moments}, which in turn follow the earlier work \cite{cosco2021MomentsUB, cosco2023momentsLB, LiZy}. See also \cite{CCR25}.

Moving to the critical regime, that is $\beta_N$ with the scaling \eqref{eq-091026a}, an important observation of \cite{CaSuZyCrit21}
is that the field $Z_N^\beta(\lfloor x\sqrt{N}\rfloor)$ itself (i.e., with no additional rescaling), diverges pointwise, but converges as a distribution to a \textit{log-log} correlated field. The associated partition function process gives rise to the critical 2d stochastic heat flow, which is not a 
Gaussian multiplicative chaos  \cite{CaSuZyCrit23} and seems of a universality class of its own. This has led to a burst of activity,
see \cite{ZygourasReview,Liu-Zygouras,ClarkMian} for some of the subsequent work.  It is  natural to ask about the maximum in \eqref{eq-091026d}, which is what this paper treats.

\subsection{Notation}

For real $t\geq0$, set
\[
e_t:=\exp\left(\sum_{n=1}^{\lfloor t\rfloor}
\left(\beta_N\omega(n,S_n)-\frac{\beta_N^2}{2}\right)\right),
\qquad
Z_t(x):={\mathrm E} _x[e_t],
\]
and
\[
Z_t(0,y):={\mathrm E} _0[e_t\mathbf 1_{\{S_{\lfloor t\rfloor}=y\}}],\;
Z_t:=Z_t(0)=\sum_{y\in\mathbb Z^2}Z_t(0,y),\;
 {{\mbox{\rm with} \ \mu_t(y):=\frac{Z_t(0,y)}{Z_t(0)}.}}
\]
{Note that these expressions depend on $N$  (and $\vartheta$) through $\beta_N$ but we typically omit both from the notation.} {If we want to emphasize the $\beta$-dependence of $Z_t$, we write $Z_t[\beta_N]$, not to be confused with $Z_t(x)$.}
Set $t_0=0$ and
\begin{equation}
\label{eq-111026e}
{K_N:=\lfloor (\log 2)^{-1}\log \log N \rfloor,\quad t_k:=N^{1-2^{-k}},\quad  \mbox{\rm for}\  1\leq k< K_N, \; \; t_{K_N} := N.}
\end{equation}
The choice of $K_N$ gives $e^{-4}N<t_{K_N -1}\leq e^{-2}N$. For $1\leq k<K_N-1$,
\[
t_{k+1}
/t_k = N^{-2^{-(k+1)}+2^{-k}} = N^{2^{-k-1}}.
\]
Set $\sigma_N^2:=e^{\beta_N^2}-1$.

We write
\[\mu_t^{\otimes 2}(x,y):=\mu_t(x) \mu_t(y) := \frac{Z_t(0,x)Z_t(0,y)}{Z_t(0)^2}.
\]
\subsection{Structure of the paper} We begin by explaining the upper bound proof. The main idea is to write
\begin{equation}
\label{eq-20102026a}
 Z_N(x)=\frac{Z_N(x)}{Z_{N^{1-\varepsilon}}(x)} \cdot Z_{N^{1-\varepsilon}}(x).\end{equation}
By a simple change of coordinates detailed in Section \ref{subsec: subcritical}, the process 
$$\log Z_{N^{1-\varepsilon}}(x)=\log Z_{N^{1-\varepsilon}}^{\beta_N}(\lfloor x\sqrt{N}\rfloor), \quad \lfloor x\sqrt{N}\rfloor\in B(\sqrt{N^{1-\varepsilon}}),$$
is a version of the subcritical polymer studied in \cite{CNZ-max}, and its maximum in that box is to leading order 
$2\sqrt{2} \sqrt{\log N^{1-\varepsilon}}$. Since $B(\sqrt{N})$ can be tiled with a constant multiple of $N^\varepsilon$ such boxes, a slightly more quantitative version of the results in \cite{CNZ-max}, see 
Lemma \ref{lem:CNZ-upper-tail}, allows us to show that with probability tending to $1$ as $N\to\infty$, for any $\eta>0$ and
$\varepsilon$ small enough,
$$\max_{z\in B(\sqrt{N})} \log Z_{N^{1-\varepsilon}}(z)\leq (2\sqrt{2}+\eta)\sqrt{\log N},$$
see Proposition \ref{prop:Subcritical}. 

The problem then reduces for obtaining an upper  bound on the maximum of $\log Z_N(x)-\log Z_{N^{1-\varepsilon}}(x)$ over $x\in (0,1)$,
which is the technically most involved part of the argument. To do that, we decompose 
\[\frac{Z_N(x)}{Z_{N^{1-\varepsilon}}(x)} =\prod_{k=K_0}^N \frac{Z_{t_{k+1}}(x)}{Z_{t_k(x)}},\]
where $K_0=K_0(\varepsilon)$ is large enough.
For a single point $x$, a good control on the lower tail of $\log Z_N(x)-\log Z_{N^{1-\varepsilon}}(x)$  can be obtained by known results, see Lemma \ref{lem: Markov for ZN}. To obtain the maximum over the whole box, we use a chaining argument that involves  controlling differences of $Z_{t_{k+1}}(x)/Z_{t_k}(x)$ at different points $x$. This in turns involves negative moments of $Z_{t_k}(x)$ 
 (see Proposition  \ref{prop:iterated-lower-tail} and also \cite{LX-moments}) and a swapping argument  for controlling moments 
of $Z_t(x)Z_s(y)-Z_s(x)Z_t(y)$, see Proposition \ref{prop:reductionToSubcritical} (proved in Section \ref{sec: difference bound})
and the swapping lemma (Lemma \ref{lem: swapping lemma})
in Appendix \ref{sec-swap}.

The proof of the lower bound is simpler: by the change of coordinates discussed above, we can find a point $x_*\in B(\sqrt{N})$
so that, for any fixed $\eta>0$ and $\varepsilon$ small enough, $\log Z_{N^{1-\varepsilon}}(x_*)>(2\sqrt{2}-\eta)\sqrt{\log N}$, and further this $x^*$ is measurable on $\{\omega(n,x), n\leq N^{1-\varepsilon}, x\in \mathbb Z^2\}$. We can then use our negative moment bounds 
and the Markov property to control the left tail of $\log Z_N(x_*)-\log Z_{N^{1-\varepsilon}}(x_*)$. Together, these completes the proof. Details are in Section \ref{sec-LB}.

\subsection{Conventions}
We use symbols $c,C$ for small and large constants that may change from line to  line. If they depend on parameters, we may explicitly write those. We use the Landau notation: if quantities $A_N,B_N$ satisfy that $\limsup_{N\to\infty} |A_N/B_N|<\infty$, we write $A_N=O(B_N)$. If the limsup vanishes, we write $A_N=o(B_N)$. If the parameter on which the quantities depend is not $N$, but e.g. 
$\varepsilon$, we similarly write $A_\varepsilon=O(B_\varepsilon)$. We recall that ${\rm B}(r):=[-r,r]^2\cap \Zbb^2$ and, 
for $z\in\mathbb Z^2$,  ${\rm B}(z,r):=z+{\rm B}(r)$. For points $z=(z_1,z_2)\in \mathbb{Z}^2$, we write $\lfloor z\rfloor=
(\lfloor z_1\rfloor, \lfloor z_2\rfloor)$.
\section{Notation and Preliminaries}
\label{sec-NotPre}
We collect in this section  preliminary estimates.

\subsection{Positive moment bounds}
We record moment estimates for the ratio of partition functions from  previous work. {We rely heavily on \cite{CN-moments}, whose terminology we adopt.}
We will often use the following change of variables in time and inverse temperature:   for \(t\le N\), we define
\begin{equation}\label{eq: transformation of Zt}
  \widetilde{\beta}_t^2 := \frac{R_t}{R_N}(1-\vartheta/\log N).
\end{equation}
so that $\widetilde{\beta}_t^2/R_t = \beta_N^2$.
\corO{With this notation,
\begin{equation}
\label{eq-301026a}{Z_t[\beta_N]=Z_t[\widetilde{\beta}_t/\sqrt{R_t}]}.
\end{equation} 
In particular, if $t\ll N^{1-\varepsilon}$ for some $\varepsilon>0$ then $\widetilde{\beta}_t<1-\varepsilon/4$ and 
one is in the subcritical regime we introduced above  (with $t$ replacing $N$). Similarly, if $t=t_N$ is such that  $\log t/\log N\to 1$ but $t\ll N$ then one is in the subcritical regime (again, replacing $t$ by $N$ and using $\widetilde \beta_t$).}

\corO{In any case, we can apply \cite[Proposition~A.2]{CSZ-Dickman} and \cite[Proposition~A.2~(i)]{CN-moments} to obtain that}
 there exists $C>0$ such that
\begin{equation} \label{eq:2ndMomentBoundIntroduction}
\mathbb{E} [Z_{t_k}^2]\leq C2^k,\quad \forall k<K_N,
\end{equation}
Ratios can also be controlled, as follows.
\begin{lemma}\label{lem: Markov for ZN}
Let $p\geq 1$. For non-negative integers $s,r$, 
$$ \mathbb{E}  \left[  \left(\frac{Z_{s+r}}{Z_{r}}\right)^p \right] \leq \mathbb{E}  \left[  Z_{s}^p \right],\quad \mathbb{E} [Z_{r+s}^p] \leq  \mathbb{E} [Z_r^p]\mathbb{E} [Z_s^p].$$
\end{lemma}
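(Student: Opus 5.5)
The plan is to use the Markov property of the partition function at time $r$, followed by Jensen's inequality with respect to the polymer endpoint measure $\mu_r$. Write $\mathcal F_r$ for the $\sigma$-field generated by $\{\omega(n,x): n\le r, x\in\mathbb Z^2\}$.

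Splitting the exponential at time $r$ and conditioning on the walk's position at that time gives the decomposition
\[
Z_{s+r}=\sum_{y\in\mathbb Z^2} Z_r(0,y)\, Z^{(r)}_s(y),
\]
where $Z^{(r)}_s(y)$ is the partition function of length $s$ started at $y$ in the time-shifted environment $\omega(r+\cdot,\cdot)$. Each $Z^{(r)}_s(y)$ is independent of $\mathcal F_r$. Moreover, by time-shift invariance of the environment and translation invariance of the walk, each has the law of $Z_s$.

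Dividing by $Z_r$ yields
\[
\frac{Z_{s+r}}{Z_r}=\sum_y \mu_r(y)\, Z^{(r)}_s(y).
\]
This is a convex combination, since $\mu_r$ is a probability measure on $\mathbb Z^2$. Because $p\ge1$, the map $u\mapsto u^p$ is convex, so Jensen gives
\[
\Bigl(\frac{Z_{s+r}}{Z_r}\Bigr)^p\le \sum_y \mu_r(y)\,\bigl(Z^{(r)}_s(y)\bigr)^p .
\]
We then take the conditional expectation given $\mathcal F_r$. The weights $\mu_r(y)$ are $\mathcal F_r$-measurable, and $Z^{(r)}_s(y)$ is independent of $\mathcal F_r$. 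Hence
\[
\mathbb E\Bigl[\Bigl(\frac{Z_{s+r}}{Z_r}\Bigr)^p\Bigm|\mathcal F_r\Bigr]\le \sum_y\mu_r(y)\,\mathbb E[Z_s^p]=\mathbb E[Z_s^p].
\]
Taking expectations proves the first inequality.

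For the second inequality, write $Z_{r+s}^p=Z_r^p\,(Z_{r+s}/Z_r)^p$ and condition on $\mathcal F_r$. Since $Z_r^p$ is $\mathcal F_r$-measurable, the conditional bound above gives
\[
\mathbb E[Z_{r+s}^p\mid\mathcal F_r]\le Z_r^p\,\mathbb E[Z_s^p].
\]
Taking expectations yields $\mathbb E[Z_{r+s}^p]\le \mathbb E[Z_r^p]\,\mathbb E[Z_s^p]$.

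No step here is a real obstacle. The one point needing care is the bookkeeping of the Markov decomposition: the second-stage factor must use times $r+1,\dots,r+s$ and the same centering $-\beta_N^2/2$, so that it is exactly distributed as $Z_s$ started from $y$. The case $Z_r>0$ needs no separate treatment, since $Z_r>0$ almost surely (it is an average of exponentials). Measurability and integrability are fine because all terms are nonnegative, so Tonelli justifies exchanging the sum with the conditional expectation.
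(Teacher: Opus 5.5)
Your proposal is correct and follows essentially the same route as the paper: the Markov decomposition $Z_{s+r}/Z_r=\sum_y\mu_r(y)\,Z^{(r)}_s(y)$, Jensen with respect to $\mu_r$, and independence plus shift invariance to get the bound $\mathbb E[Z_s^p]$ conditionally on $\mathcal F_r$. The submultiplicativity then follows from the same conditional bound, exactly as you describe.
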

\begin{proof}
Define the filtration $\F_m:=\sigma(\omega(j,z):1\le j\le m,\ z\in\Zbb^2)$.     Write $Z_{r,s+r}((r,z))$ for the partition function over
$(r,s+r]$ started from $z$ at time $r$.  By the Markov property,
$$
\frac{Z_{s+r}}{Z_{r}}
=\sum_{z\in\mathbb Z^2}\mu_{r}(z)Z_{r,s+r}((r,z)).
$$
Conditional Jensen's inequality and translation invariance give
\begin{equation}
\label{eq-111026a}
\mathbb{E} \left[
\left.\left(\frac{Z_{s+r}}{Z_{r}}\right)^p
\right|\mathcal F_{r}
\right]
\leq\sum_z\mu_{r}(z)\,
\mathbb{E} [Z_{r,s+r}((r,z))^p]
\leq \mathbb{E} [Z_{s}^p].
\end{equation}
Taking expectations yields
\begin{equation}\label{eq:ratio-by-Ap}
\mathbb{E} \left[
\left(\frac{Z_{s+r}}{Z_{r}}\right)^p
\right]\leq \mathbb{E} [Z_{s}^p].
\end{equation}
{Further,
 for nonnegative integers $r,s$, we have} 
$$
\mathbb{E} [Z_{r+s}^p] =\mathbb{E} [Z_{r}^p \mathbb{E} [(Z_{r+s}/Z_r)^p|\mathcal F_r]] \leq  \mathbb{E} [Z_r^p]\mathbb{E} [Z_s^p],
$$
{where the  inequality follows from \eqref{eq-111026a}.}
\end{proof}
\begin{lemma}
\label{lem:moment-all-scales}
Let $T_N$ and $M_N$ be sequences of positive integers such that $\sqrt N\leq T_N\leq N$, $M_N\leq \log N$ and  $NM_N/T_N$ diverges as $N\to \infty$. For any $\delta>0$, there exists a constant $C=C(\delta)>0$ such that the following holds:  if integers $\sqrt{N}\leq T\leq T_N$ and
$p\geq 2$  satisfy   
\begin{equation}\label{eq:admissible-p-M}
    \binom{p}{2} \leq  (1-\delta) (\log (N/T)+\log M_N),
\end{equation}
 then
\begin{equation}\label{eq:block-moment-ratio}
\mathbb{E} \left[
Z_T^p
\right]
\leq  \exp\left(
CM_N\binom p2\log\left(\frac{4\log N}{\log (N/T)+\log M_N}\right)
\right).
\end{equation}
\end{lemma}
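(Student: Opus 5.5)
The plan is to cut the interval $[0,T]$ into $M_N$ consecutive blocks of length $s:=\lceil T/M_N\rceil$ (the last block possibly shorter), and then iterate the submultiplicativity in Lemma~\ref{lem: Markov for ZN}. This gives
\[
\mathbb{E}[Z_T^p]\le \bigl(\mathbb{E}[Z_s^p]\bigr)^{M_N},
\]
where, since $\mathbb{E}[Z_r^p]$ is nondecreasing in $r$ (by conditional Jensen and the martingale property), we may use blocks of length $s$ throughout. It then suffices to prove the single-block estimate
\begin{equation*}
\mathbb{E}[Z_s^p]\le \exp\Bigl(C\binom p2\log\Bigl(\frac{4\log N}{\log(N/T)+\log M_N}\Bigr)\Bigr),
\end{equation*}
and raise it to the power $M_N$. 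Since $T\ge\sqrt N$ and $M_N\le\log N$, the block length satisfies $s\ge \sqrt N/\log N$. Moreover, $N/s\approx NM_N/T$, so
\[
\log(N/s)=\log(N/T)+\log M_N+O(1),
\]
and this quantity diverges by assumption. In the notation \eqref{eq: transformation of Zt}, $Z_s[\beta_N]=Z_s[\widetilde\beta_s/\sqrt{R_s}]$, and we have
\[
1-\widetilde\beta_s^2=1-\frac{R_s}{R_N}\bigl(1-\vartheta/\log N\bigr)=\frac{\log(N/s)+O(1)}{\log N}.
\]
So each block is a subcritical polymer of length $s$ with effective parameter $1-\widetilde\beta_s^2\asymp (\log(N/T)+\log M_N)/\log N$.

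For the single block, I would invoke the sharp high-moment bound from \cite{CN-moments} (Proposition~A.2 and its companions), which is already used for \eqref{eq:2ndMomentBoundIntroduction}. In the subcritical regime it states that, for $\binom p2\,\widetilde\beta_s^2<1-\delta'$ in the appropriate sense, the $p$-th moment is governed by pairwise interactions:
\[
\mathbb{E}[Z_s^p]\le \exp\Bigl(\binom p2\bigl(\lambda^2(\widetilde\beta_s)+o(1)\bigr)\Bigr)\quad\text{or}\quad \le C(\delta)\,\mathbb{E}[Z_s^2]^{C\binom p2},
\]
with $\lambda^2(\widetilde\beta)=-\log(1-\widetilde\beta^2)$. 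Plugging in the value of $1-\widetilde\beta_s^2$ gives
\[
\mathbb{E}[Z_s^2]\approx \frac{1}{1-\widetilde\beta_s^2}\approx \frac{\log N}{\log(N/T)+\log M_N}.
\]
Hence $\log\mathbb{E}[Z_s^p]\le C\binom p2\log\bigl(4\log N/(\log(N/T)+\log M_N)\bigr)$. The factor $4$ absorbs the $O(1)$ errors and keeps the logarithm bounded below by a positive constant, so that the $C(\delta)$ prefactor can be absorbed into the exponent.

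The main obstacle is checking that the admissibility condition \eqref{eq:admissible-p-M} is exactly what the moment bound of \cite{CN-moments} needs in this near-critical situation. The required condition is roughly
\[
\binom p2\bigl(1-\widetilde\beta_s^2\bigr)^{-1}\cdot\frac{1}{\log s}\lesssim 1,
\]
i.e.\ $\binom p2\le(1-\delta)\log(N/s)$, and this uniformity in $p$ growing with $N$ has to be confirmed. If the cited result is only stated for fixed $\widetilde\beta<1$, I would rerun its chaos-expansion/Gaussian-comparison argument: bound the $p$-replica collision local time by $\binom p2$ independent pairwise collision local times via a Hölder/swap argument, and use the geometric bound
\[
\mathrm E\bigl[e^{\beta_N^2 L_s}\bigr]\le \bigl(1-\beta_N^2R_s\bigr)^{-1}
\]
for each pair, with slack $\delta$ to control the multi-pair interactions.
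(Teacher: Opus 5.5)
Your proposal is correct and is essentially the paper's proof. Lemma~\ref{lem: Markov for ZN} (submultiplicativity and monotonicity in time) gives $\mathbb{E}[Z_T^p]\le(\mathbb{E}[Z_n^p])^{M_N}$ with $n=\lceil T/M_N\rceil$ (your $s$), and the paper then bounds the single block by \cite[Theorem~2.7]{CN-moments} as $\mathbb{E}[Z_n^p]\le Cp^C b_n^{C\binom p2}$, where $b_n=(1-\frac{\sigma_N^2}{\pi}\log n)^{-1}=\frac{\log N}{\log(N/T)+\log M_N+O(1)}$; the prefactor is absorbed because the final logarithm is at least $\log 2$.

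That theorem already covers the quasicritical regime $(1-\sigma_N^2R_n)\log n\to\infty$ with $p$ growing, under the hypothesis $\binom p2 b_n\sigma_N^2/\pi\le 1-\delta/2$. Your (correct) condition $\binom p2\le(1-\delta)\log(N/n)$ implies this for large $N$, so no fallback is needed. In fact, a plain H\"older decoupling into $\binom p2$ pairs, which needs $\binom p2\widetilde\beta_s^2<1$ (the hypothesis you first attribute to \cite{CN-moments}), would fail for every $p\ge3$, since $\widetilde\beta_s^2\ge\frac12-o(1)$ here. Your intermediate ``roughly'' display is also missing a factor $\widetilde\beta_s^2$.
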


\begin{proof}
We write $M=M_N$ for brevity. 
 {For $p=2$, the claim follows from \eqref{eq:2ndMomentBoundIntroduction}  and the second part of Lemma \ref{lem: Markov for ZN}.  Therefore in what follows we consider $p\geq 3$ integer, and again
appeal to Lemma~\ref{lem: Markov for ZN}.} Splitting the interval of length $T$ into $M$ intervals of length
at most $\lceil T/M\rceil$, applying the {second part of the lemma} $M$ times iteratively, {and using that the lemma implies also  that} the map $t\mapsto \mathbb{E} [Z_t^p]$ is non-decreasing, we obtain
\begin{align}\label{eq: submodularity}
    \mathbb{E} [Z_{T}^p]
\leq \left(\mathbb{E} [Z_{\lceil T/M\rceil}^p]\right)^{M}.
\end{align}

It remains to estimate $\mathbb{E} [Z_{\lceil T/M\rceil}^p]$. Put
$n:=\lceil T/M\rceil$. Since $T/M$ is sufficiently large due to $T\geq \sqrt N$ and $M\leq \log N$,
$$
\log n=\log\left\lceil\frac{T}{M}\right\rceil
=\log N-\log (N/T)-\log M+O(1).
$$
Recall that $\sigma_N^2=e^{\beta_N^2}-1 = \pi/\log N + O(1/(\log N)^2)$. We note that
$$
b_n:= \left(1-\frac{\sigma_N^2}{\pi}\log n\right)^{-1}
=\frac{\log N}{\log (N/T)+\log M+O(1)}.
$$
Since $\log (N/T) + \log M$ diverges to $\infty$ uniformly in $T\leq T_N $ by the assumption $N M_N/T_N \to \infty$, we have
$$
b_n\frac{\sigma_N^2}{\pi}= \frac{1+o(1)}{\log (N/T)+\log M},
$$
 uniformly in  $\sqrt{N}\leq T\leq T_N $.
Hence, the  condition  
\eqref{eq:admissible-p-M} implies that for $N$ sufficiently large, 
\begin{align}\label{eq: cond for bn}
    \binom p2 b_n\frac{\sigma_N^2}{\pi}\leq (1-\delta/2).
\end{align}

Since $1\ll n = \lceil T/M\rceil \ll N$ uniformly in  $\sqrt N\leq T< T_N$ due to $M\leq \log N$, and $N M_N/T_N\to \infty$,  we have
$$
\left(1-\sigma_N^2 R_n\right)\log n
=\log n -\frac{(\log n)^2}{\log N}+O(1),
$$
which diverges uniformly in $\sqrt{N}\leq T\leq T_N $ as $N\to \infty$. Hence, $\beta_N$ is in the subcritical  or quasicritical  regime at time $n$  (see the remark after \eqref{eq: transformation of Zt}). \corO{Using the same reparametrization of $\beta_N$ as discussed after \eqref{eq-301026a}, with $t$ replaced by $n$, we have from  \cite[Theorem~2.7]{CN-moments} with $s=1$ and $\alpha=1-\delta/2$, 
that}
$$
\mathbb{E} [Z_n^p]
\leq Cp^C b_n^{C\binom p2}
\leq\exp\left(C\binom p2(1+\log b_n)\right). 
$$
By 
$
\log (N/T)+\log M\leq 2\log N
$  due to $M\leq \log N$,
we have, for $N$ sufficiently large,
$$
1+\log b_n
\leq C\log\left(\frac{4\log N}{\log (N/T)+\log M}\right).
$$
Together with \eqref{eq: submodularity}, this proves \eqref{eq:block-moment-ratio}.
\end{proof}
The following bound is not optimal but useful for later arguments. Note that a sharper bound is obtained in \cite{Liu-Zygouras} for the continuum model. 
\begin{corollary}\label{cor:positive-moment-bound}
For every  $p\geq 2$, there exists $C=C(p)>0$ such that, for all
sufficiently large $N$,
\[
\sup_{T\leq N}\mathbb{E} [Z_T^p]\leq \exp\left(C(\log\log N)^2\right).
\]
\end{corollary}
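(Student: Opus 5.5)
Since $t\mapsto \mathbb{E}[Z_t^p]$ is non-decreasing by Lemma \ref{lem: Markov for ZN}, it suffices to bound $\mathbb{E}[Z_N^p]$. Fix $p\geq 2$. The plan is to split $[0,N]$ into an initial stretch $[0,T]$, with $T$ chosen so that Lemma \ref{lem:moment-all-scales} applies, and a remaining stretch that is cut into pieces each short enough for the same lemma. The pieces are then recombined via the submultiplicativity $\mathbb{E}[Z_{r+s}^p]\le \mathbb{E}[Z_r^p]\mathbb{E}[Z_s^p]$ from Lemma \ref{lem: Markov for ZN}.

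\textbf{Choice of parameters.} I would take $M_N:=\lfloor \log N\rfloor$ and $T_N:=N$. Then $NM_N/T_N=M_N\to\infty$, $M_N\le \log N$, and $\sqrt N\le T_N\le N$, so Lemma \ref{lem:moment-all-scales} applies for every $\sqrt N\le T\le N$. For $T=N$ the admissibility condition \eqref{eq:admissible-p-M} reads
\[
\binom p2\le (1-\delta)\log M_N=(1-\delta)\log\log N(1+o(1)).
\]
With $p$ fixed and $\delta=1/2$, this holds for $N$ large. The bound \eqref{eq:block-moment-ratio} then gives
\[
\mathbb{E}[Z_N^p]\le \exp\Big(CM_N\binom p2\log\Big(\frac{4\log N}{\log\log N}\Big)\Big)\le \exp\big(C(p)\log N\,\log\log N\big).
\]
This is too large: a factor $\log N$ appears where we need $\log\log N$. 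So $M_N=\log N$ is the wrong choice, and the main work is to pick $M_N$ more cleverly.

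\textbf{Optimizing $M_N$.} The condition \eqref{eq:admissible-p-M} at $T=N$ only requires $\log M_N\ge \binom p2/(1-\delta)$, that is, $M_N\ge e^{2\binom p2}$, which is a constant. Taking $M_N$ equal to a large constant $M(p)$ would violate the hypothesis that $NM_N/T_N$ diverges when $T_N=N$. To keep that divergence I would take $M_N=\lceil\log\log N\rceil$. Then $\log M_N\sim\log\log\log N\to\infty$, so \eqref{eq:admissible-p-M} holds for large $N$, and uniformly in $\sqrt N\le T\le N$ since $\log(N/T)\ge0$. The lemma yields
\[
\mathbb{E}[Z_T^p]\le \exp\Big(C\log\log N\binom p2\log\Big(\frac{4\log N}{\log\log\log N}\Big)\Big)\le \exp\big(C(p)(\log\log N)^2\big),
\]
which is exactly the claimed bound for $\sqrt N\le T\le N$.

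\textbf{Small $T$ and the main obstacle.} For $T<\sqrt N$, monotonicity gives $\mathbb{E}[Z_T^p]\le \mathbb{E}[Z_{\lceil\sqrt N\rceil}^p]$, which is covered by the previous step. The main point to verify is that the proof of Lemma \ref{lem:moment-all-scales} really only needs $NM_N/T_N\to\infty$, used through the divergence of $\log(N/T)+\log M_N$, with $M_N$ allowed to grow as slowly as $\log\log N$. One also needs $n=\lceil T/M_N\rceil\ll N$ uniformly, which holds because $M_N\to\infty$. If that uniformity turned out to be delicate near $T=N$, I would instead take $T_N=N/\log\log N$, apply the lemma to $Z_{T_N}$, and cover the last stretch of length $N-T_N$ with a bounded number of blocks (about $\log\log N$ of them, each of length $\le T_N$) via submultiplicativity. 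Each block contributes at most $\exp(C(p)\log\log N\,\log\log N)$ in the exponent sum, so this route gives at worst $\exp(C(\log\log N)^3)$. To bring it back to $(\log\log N)^2$, I would handle the last stretch with a single block, since $N-T_N\le N$ is itself covered by the monotone bound at $T_N$-type scales.
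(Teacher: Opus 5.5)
Your proof is correct and is essentially the paper's: you reduce to $T=N$ by the monotonicity from Lemma~\ref{lem: Markov for ZN}, then apply Lemma~\ref{lem:moment-all-scales} with $T_N=N$ and $M_N=\lceil\log\log N\rceil$, which yields $\exp(C(p)(\log\log N)^2)$. You should add the one-line Jensen reduction to integer $p$ (the lemma is proved for integer $p$); the fallback in your last paragraph is unnecessary, and its final step would not work as written, since $N-T_N$ is much larger than $T_N$.
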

\begin{proof}
{By Jensen's inequality, it suffices to prove the result for $p$ integer.}
{Further, it suffices to remove the supremum and only consider  $T=N$ because, by Lemma  \ref{lem: Markov for ZN},}
the moments increase in   $T$. Set
$M:=M_N := \lceil\log\log N\rceil$. 
Then,  the  condition  
\eqref{eq:admissible-p-M} is satisfied because $p$ is fixed and $M$ goes to infinity. 
 By \eqref{eq:block-moment-ratio}, we have 
 
$$
\mathbb{E} [Z_N^p]
\leq\exp(C(\log\log N)^2),
$$
with some $C=C(p)$. 
\end{proof}
\subsection{Lower tail concentration and negative moments}
\label{sec-negmom}

The following proposition was proved in \cite{LX-moments}. For completeness, we give a proof using Chen’s convexity theorem \cite[Theorem 1]{Chen}. 
\begin{proposition}\label{prop:iterated-lower-tail}
Let $T\in\mathbb N$ and let $U>1$ be deterministic. Assume that
\begin{equation}\label{eq:second-moment}
 \mathbb{E} [Z_T^2]\le U.
\end{equation}
Then, for any $p>0$, 
\begin{equation}\label{eq:negative-moments}
 \mathbb{E}  Z_T^{-p}
 \le\exp\left(\frac{p(p+1)}2 {\log U}\right).
\end{equation}
\end{proposition}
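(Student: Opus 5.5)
The target inequality has a suggestive shape. Set $\varphi(q):=\log \mathbb{E} [Z_T^q]$ for $q\in\Rbb$. Then \eqref{eq:negative-moments} reads
\[
\varphi(-p)\le \frac{(-p)(-p-1)}{2}\,\log U ,
\]
while the hypothesis \eqref{eq:second-moment} reads
\[
\varphi(2)\le \frac{2\cdot 1}{2}\log U .
\]
So the claim is exactly that $\varphi(q)\le \binom q2 \varphi(2)$ for $q=-p<0$, with $\binom q2:=q(q-1)/2$. Note that $\varphi(0)=\varphi(1)=0$, since $\mathbb{E} Z_T=1$. This suggests the following plan: show that
\[
g(q):=\frac{\varphi(q)}{q(q-1)}
\]
is nondecreasing on $(-\infty,0)\cup(1,\infty)$, or at least that $g(q)\le g(2)$ for all $q<0$. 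This gives $\varphi(-p)\le \frac{p(p+1)}{2}\varphi(2)\le \frac{p(p+1)}2\log U$.

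To obtain this comparison I would use the Gaussian structure of the environment through Chen's convexity theorem \cite[Theorem 1]{Chen}.

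\textbf{Step 1: interpolation.} Write $Z_T=F(\omega)$, where $F(\omega)=\mathrm E_0[\exp(\sum_n \beta_N\omega(n,S_n)-\beta_N^2/2)]$ is a positive, log-convex functional of the Gaussian vector $\omega$. Embed $\omega$ in the Ornstein--Uhlenbeck semigroup $(P_t)$. The quantities $\mathbb{E} [Z_T^q]$ and $\mathbb{E} [Z_T^2]=\mathbb{E} [F\,P_0F]$ are then endpoints of interpolations $t\mapsto \mathbb{E} [(P_tF^{a})^{b}]$.

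\textbf{Step 2: apply Chen's theorem.} The theorem gives convexity of $t\mapsto\log\|P_tF\|$-type quantities along the semigroup, in the spirit of log-convexity in the correlation parameter. I would use it to deduce that $q\mapsto \varphi(q)$ satisfies the required convexity comparison relative to the quadratic $\binom q2$.

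\textbf{Step 3: extrapolate from $q=2$ to $q=-p$.} Concretely, I would aim for the following. Reparametrize by $q$ and write $\varphi(q)=\binom q2 h(q)$, where $h$ is the effective variance; for a pure log-normal variable $h$ is constant. Then the extrapolation from $q=2$ to $q=-p$ follows from monotonicity of $h$. Here the values $\varphi(0)=\varphi(1)=0$ serve as anchors and fix the two free constants of the convex comparison.

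\textbf{Main obstacle.} I expect the hardest step to be identifying the precise monotone or convex quantity furnished by Chen's theorem and checking that it applies to $F^q$ with $q$ negative. Negative powers are not covered by hypercontractivity, and integrability of $Z_T^{-p}$ has to be justified first. For $T$ finite this holds because $Z_T\ge \mathrm P_0(\text{a fixed path})\,e^{\beta_N\sum\omega-\ldots}$, which is log-normal.

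\textbf{Fallback.} If the direct convexity route fails, I would use a more classical argument. Paley--Zygmund gives $\mathbb{P} (Z_T\ge 1/2)\ge 1/(4U)$. Gaussian concentration for the convex Lipschitz function $\log Z_T$ on the set where the overlap is controlled then gives a lower tail, which is integrated to bound $\mathbb{E} Z_T^{-p}$. This route only yields a bound of the form $\exp(C_p\log U)$ rather than the sharp constant $p(p+1)/2$. Such a bound would still suffice for the later applications.
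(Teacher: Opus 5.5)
Your reformulation is the right one. With $\varphi(q)=\log\mathbb{E}[Z_T^q]$ and $g(q)=\varphi(q)/(q(q-1))$, the claim follows from $g(-p)\le g(2)$ together with $\varphi(2)\le\log U$. But the proposal never proves this comparison, and the comparison is the whole content of the proposition. Step 3 renames $g$ as $h/2$ and assumes its monotonicity, so it is circular. Step 2 attributes to Chen's theorem a convexity statement along the Ornstein--Uhlenbeck semigroup, in the form of $t\mapsto\log\|P_tF\|$, and gives no route from it to information about $q\mapsto\varphi(q)$ at negative $q$.

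The fallback does not close the gap either. Convex-Lipschitz concentration needs a set $A\subset\{Z_T\ge 1/2\}$ with $\mathbb{P}(A)\gtrsim 1/U$ on which
\[
|\nabla\log Z_T|^2=\beta_N^2Z_T^{-2}\sum_{n\le T}\mathrm E_0^{\otimes 2}\bigl[e_T(S)e_T(\tilde S)\mathbf 1_{\{S_n=\tilde S_n\}}\bigr]\le K .
\]
Controlling this by Markov's inequality via $\mathbb{E}[Z_T^2|\nabla\log Z_T|^2]=\mathrm E_0^{\otimes2}[\beta_N^2L_Te^{\beta_N^2L_T}]\ge\beta_N^2R_T$, where $L_T$ is the collision local time, forces $K\gtrsim U$ in the relevant range $\beta_N^2R_T\asymp1$. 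The lower tail of $\log Z_T$ then lives on scale $\sqrt K$, and as sketched you get something like $\exp(Cp^2U)$, not $\exp(C_p\log U)$. That is useless when $U\asymp\log N$ or $U=\exp(C(\log\log N)^2)$, as in the later applications. Moreover, in the proof of Proposition~\ref{prop:reductionToSubcritical} the exponent grows with $N$, so the explicit form $\exp(Cp^2\log U)$ is genuinely needed.

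The missing idea is the exact statement of \cite[Theorem 1]{Chen}. If $\log F$ is a convex function of a finite Gaussian vector, then
\[
\Psi(q):=\frac1q\log\mathbb{E}[F^q]\quad(q\neq0),\qquad \Psi(0):=\mathbb{E}\log F,
\]
is convex on all of $\mathbb{R}$, negative $q$ included. Here $\omega\mapsto\log Z_T$ is convex by H\"older, as you observed, so no semigroup interpolation or hypercontractivity is needed. Since $\Psi(1)=\varphi(1)=0$, your $g$ is exactly the secant slope of $\Psi$ through the pivot $q=1$:
\[
g(q)=\frac{\varphi(q)}{q(q-1)}=\frac{\Psi(q)-\Psi(1)}{q-1}.
\]
Secant slopes of a convex function from a fixed point are nondecreasing. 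Hence $g(-p)\le g(2)=\tfrac12\varphi(2)\le\tfrac12\log U$, which is \eqref{eq:negative-moments}. This is the paper's two-line proof. Only the single anchor $\Psi(1)=0$ is used, and $\varphi(0)=0$ plays no role.
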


\begin{proof}
For fixed $T$, the partition function depends on finitely many environment
coordinates, and
$$
 \log Z_T
 =\log\left(4^{-T}\sum_{\gamma}
 \exp\left(\beta_N\sum_{n=1}^T\omega(n,\gamma_n)
 -\frac{T\beta_N^2}{2}\right)\right),
$$
where the sum runs over all paths $\gamma=(0,\gamma_1,\ldots,\gamma_T)$ of length $T$.
 It is well-known that $\omega\mapsto \log Z_T$ is convex due to H\"older's  inequality.

By Chen's Gaussian convexity theorem \cite[Theorem 1]{Chen}, the function
$$
 \Psi(q):=
 \begin{cases}
 q^{-1}\log\mathbb{E}  Z_T^q,&q\ne0,\\
 \mathbb{E} \log Z_T,&q=0,
 \end{cases}
$$
is convex on $\mathbb R$. By \eqref{eq:second-moment},
$$
 \Psi(1)=0,\qquad
 \Psi(2)=\frac12\log\mathbb{E}  Z_T^2\le\frac {\log U}2.
$$
Thus, for every $p>0$,
$$
 \frac{\Psi(1)-\Psi(-p)}{1+p}
 \le\Psi(2)-\Psi(1)\le\frac {{\log U}}2,
$$
and consequently we have  \eqref{eq:negative-moments}.
\end{proof}

\section{Proof of the upper bound in {Theorem \ref{thm:critical-maximum}}}
We prove the upper and lower bounds separately. In either case, the heart of the proof is to reduce attention from $Z_N$ to $Z_{N^{1-\varepsilon}}$, for which the methods of \cite{CNZ-max} apply. For the upper bound, the reduction requires certain
continuity estimates, which are developed in Section~\ref{sec: difference bound}, as well as the a priori positive and negative moment bounds from Section \ref{sec-NotPre}. The proof of the lower bound is much easier, and 
essentially combines the lower bound of \cite{CNZ-max} with lower tail estimates based on the estimates in Section 
\ref{sec-negmom}.

The rest of this section is devoted to the proof of the upper bound. The lower bound will be shown in Section \ref{sec-LB}.

\subsection{Proof of upper bound in Theorem~\ref{thm:critical-maximum}}
The upper bound follows from the next two propositions. The first proposition deals with  {the critical and quasicritical  regimes.
Its {proof} is provided   in the next subsection, and uses estimates from Section~\ref{sec: difference bound}. }
\begin{proposition} \label{prop:reductionToSubcritical}
For any $\delta>0$, if we take $K_0>1$  sufficiently large, then
\[
\limsup_{N\to\infty}\mathbb{P}\left(\exists x\in {\rm B}(\sqrt{N}), \log Z_N(x) - \log Z_{t_{K_0}}(x)>\delta \sqrt{\log N}\right) =0.
\]
\end{proposition}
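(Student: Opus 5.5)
We write $\log Z_N(x)-\log Z_{t_{K_0}}(x)=\sum_{k=K_0}^{K_N-1} D_k(x)$ with $D_k(x):=\log Z_{t_{k+1}}(x)-\log Z_{t_k}(x)$. The plan is to bound the maximum of each $D_k$ over ${\rm B}(\sqrt N)$ and sum. The threshold $\delta\sqrt{\log N}$ is split as $\sum_k a_k$ with $a_k:=c_\delta\,\theta^{K_N-k}\sqrt{\log N}$ for some $\theta<1$, or some similar summable profile. A crude choice such as $a_k=\delta\sqrt{\log N}/K_N$ is wasteful, since $K_N\sim\log\log N/\log 2$. The levels close to $N$ (large $k$) are the genuinely critical ones and get the bulk of the budget. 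The small $k\ge K_0$ levels are subcritical, with $\widetilde\beta_{t_k}^2\approx 1-2^{-k}$; they are handled by $K_0$ being large, which makes $\widetilde\beta$ close to $1$ uniformly in what follows.

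For a single point, Lemma~\ref{lem: Markov for ZN} with $p$ large gives $\mathbb E[(Z_{t_{k+1}}/Z_{t_k})^p]\le \mathbb E[Z_{t_{k+1}-t_k}^p]$. Lemma~\ref{lem:moment-all-scales} then applies with $T=t_{k+1}$ and $\log(N/T)=2^{-k-1}\log N$, and a suitable choice of $M_N$ lets one take $p$ of order $\sqrt{2^{-k}\log N}$. The resulting upper tail for $D_k(x)$ is roughly $\exp(-c\,a_k^2/\log(\ldots))$ at scale $a_k$. This is not enough to beat a union bound over the $N$ points of the box. However, $D_k$ should be essentially constant on boxes of side $\sqrt{t_k}$, since the increments over $(t_k,t_{k+1}]$ see a starting measure $\mu_{t_k}$ spread over $\sqrt{t_k}$. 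So only about $N/t_k=N^{2^{-k}}$ effectively distinct values need to be controlled. The single-point tail $\exp(-p\,a_k+C p^2\ldots)$ with $p\asymp\sqrt{2^{-k}\log N}$ and $a_k\gg 2^{-k/2}\sqrt{\log N}$ beats $N^{2^{-k}}$, and this fits in the geometric budget once $K_0$ is large.

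To make this rigorous, I would chain on a grid of mesh $\sqrt{t_k}$ and then dyadically refine down to the lattice. The continuity estimate needed is a moment bound on
\[
\frac{Z_{t_{k+1}}(x)}{Z_{t_k}(x)}-\frac{Z_{t_{k+1}}(y)}{Z_{t_k}(y)}=\frac{Z_{t_{k+1}}(x)Z_{t_k}(y)-Z_{t_k}(x)Z_{t_{k+1}}(y)}{Z_{t_k}(x)Z_{t_k}(y)}.
\]
By Hölder, the denominator is controlled by negative moments, using Proposition~\ref{prop:iterated-lower-tail} together with \eqref{eq:2ndMomentBoundIntroduction}, which give $\mathbb E Z_{t_k}^{-q}\le 2^{Ckq^2}$. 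The numerator is handled by the swapping argument: one writes $Z_t(x)Z_s(y)-Z_s(x)Z_t(y)$ via pairs of paths, exchanges the two paths after their first meeting, and bounds a $p$-th moment by something like $(|x-y|^2/t_k)^{c}$ times positive moments controlled by Corollary~\ref{cor:positive-moment-bound}. The latter losses are only $e^{C(\log\log N)^2}=N^{o(1)}$. Passing from the ratio difference to the log difference uses the lower bound on $Z_{t_{k+1}}/Z_{t_k}$, again via negative moments. Kolmogorov-type chaining then shows that the oscillation of $D_k$ within each $\sqrt{t_k}$-box is at most $a_k/2$ with probability $1-o(1/K_N)$.

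The main obstacle is this difference (swapping) estimate. It must deliver a genuine Hölder gain $(|x-y|/\sqrt{t_k})^{c}$ with only $N^{o(1)}$ losses, uniformly in $k$ up to the critical level $K_N$, where moments of order $p\ge3$ are barely finite. I would first try it with $p=2$, or a small even $p$ built from fourth moments of $Z$ at scale $t_{k+1}/M$. I would also use enough polynomial decay to absorb the $N^{o(1)}$ moment losses from the chaining entropy. Summing the failure probabilities over $K_0\le k<K_N$ and letting $N\to\infty$ gives the proposition.
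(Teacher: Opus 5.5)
There is a genuine gap. Your overall architecture matches the paper's: level ratios $\Phi_k:=Z_{t_{k+1}}/Z_{t_k}$, single-point high moments from Lemmas~\ref{lem: Markov for ZN} and~\ref{lem:moment-all-scales}, an increment bound via $\Delta/(Z_{t_k}(x)Z_{t_k}(y))$ with swapping and negative moments, then chaining. But two quantitative choices are wrong, and the second one breaks the argument.

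\textbf{The budget is reversed.} Heuristically, $\max_x\log\Phi_k(x)$ over the box is of order $\sqrt{2^{-k}\log N}$. This is largest at $k=K_0$ and of order one near $k=K_N$. With your $a_k=c_\delta\theta^{K_N-k}\sqrt{\log N}$ you get $a_{K_0}=c_\delta\theta^{-K_0}\theta^{K_N}\sqrt{\log N}=o(\sqrt{\log N})$, since $\theta^{K_N}\to0$. This contradicts your own requirement $a_k\gg 2^{-k/2}\sqrt{\log N}$. The budget must decrease in $k$. The paper uses thresholds $Bb_k$ with $b_k=\sqrt{(k+1)2^{-k}\log N}+(\log\log N)^3$, whose sum over $K_0\le k<K_N$ is at most $C\sqrt{(K_0+1)2^{-K_0}\log N}+C(\log\log N)^4$. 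That is where $K_0$ large is used, not to move $\widetilde\beta$ toward $1$. The additive $(\log\log N)^3$ absorbs the $e^{C(\log\log N)^2}$ losses at the near-critical levels.

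\textbf{Chaining from mesh $\sqrt{t_k}$ cannot be closed.} An increment bound of fixed order $p$ gives no gain at $|x-y|\asymp\sqrt{t_k}$. Chaining inside a $\sqrt{t_k}$-box therefore yields only $\mathbb{E}[\sup_x|\Phi_k(x)-\Phi_k(z)|^p]\le e^{C(\log\log N)^2}$. Markov at level $\frac{1}{2}e^{a_k}$, plus a union over $N^{2^{-k}}$ boxes, then requires $pa_k\gtrsim 2^{-k}\log N$. At $k=K_0$ this fails for fixed $p$ whatever budget you pick, since $a_{K_0}\le\delta\sqrt{\log N}$. You would need increment moments of order $\asymp\sqrt{2^{-k}\log N}$, which swapping does not provide. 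Passing to log-differences does not help and adds an unneeded uniform lower bound on $\Phi_k$.

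The paper's fix is to cover the box by balls of radius $r_k=\frac{1}{36}\sqrt{t_k}\,e^{-2^{-k}\log N}$, giving $O(N^{3\cdot 2^{-k}})$ centers.
\begin{itemize}
\item The centers are handled by single-point moments of order $q_k\asymp\sqrt{2^{-k}\log N/k}$. These still beat $N^{3\cdot2^{-k}}$ at level $\frac{1}{2}e^{Bb_k}$ once $B$ is large.
\item Inside each ball, fixed-$p$ chaining gives $\mathbb{E}[\sup_{x\in Q}|\Phi_k(x)-\Phi_k(z_Q)|^p]\le C e^{C(\log\log N)^2-4\cdot 2^{-k}\log N}$, which beats the number of balls.
\item One concludes from $\Phi_k(x)\le \Phi_k(z_Q)+|\Phi_k(x)-\Phi_k(z_Q)|$, with no lower bound on $\Phi_k$ needed.
\end{itemize}

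This requires the exponent $A$ in $\mathbb{E}|\Phi_k(x)-\Phi_k(y)|^p\le(|x-y|/\sqrt{t_k})^{A}$ to exceed the dimension $2$, enough that the gain beats the ball count; the paper takes $A=4$. A small power $(|x-y|^2/t_k)^c$ is not enough. Swapping gives only a small pathwise power $(u/s)^\alpha$, so the exponent is amplified by taking $p$ large, the opposite of your plan to start with $p=2$. Large fixed $p$ costs nothing: Corollary~\ref{cor:positive-moment-bound} bounds every fixed moment by $e^{C(\log\log N)^2}$ up to time $N$. So moments of order $p\ge 3$ are not the obstruction you anticipate.
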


{The second proposition establishes the necessary controls for the subcritical regime, and   follows from estimates in \cite[Section~3]{CNZ-max}. For completeness, the proof is provided in  Section~\ref{subsec: subcritical}.

\begin{proposition} \label{prop:Subcritical}
For any $\delta>0$, 
\[
\limsup_{\varepsilon\to 0}\limsup_{N\to\infty}\mathbb{P}\left(\exists x\in {\rm B}(\sqrt{N}), \log Z_{N^{1-\varepsilon}}(x)>(2\sqrt{2}+\delta) \sqrt{\log N}\right) =0.
\]
\end{proposition}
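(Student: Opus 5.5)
The plan is to reduce Proposition~\ref{prop:Subcritical} to the subcritical maximum estimates of \cite{CNZ-max}, using the reparametrization \eqref{eq-301026a} together with a tiling of ${\rm B}(\sqrt N)$ and a union bound.

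\emph{Step 1: reparametrize.} Set $t:=N^{1-\varepsilon}$. By \eqref{eq-301026a}, $Z_t[\beta_N]=Z_t[\widetilde\beta_t/\sqrt{R_t}]$, and since $R_t/R_N=(1-\varepsilon)(1+o(1))$, we have $\widetilde\beta_t^2\to 1-\varepsilon=:\hat\beta_\varepsilon^2<1$. So, uniformly in $N$ large, the field $x\mapsto \log Z_t(x)$ is the partition-function field of a subcritical polymer of length $t$ at inverse temperature $\hat\beta_\varepsilon/\sqrt{R_t}$, up to a perturbation $\hat\beta_\varepsilon+o(1)$. The moment bounds from \cite{CN-moments} used in \cite{CNZ-max} are uniform over $\hat\beta$ in compact subsets of $(0,1)$, so this perturbation is harmless.

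\emph{Step 2: quantitative upper tail.} I would extract from \cite[Section~3]{CNZ-max} the following statement, which is Lemma~\ref{lem:CNZ-upper-tail} as referred to in the structure section. For fixed $\hat\beta<1$ and $\eta>0$ there exist $c=c(\eta,\hat\beta)>0$ and $t_0$ such that, for $t\geq t_0$,
\[
\mathbb P\left(\max_{x\in {\rm B}(\sqrt t)}\log Z_t(x)>(2\sqrt2+\eta)\sqrt{\log t}\right)\le t^{-c}.
\]
The upper bound in \cite{CNZ-max} is proved by a first-moment computation over a coarse-grained net combined with high moments (via the branching random walk comparison). Exponential Markov bounds with moments of order $p\asymp\sqrt{\log t}$ give single-point tails of order $t^{-(2+\eta')}$ after normalization by the box area $t$. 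Tracking these constants yields a polynomial, rather than merely $o(1)$, error. The essential point is that the excess $\eta$ is converted into a power saving $t^{-c}$ with $c$ independent of $\varepsilon$ once $\hat\beta$ is bounded away from $1$. This is where I expect the main obstacle: one must check that $c(\eta,\hat\beta_\varepsilon)$ does not degenerate too fast as $\hat\beta_\varepsilon\to1$, i.e.\ as $\varepsilon\to0$.

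\emph{Step 3: tiling and union bound.} Tile ${\rm B}(\sqrt N)$ by at most $CN^{\varepsilon}$ translates ${\rm B}(z_i,\sqrt t)$. By translation invariance of the environment, each tile satisfies the bound of Step~2. Since $\sqrt{\log t}\le\sqrt{\log N}$, choosing $\eta=\delta$ gives
\[
\mathbb P\left(\exists x\in{\rm B}(\sqrt N):\ \log Z_t(x)>(2\sqrt2+\delta)\sqrt{\log N}\right)\le CN^{\varepsilon}\,N^{-(1-\varepsilon)c}.
\]
This tends to $0$ provided $\varepsilon<c(1-\varepsilon)$. The required inequality $c>\varepsilon/(1-\varepsilon)$ is not automatic, since $c$ is evaluated at $\hat\beta_\varepsilon$, which itself depends on $\varepsilon$.

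If $c$ does degenerate as $\varepsilon\to0$, I would avoid the union bound over tiles. Instead I would rerun the first-moment argument of \cite{CNZ-max} directly on the full box of area $N$, which amounts to demanding tails of order $N^{-1}$ rather than $t^{-1}$. In the exponent this trades $\sqrt{\log t}$ for $\sqrt{\log N}$, which costs only a factor $(1-\varepsilon)^{-1/2}\to1$. The leading constant for $\hat\beta<1$ is governed by the effective variance profile: $\log Z_t$ has variance $\approx-\log(1-\hat\beta^2)$ spread over scales, and restricting to spatial box size $\sqrt N$ gives maximum $\approx\sqrt{2\log N\cdot\mathrm{Var}}$-type quantities, maximized at $2\sqrt2\sqrt{\log N}$ in the critical limit. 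The extra room $\delta$ absorbs the $\varepsilon$-dependent corrections, which justifies taking $\limsup_{\varepsilon\to0}$ last.
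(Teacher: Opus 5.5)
Your outline (tile ${\rm B}(\sqrt N)$ by $O(N^{\varepsilon})$ boxes of side $\sqrt{N^{1-\varepsilon}}$, use translation invariance and a union bound, and import a quantitative upper tail from the barrier argument of \cite{CNZ-max}) is the paper's. However, the step that carries the proof is left open. For fixed $\varepsilon$ the union bound closes only if the per-tile tail is at most $N^{-g(\varepsilon)}$ with $g(\varepsilon)>\varepsilon$ for all small $\varepsilon$. A bound $t^{-c(\eta,\hat\beta)}$ with unspecified dependence on $\hat\beta_\varepsilon=\sqrt{1-\varepsilon}\to1$ gives nothing. The version you hope for, with $c$ independent of $\varepsilon$, is not what the moment method gives. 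The last block $W_{M,N}$ is near-critical, so only moments of order $\sqrt{\varepsilon\log N}$ are admissible there, and the extractable rate degenerates.

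The missing ingredient is Lemma~\ref{lem:CNZ-upper-tail}: a per-tile bound $N^{-c\sqrt{\varepsilon}}$ with $c=c(\eta)$ independent of $\varepsilon$. It is proved in Appendix~\ref{app-pflem3.4} by tracking constants. With $M=\lceil A/\varepsilon\rceil$ generations one has $M\lambda_M^2\le \widehat\beta_{N,\varepsilon}^2/(1-\widehat\beta_{N,\varepsilon}^2)\approx 1/\varepsilon$, hence $\widehat q_M\gtrsim\sqrt{2}(1-\rho)\sqrt{\varepsilon\log N}$. The overshoot $a=\frac{\eta}{4}\sqrt{(1-\varepsilon)\log N}$ above the barrier then buys a factor $e^{-\widehat q_M a}\le N^{-c\eta\sqrt{\varepsilon}}$. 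Meanwhile the entropy left over after the barrier bookkeeping is only $n^{3/M}\le N^{3\varepsilon/A}$. Since $\sqrt{\varepsilon}\gg\varepsilon$, this beats both that residual and the $N^{\varepsilon}$ tiles.

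Your fallback does not avoid this. Running the barrier argument on the full box just replaces the number $Cn/r_{k-2}^2$ of coarse-grained blocks by $CN/r_{k-2}^2$, i.e.\ it inserts the same factor $N^{\varepsilon}$. In the first-moment bound this is an additive $\varepsilon\log N$ in the exponent. It must again be paid by the overshoot above the barrier times the admissible moment order in the last generation, which is exactly the estimate you have not established; the rescaling $\sqrt{\log t}\to\sqrt{\log N}$ does not address it.

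Relatedly, the heuristic ``$\sqrt{2\log N\cdot\mathrm{Var}}$'' with $\mathrm{Var}\approx-\log(1-\hat\beta^2)=\log(1/\varepsilon)$ would diverge as $\varepsilon\to0$. The constant comes instead from the time-inhomogeneous profile, $\sqrt{2(1-\varepsilon)}\int_0^1\sigma_\varepsilon(u)\,du=2\sqrt2(1-\sqrt\varepsilon)$ (see \eqref{eq:subcritical-coeff}). It is the blow-up of $\sigma_\varepsilon$ near $u=1$ that both limits the usable moments and produces the $\sqrt{\varepsilon}$ rate.
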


Combining {both propositions yields the proof of the upper bound, as follows.}
\begin{proof}[Proof of upper bound in Theorem~\ref{thm:critical-maximum}]
    Let $\delta,\eta>0$ be arbitrary. Then, we take $K_0'\in \mathbb N$ such that for any fixed $K_0>K_0'$,  Proposition~\ref{prop:reductionToSubcritical} holds. By Proposition~\ref{prop:Subcritical}, we take $K>K_0'$ such that if $N$ is sufficiently large, then  
    $$\mathbb{P}\left(\exists x\in {\rm B}(\sqrt{N}), \log Z_{t_K}(x)>(2\sqrt{2}+\delta) \sqrt{\log N}\right)<\eta.$$
    Note that 
    \begin{align}
\label{eq-111026d}
&        \mathbb{P}\left(\exists x\in {\rm B}(\sqrt{N}), \log Z_{N}(x)\geq (2\sqrt{2}+2\delta) \sqrt{\log N}\right)\\
&\leq \mathbb{P}\left(\exists x\in {\rm B}(\sqrt{N}), \log Z_{t_K}(x)>(2\sqrt{2}+\delta) \sqrt{\log N}\right)\nonumber\\
&\qquad + \mathbb{P}\left(\exists x\in {\rm B}(\sqrt{N}), \log Z_{N}(x) -\log Z_{t_K}(x)> \delta \sqrt{\log N}\right).\nonumber
    \end{align}
{By taking $N$ sufficiently large, both terms on right-hand side of \eqref{eq-111026d} are less than $\eta$. Therefore,
since $\eta$ is an arbitrary positive constant, we conclude that
\[ \lim_{N\to\infty} \mathbb{P}\left(\frac{\mathfrak M_N}{\sqrt{\log N}}\geq  2\sqrt{2}+2\delta\right)=0,\]
which completes the proof of the upper bound since $\delta>0$ is arbitrary.}
\end{proof}

\subsection{Chaining in the quasicritical regime - {Proof of Proposition~\ref{prop:reductionToSubcritical}}}\label{subsec: chaining quasicritical}
{The  following proposition is proved in  Section~\ref{sec: difference bound}. Recall the notation $K_N$ and $t_k$, see
\eqref{eq-111026e}.}
\begin{proposition}\label{prop: prop moment bound}
For any $A>1$, there exist $p=p(A)>1$ and $C=C(A,p)>1$ such that for all sufficiently
large $N$, the following holds for $2\leq k<K_N$ and
$|x-y| \leq \sqrt{t_{k}/2}$ with $x-y\in 4\mathbb Z^2$,
\begin{align} \label{eq:8times}
\mathbb{E} \left[\left|\frac{Z_{t_{k+1}}(x)}{Z_{t_{k}}(x)} - \frac{Z_{t_{k+1}}(y)}{Z_{t_{k}}(y)}\right|^p\right]\leq    e^{C k+ C(\log \log N)^2} \left(\frac{|x-y|}{\sqrt{t_k}}\right)^{A}.
\end{align}
\end{proposition}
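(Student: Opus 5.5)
We plan to write the difference of ratios over a common denominator,
\[
\frac{Z_{t_{k+1}}(x)}{Z_{t_k}(x)}-\frac{Z_{t_{k+1}}(y)}{Z_{t_k}(y)}
=\frac{Z_{t_{k+1}}(x)Z_{t_k}(y)-Z_{t_k}(x)Z_{t_{k+1}}(y)}{Z_{t_k}(x)Z_{t_k}(y)},
\]
and apply H\"older's inequality to split the numerator from the denominator. For the denominator we use Proposition~\ref{prop:iterated-lower-tail} with $U=C2^k$ from \eqref{eq:2ndMomentBoundIntroduction}. This gives $\mathbb{E}[Z_{t_k}(x)^{-q}]\le e^{Ckq^2}$, and after Cauchy--Schwarz in $x,y$ (translation invariance) this factor contributes at most $e^{Ck}$ for fixed $p$. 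It therefore suffices to show, for some $p'>p$ slightly larger,
\[
\mathbb{E}\bigl[|Z_t(x)Z_s(y)-Z_s(x)Z_t(y)|^{p'}\bigr]\le e^{C(\log\log N)^2}\Bigl(\frac{|x-y|}{\sqrt{t_k}}\Bigr)^{A'},\qquad s=t_k,\ t=t_{k+1},
\]
with $A'$ large.

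For the numerator we expand in time. Write $Z_t(x)=\sum_z Z_s(x,z)\,Z_{s,t}(z)$, where $Z_{s,t}(z)$ is the partition function on $(s,t]$ started at $z$, independent of $\mathcal F_s$. The numerator becomes
\[
\sum_{z,w}Z_s(x,z)Z_s(y,w)\bigl(Z_{s,t}(z)-Z_{s,t}(w)\bigr),
\]
which is the same expression as the original product difference. Swapping the roles of $(x,z)$ and $(y,w)$ antisymmetrises it, so we may replace $Z_s(x,z)Z_s(y,w)$ by $\tfrac12\bigl(Z_s(x,z)Z_s(y,w)-Z_s(x,w)Z_s(y,z)\bigr)$. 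This is the point of the swapping lemma (Lemma~\ref{lem: swapping lemma}): it yields, in $p'$-th moments, a factor that is small whenever the two endpoints of the walks started at $x$ and $y$ can be coupled. Concretely, we couple $S$ from $x$ and $S'$ from $y$ by mirror/coordinate coupling; the hypothesis $x-y\in 4\mathbb Z^2$ handles parity and makes the coupling exact. The walks then meet before time $s/2$ except with probability $O(|x-y|/\sqrt{s})$. On the coupled event the antisymmetrised kernel cancels exactly. The remaining contribution is a polymer expectation restricted to an event of annealed probability $\lesssim |x-y|/\sqrt{s}$.

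To convert this into a $p'$-moment bound we use Hölder on the replica expansion. The $p'$-th moment of the restricted sum is bounded by $(\text{annealed prob})^{1/r}$ times a higher positive moment. The higher moments are controlled by Corollary~\ref{cor:positive-moment-bound}, which gives $e^{C(\log\log N)^2}$, or by Lemma~\ref{lem:moment-all-scales}. This yields only a power $(|x-y|/\sqrt{t_k})^{c}$ with small $c$.

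To reach an arbitrary exponent $A$ we iterate the non-coupling event. Splitting $[0,s]$ into $m$ dyadic stages, failure to couple within each successive stage costs another factor. Alternatively we take $p$ large and exploit that the $p$ independent replicas must each fail to couple, which gives a factor $(|x-y|/\sqrt{t_k})^{cp}$ against moment costs that are controlled by Lemma~\ref{lem:moment-all-scales} as long as $\binom p2\lesssim \log(N/t_k)$.

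The main obstacle is this last step. The admissible $p$ in \eqref{eq:admissible-p-M} shrinks as $k\to K_N$, so the gain from many replicas must come from the restricted-event probability rather than from large $p$. The Gaussian chaos in the environment also prevents the replica factorisation from being exact. We would first try to run the coupling over the $\log(t_k/|x-y|^2)$ dyadic scales and use hypercontractivity on the annealed restriction, so that each scale contributes a constant-factor decay.
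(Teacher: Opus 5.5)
\textbf{There is a genuine gap: the step that must produce $(|x-y|/\sqrt{t_k})^{A}$ with $A$ large is not supplied, and the mechanism you propose for it would fail.}

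Your first reduction matches the paper: the common denominator, Cauchy--Schwarz, negative moments via Proposition~\ref{prop:iterated-lower-tail} with $U=C2^k$, and the rewriting $\Delta=\sum_{z,w}\bigl(Z_s(x,z)Z_s(y,w)-Z_s(x,w)Z_s(y,z)\bigr)Z_{s,t}(z)$.

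The problem is the coupling step that is supposed to make the antisymmetrised kernel small.
\begin{itemize}
\item A mirror or coordinatewise coupling of the laws of the walks from $x$ and $y$ gives no cancellation in a random environment. The two coupled paths collect different weights $e$ before they coalesce. In any case, $Z_s(x,z)Z_s(y,w)$ is an expectation under the product measure $\mathrm P_x\otimes\mathrm P_y$, not under a coupling.
\item The weight-preserving cancellation that does exist is the tail swap at a common space-time point of two \emph{independent} paths. On $\mathbb Z^2$, however, two independent walks at distance $|x-y|$ fail to meet before time $s$ with probability of order $\log(|x-y|^2)/\log s$. This is only logarithmically small, not $O(|x-y|/\sqrt s)$; the latter rate holds only for the dependent coupled walks.
\item As you note, Hölder turns an annealed probability into a fixed power of it, independent of $p$. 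The chaining in the proof of Proposition~\ref{prop:reductionToSubcritical} needs an exponent exceeding the dimension (the paper uses $A=4$).
\end{itemize}
So the ``main obstacle'' you describe is the whole content of the proposition, and ``hypercontractivity on the annealed restriction'' is not a mechanism that resolves it.

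Here is what the paper does instead. It expands the two-replica weight over dyadic blocks between $u\approx|x-y|^2$ and $s$ as $\prod_i(1+\widetilde\eta_i)$.
\begin{itemize}
\item On blocks with no chaos factor the walks are free. On diffusive such blocks the local CLT gives a Doeblin minorization $f_\ell(x_\ell,x_{\ell+1})f_\ell(y_\ell,y_{\ell+1})\ge\kappa_L\rho_\ell(x_{\ell+1})\rho_\ell(y_{\ell+1})$, where $\rho_\ell=p_{2^\ell}(z_\star,\cdot)$ does not depend on the starting points. The hypothesis $x-y\in4\mathbb Z^2$ is used so that $z_\star=(x+y)/2$ is a lattice point of the right parity.
\item The swapping lemma (Lemma~\ref{lem: swapping lemma}) then yields, \emph{almost surely}, a contraction $(1-c_0)^{M/8}\le(u/s)^{\alpha}$ for the small-support part (Lemma~\ref{lem: swapping small-chaos}). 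Because the bound is pathwise, the $2p$-th moment inherits a power of $u/s$ that grows linearly in $p$, up to the $\varepsilon\log(1/\varepsilon)$ combinatorial loss. A fixed large $p=p(A)$ therefore reaches any $A$.
\item The complementary pieces are shown to be $O((u/s)^{A})$ by tuning $L$ and $D$ rather than $p$. Paths with many non-diffusive blocks are handled by Lemma~\ref{lem:bad-event-moment}, with decay $(u/s)^{cL^2}$. Chaoses supported on more than $\varepsilon M$ blocks are handled by Lemma~\ref{lem:step3}: each selected block forces a replica collision costing $C_2/D$ by Lemma~\ref{lem:weighted-first-hit}.
\end{itemize}

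Finally, your concern that the admissible $p$ shrinks as $k\to K_N$ does not arise. Here $p$ is fixed, and Corollary~\ref{cor:positive-moment-bound} bounds every fixed moment by $e^{C(\log\log N)^2}$, which the statement allows.
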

{The rest of this subsection is devoted to  the proof of Proposition~\ref{prop:reductionToSubcritical}. The proof
uses Proposition \ref{prop: prop moment bound} with $A=4$.}
\begin{proof}[Proof of Proposition~\ref{prop:reductionToSubcritical}]
 If $2^{-k}\log N\geq (\log{\log N})^2$, then \textcolor{blue}{by Lemma \ref{lem: Markov for ZN} and} Lemma~\ref{lem:moment-all-scales} with $M=1$ \textcolor{blue}{and $T=t_{k+1}$}, we have
$$
\mathbb{E} \left[\left(\frac{Z_{t_{k+1}}(x)}{Z_{t_k}(x)}\right)^q\right]
\leq \exp(Cq^2(k+1)),
\qquad 3\leq q\leq c\sqrt{2^{-k}\log N},\quad q\in\mathbb N.
$$
 For $t_k\leq T\leq t_{k+1}$ with $2^{-k}\log N\geq (\log{\log N})^2$,
 we have the bound $\mathbb{E} [Z_T^2]\leq C2^{k+1}$.
Thus, Lemma~\ref{lem:moment-all-scales} with $M=1$, \eqref{eq:negative-moments}, and H\"older's inequality imply, for any $3\leq q\leq \sqrt{2^{-k}\log N}/8$,
$$
\mathbb{E} \left[\left(\frac{Z_{t_k}(x)}{Z_T(x)}\right)^q\right]
\leq \mathbb{E} [Z_{t_k}^{2q}]^{1/2}
       \mathbb{E} [Z_T^{-2q}]^{1/2}
\leq \exp(Cq^2(k+1)).
$$

For the remaining values of $k$, the moment estimate Corollary~\ref{cor:positive-moment-bound} with Lemma \ref{lem: Markov for ZN} gives
$$
\mathbb{E} \left[\left(\frac{Z_{t_{k+1}}(x)}{Z_{t_k}(x)}\right)^3\right]
\leq \exp(C(\log{\log N})^2).
$$

Fix $A=4$, and let $p=p(4)>1$  be as in
Proposition~\ref{prop: prop moment bound}. For $2\leq k<K_N$, set
$$
\begin{aligned}
 r_k:=\frac{1}{36}\sqrt{t_k}e^{-2^{-k}\log N}.
\end{aligned}
$$ Fix $K_0 \geq2$ large enough that
$2^{-K_0 }<1/4$. Then $r_k\to\infty$ uniformly for
$K_0 \leq k<K_N$.
\begin{figure}\label{fig: fig1}
  \centering
  \includegraphics[width=0.85\textwidth]{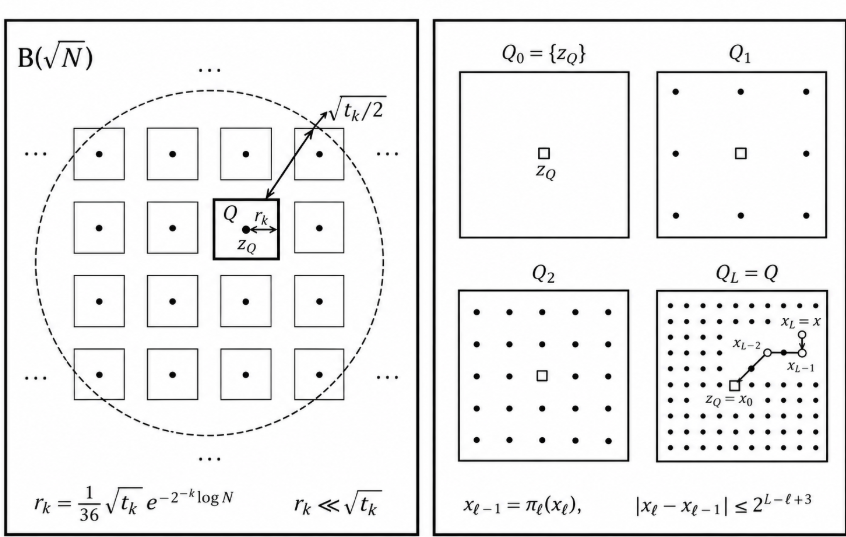}
  \caption{Schematic picture of the covering of ${\rm B}(\sqrt N)$ by sets $Q$ of radius $r_k$ (left), and of the nested nets $Q_\ell$ and the chain from $x\in Q$ to $z_Q$ (right).}
\end{figure}

{We now present a chaining argument. Consider} a covering of ${\rm B}(\sqrt N)$ with balls of radius  $r_k$: we take a family of points in $\mathbb Z^2$, $(z_Q)_{Q\in \mathcal Q}$, such that
$${\rm B}(\sqrt N)\cap \mathbb Z^2 \subset \bigcup_{Q\in \mathcal Q} ({\rm B}(z_Q,r_k)\cap  (z_Q+4\mathbb Z^2)),\quad |\mathcal Q|\leq C\frac{N}{r_k^2}.$$

With abuse of notation, we write $Q$ for ${\rm B}(z_Q,r_k)\cap (z_Q+4\mathbb Z^2)$.  Let $L:=\lfloor \log_2 r_k\rfloor$, so that $2^L \leq r_k<2^{L+1} $.
For $0\leq \ell\leq L$, consider the nets
$$
Q_\ell:=\big(z_Q+2^{L -\ell+2}\,\mathbb Z^2\big)\cap Q .
$$
They increase with $\ell$. Since $2^{L+2} >r_k$, the coarsest net is $Q_0=\{z_Q\}$, and the finest net is $Q_L=Q$.
To each $x\in Q_\ell$ with $\ell\geq 1$, we assign a parent $\pi_\ell(x)\in Q_{\ell-1}$
 so that 
$$
|x-\pi_\ell(x)|\leq 2^{L -\ell+3},
\qquad x-\pi_\ell(x)\in 4\mathbb Z^2 .
$$

We write
$$
\Phi_k(x):=\frac{Z_{t_{k+1}}(x)}{Z_{t_k}(x)} .
$$
Fix $x\in Q$ and follow its ancestors: set $x_L:=x$ and $x_{\ell-1}:=\pi_\ell(x_\ell)$ for $1\leq \ell\leq L$,
so that $x_\ell\in Q_\ell$ and $x_0=z_Q$. Then
$$
\Phi_k(x)-\Phi_k(z_Q)=\sum_{\ell=1}^{L}\big(\Phi_k(x_\ell)-\Phi_k(x_{\ell-1})\big).
$$
 Taking the supremum over $x\in Q$ gives
$$
\sup_{x\in Q}|\Phi_k(x)-\Phi_k(z_Q)|
\leq\sum_{\ell=1}^{L}\max_{u\in Q_\ell}|\Phi_k(u)-\Phi_k(\pi_\ell(u))|.
$$

For each $u\in Q_\ell$, by Proposition~\ref{prop: prop moment bound}, we have 
$$
\mathbb{E} [|\Phi_k(u)-\Phi_k(\pi_\ell(u))|^p]^{1/p}
\leq e^{C(\log\log N)^2}
\left(\frac{r_k}{\sqrt{t_k}}\right)^{4/p}2^{-4\ell/p},
$$
with some $C=C(p)>0$, where we have used $k<K_N=O(\log\log N)$ to absorb the factor
$e^{Ck/p}$. Note that the choice of $r_k$ ensures the condition \corO{needed for the application of
}
 \eqref{eq:8times}.
By Minkowski's inequality, we obtain
$$
\begin{aligned}
\mathbb{E} \left[\sup_{x\in Q}|\Phi_k(x)-\Phi_k(z_Q)|^p\right]^{1/p}
&\leq\sum_{\ell=1}^{L}
\mathbb{E} \left[\max_{u\in Q_\ell}|\Phi_k(u)-\Phi_k(\pi_\ell(u))|^p\right]^{1/p}\\
&\leq\sum_{\ell=1}^{L}
\left(\sum_{u\in Q_\ell}\mathbb{E} [|\Phi_k(u)-\Phi_k(\pi_\ell(u))|^p]\right)^{1/p}.
\end{aligned}
$$
 Since $Q_\ell$ consists of points with spacing $2^{L -\ell+2}$
in a ball of radius $r_k<2^{L+1}$, we have $|Q_\ell|\leq C2^{2\ell}$.
 Hence, we obtain
$$
\mathbb{E} \left[\sup_{x\in Q}|\Phi_k(x)-\Phi_k(z_Q)|^p\right]^{1/p}
\leq e^{C(\log{\log N})^2}\left(\frac{r_k}{\sqrt{t_k}}\right)^{4/p}
\sum_{\ell\geq1}2^{-2\ell/p}.$$
 The series converges, and hence
$$
\mathbb{E} \left[\sup_{x\in Q}|\Phi_k(x)-\Phi_k(z_Q)|^p\right]
\leq C \exp(C(\log{\log N})^2- 2^{2-k}\log N).
$$
We take a constant $B>0$ {large, 
which will be chosen below, and set}
$$b_k:=\sqrt{(k+1)2^{-k}\log N}+(\log{\log N})^3.$$
The union bound with $|\mathcal Q|\leq C e^{3( 2^{-k} \log N)}$ and Markov's inequality now give
\begin{equation}\label{eq:quasicritical-oscillation}
  \mathbb P\left(\exists Q\in \mathcal Q:
\sup_{x\in Q}|\Phi_k(x)-\Phi_k(z_Q)|>\tfrac12e^{B b_k}\right)
\leq C\exp(-2^{-k}\log N-B p\, b_k+C(\log{\log N})^2).
\end{equation}

We set 
$$q:=q_k: = 
\begin{cases}
    \left\lfloor \sqrt{\frac{2^{-k}\log N}{16k}}\right\rfloor,\quad & \text{if }
2^{-k}\log N\geq (\log{\log N})^2\\
 3,&\text{otherwise.}
\end{cases}
$$

When $2^{-k}\log N\geq (\log{\log N})^2$, for all sufficiently large $N$ we have
$q_k\geq3$, $\binom{q_k}{2}\leq 2^{-k}\log N/8$,
so that \eqref{eq:admissible-p-M} holds with $p=q_k$ $M=1$ and $T=t_{k+1}$,  and
Lemmas~\ref{lem: Markov for ZN} and \ref{lem:moment-all-scales} apply. Thus, by the union bound and  invariance by translation,
\begin{align}
\label{eq-111026h}
\mathbb P\left(\exists Q\in \mathcal Q: \Phi_k(z_Q)>\tfrac12e^{B b_k}\right)
& \leq |\mathcal Q|\mathbb P\left(\Phi_k(0)>\tfrac12e^{B b_k}\right)\\
&\leq C\exp(3\cdot 2^{-k}\log N-B q_k b_k+q_k\log2+Cq_k^2(k+1)).\nonumber
\end{align}
Taking $B$ sufficiently large (depending on the constant $C$ above) so that $B q_k b_k$ dominates the others in the exponent,  the left-hand side of \eqref{eq-111026h}  is bounded by 
$$C\exp(-(\log{\log N})^3).$$ 

If $2^{-k}\log N<(\log{\log N})^2$, by Lemma~\ref{lem: Markov for ZN} and Corollary~\ref{cor:positive-moment-bound} with   $p=3$, the left hand side of \eqref{eq-111026h}  is bounded by
 \begin{align*}
     |\mathcal Q|\mathbb P\left(\Phi_k(0)>\tfrac12e^{B b_k}\right)&\le |\mathcal Q| e^{- 3 B b_k} \mathbb{E} [(Z_{\lfloor t_{k+1}\rfloor -\lfloor t_k\rfloor })^3]\\
     & \leq C\exp(C(\log{\log N})^2-2(\log{\log N})^3).
      \end{align*}

Note that  $\Phi_k(x)>e^{B b_k}$ for some $x\in Q$ implies either $\Phi_k(z_Q)>\tfrac12e^{B b_k}$ or $|\Phi_k(x)-\Phi_k(z_Q)|>\tfrac12e^{B b_k}$. Together with the  bound \eqref{eq:quasicritical-oscillation}, the union bound yields that for $K_0 \leq k<K_N$,
$$
\mathbb P\left(\sup_{x\in {\rm B}(\sqrt N)} \Phi_k(x)>e^{B b_k}\right)
\leq Ce^{-(\log{\log N})^3},
$$
for all sufficiently large $N$. Since $K_N\leq C \log\log N$,
\begin{equation}
\label{eq-121026b}\mathbb P\left(\exists k\in [K_0,K_N)\cap \mathbb Z,\,\sup_{x\in {\rm B}(\sqrt N)}\log\Phi_k(x)>B b_k\right) \leq C K_N e^{-(\log{\log N})^3},\end{equation}
which converges to $0$.

On the complement of the event {inside the probability in \eqref{eq-121026b},} for every $x\in {\rm B}(\sqrt N)$,
$$
\begin{aligned}
\log\frac{Z_N(x)}{Z_{t_{K_0}}(x)}
&=\sum_{k=K_0}^{K_N-1}\log\Phi_k(x)
\leq\sum_{k=K_0}^{K_N-1}B b_k
\leq C\sqrt{(K_0+1)2^{-K_0}\log N}+C((\log\log N)^4),
\end{aligned}
$$
with some $C$ depending on $B$. Taking $K_0$ large enough so  that $C\sqrt{(K_0+1)2^{-K_0}}<\delta/2$
and then $N$ sufficiently large, this is bounded by $\delta \sqrt{\log N}$. {This completes the proof of Proposition~\ref{prop:reductionToSubcritical}.}
\end{proof}
\subsection{The subcritical estimate}
\label{subsec: subcritical}

We prove here Proposition~\ref{prop:Subcritical}.  {The proof is a rather straight-forward consequence of the following  quantitative
upper bound from \cite[Section 3]{CNZ-max}, rewritten in our notation.}
\begin{lemma}
\label{lem:CNZ-upper-tail}
For every $\eta>0$ there exist $c>0$ and
$\varepsilon_0>0$ such that the following holds.
Let $0<\varepsilon<\varepsilon_0$, and let $\beta_N\geq0$ be a
deterministic sequence satisfying
$$
    \widehat\beta_{N,\varepsilon}^2
  :=\beta_N^2R_{\lfloor N^{1-\varepsilon}\rfloor}
    =1-\varepsilon+O((\log N)^{-1}),
$$
where
$$
    R_n:=\sum_{j=1}^n\mathrm P_0(S_{2j}=0)
       =\frac{\log n}{\pi}+O(1).
$$
Then, uniformly in $z\in\mathbb Z^2$, for all sufficiently large $N$
depending on $\eta,\varepsilon$ and the error bound in the hypothesis,
\begin{equation}
\label{eq:CNZ-tail}
 \mathbb P\left(
   \max_{x\in {\rm B}(z,\sqrt{N^{1-\varepsilon}})}
    \log Z_{N^{1-\varepsilon}}^{\beta_N}(x)
     >(2\sqrt2+\eta)\sqrt{\log N}
 \right)
 \le N^{-c\sqrt{\varepsilon}} .
\end{equation}
\end{lemma}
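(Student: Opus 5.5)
The plan is to reduce \eqref{eq:CNZ-tail} to the subcritical setting of \cite{CNZ-max} by a change of parameters, and then rerun the first-moment/barrier argument of \cite[Section~3]{CNZ-max} while tracking the error terms quantitatively. By translation invariance of the environment it suffices to take $z=0$. Set $n:=\lfloor N^{1-\varepsilon}\rfloor$. By hypothesis $\beta_N^2=\widehat\beta_{N,\varepsilon}^2/R_n$ with $\widehat\beta_{N,\varepsilon}^2=1-\varepsilon+O(1/\log N)$, so $Z^{\beta_N}_{n}(x)$ is exactly the subcritical partition function at horizon $n$ with inverse temperature $\widehat\beta/\sqrt{R_n}$, $\widehat\beta<1$ uniformly, as in \eqref{eq-301026a}. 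The box ${\rm B}(\sqrt n)$ is the natural diffusive box for this polymer, so we are exactly in the framework of \cite{CNZ-max}, with the target level $(2\sqrt2+\eta)\sqrt{\log N}\geq(2\sqrt2+\eta)\sqrt{\log n}$.

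\textbf{Step 1: multiscale decomposition.} Choose a mesoscopic sequence of times $s_j=n^{j/J}$ for $j=0,\dots,J$, with $J$ large but fixed depending on $\eta$. Write
\[
\log Z_n(x)=\sum_{j}\log\frac{Z_{s_{j+1}}(x)}{Z_{s_j}(x)} .
\]
Points of ${\rm B}(\sqrt n)$ within distance $\sqrt{s_j}$ of one another share, to good approximation, the first $j$ increments. For each increment I will use the high-moment bounds of \cite[Theorem~2.7]{CN-moments}, combined with Lemma~\ref{lem: Markov for ZN}, which control moments of order $p$ up to $p^2\lesssim(1-\widehat\beta^2)^{-1}\cdot(\text{scale})$. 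These give Gaussian-type tails for each increment, with a variance profile $\lambda_j^2$ given by the increments of $-\log(1-\widehat\beta^2\log s/\log n)$. The toy model is then a branching random walk with time-inhomogeneous variance, as in \cite{CNZ-max}. Since the branching rate (a factor $s_{j+1}/s_j$ per scale) is uniform in $\log s$, while the variance is concentrated near the end where $\widehat\beta^2\log s/\log n\to\widehat\beta^2$, the optimal path is \emph{not} the straight line. The leading order is therefore computed by the concave-hull/barrier optimization of \cite{CNZ-max}, giving $2\sqrt2\sqrt{\log n}(1+o_\varepsilon(1))$.

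\textbf{Step 2: union bound with barrier.} Define the bad event that for some $j$ and some point $x$ of a $\sqrt{s_j}$-net, the partial sum up to scale $j$ exceeds the optimal curve plus $\frac{\eta}{2}\sqrt{\log N}$. Bound its probability by (number of net points)$\times$(Chebyshev with the $p$-th moment, $p$ optimized). Passing from the net to all points inside a block of size $\sqrt{s_j}$ costs only a lower-order term. For the top scales I would use continuity estimates of the type in Proposition~\ref{prop: prop moment bound}, or simply a union bound over all $n$ lattice points at the last scale.

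\textbf{Main obstacle.} The difficulty is producing the explicit polynomial decay $N^{-c\sqrt\varepsilon}$ rather than just $o(1)$, uniformly near criticality. Near the final scales, $1-\widehat\beta^2\log s/\log n\approx\varepsilon$, so the admissible moment order is only $p\lesssim\sqrt{\varepsilon\log N}$. Chebyshev at excess level $\frac{\eta}{2}\sqrt{\log N}$ with such $p$ then gives a gain of $\exp(-c\eta p\sqrt{\log N})=N^{-c\sqrt\varepsilon}$, which is the claimed rate. I would first check that this exponent still beats the entropy factor coming from the roughly $N^{O(\varepsilon)}$ extra last-scale blocks once $\varepsilon<\varepsilon_0(\eta)$; this is where the smallness of $\varepsilon$ is used.
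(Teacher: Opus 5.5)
\textbf{There is a genuine gap, and it sits at the point that carries the $\varepsilon$-dependence of the lemma.} Your endgame matches the paper's. The gain comes from the last time scale, where the admissible moment order is only of order $\sqrt{\varepsilon\log n}$. Pairing it with an excess $a=\frac{\eta}{4}\sqrt{(1-\varepsilon)\log N}$ over the barrier gives a factor $N^{-c\eta\sqrt\varepsilon}$, which must beat an entropy factor $N^{O(\varepsilon)}$.

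Your set-up cannot produce that entropy factor, because you fix the number of scales $J$ depending only on $\eta$. For $\varepsilon\ll 1/J$ the last increment has variance $\lambda_J^2=\log\bigl(1+\widehat\beta^2/(J(1-\widehat\beta^2))\bigr)\approx\log(1/(J\varepsilon))$. Three things then go wrong.
\begin{itemize}
\item The Chebyshev-optimal order $\sqrt{2\log n/J}/\lambda_J$ at the last level lies far outside the admissible range $q\lesssim\sqrt{2\varepsilon\log n}$.
\item With an admissible $q$, compensating even one level of branching $n^{1/J}$ forces a last-level barrier of height $\gtrsim\sqrt{\log n}/(J\sqrt{\varepsilon})$, which is unbounded as $\varepsilon\downarrow0$.
\item The residual last-level entropy is $n^{O(1/J)}$, a fixed power of $N$ that $N^{-c\eta\sqrt\varepsilon}$ cannot absorb.
\end{itemize}
So no $\varepsilon_0(\eta)$ with $c$ independent of $\varepsilon$ comes out of this.

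The missing choice is $J\asymp 1/\varepsilon$; the paper takes $M=\lceil A/\varepsilon\rceil$. This does three things at once.
\begin{itemize}
\item It makes the Riemann-sum error $\sqrt2/(M\sqrt\varepsilon)$ in the barrier height $\frac{\sqrt2}{\sqrt M}\sum_k\lambda_k$ small.
\item It makes $\widehat q_M\asymp\sqrt{\varepsilon\log n}$ admissible; the $1/M$ correction in the admissibility check is absorbed only because $1/M\le\varepsilon/A$.
\item It makes the residual entropy $n^{3/M}\le N^{3\varepsilon/A}$, which is dominated by $\widehat q_M a\gtrsim\eta\sqrt{\varepsilon}\log N$ because $\varepsilon\ll\sqrt\varepsilon$.
\end{itemize}
Your ``$N^{O(\varepsilon)}$'' is exactly what $M\asymp 1/\varepsilon$ buys.

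\textbf{Second, your multiscale tree is oriented backwards.} Points at distance $\ll\sqrt{s_j}$ share the \emph{late} increments $W_k=Z_{s_k}/Z_{s_{k-1}}$ with $k>j$ (compare Proposition~\ref{prop: prop moment bound}), not the first $j$. The early increments decorrelate at small spatial scales. The root of the tree is the final time scale, where the variance is largest, and that is why the answer is $\sqrt{2\log n}\int_0^1\sigma_\varepsilon$. With your orientation the variance would increase away from the root, and the concave-hull recipe would return the straight-line value $\sqrt{2\log n\int_0^1\sigma_\varepsilon^2}=\sqrt{2\log n\,\log(1/\varepsilon)}$.

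Accordingly, the barrier has to be imposed on the tail sums $\sum_{\ell\ge k}\log W_\ell$. The paper implements this as follows:
\begin{itemize}
\item each $W_\ell$ is controlled by block maxima over boxes of side $\sqrt{s_{\ell-2}}$ (\cite[Lemma~3.7]{CNZ-max});
\item a union bound is taken over $\asymp n/s_{k-2}$ boxes;
\item events are decomposed according to the largest $k$ at which the tail sum exceeds $a+\sum_{\ell\ge k}f_\ell$, where $f_\ell$ are the barrier increments.
\end{itemize}
A $\sqrt{s_j}$-net does not control $\log Z_{s_j}(x)$, which fluctuates on much smaller scales. Your fallback of a union bound over all $n$ lattice points at the last scale costs a factor $N^{1-\varepsilon}$, which $N^{-c\eta\sqrt\varepsilon}$ cannot absorb.
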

{The proof of Lemma \ref{lem:CNZ-upper-tail} is based on \cite[Section 3]{CNZ-max}, but requires following the constants there 
to account for the dependence on $\varepsilon$, which measures the distance to critically. For completeness, we provide the proof in Appendix
\ref{app-pflem3.4}.}
\begin{proof}[Proof of Proposition~\ref{prop:Subcritical}]
 Fix $\delta>0$.  Let $\varepsilon>0$ be arbitrary.  Put 
\[
\widehat\beta_{N,\varepsilon}^2
:=\beta_N^2R_{\lfloor N^{1-\varepsilon}\rfloor}.
\]
By the definition of $\beta_N$,
\[
\begin{aligned}
 \widehat\beta_{N,\varepsilon}^2
 &=
 \left(1-\frac{\vartheta}{\log N}\right)
 \frac{(1-\varepsilon)\log N+O(1)}{\log N+O(1)}
=
 1-\varepsilon+O((\log N)^{-1}).
\end{aligned}
\]
Thus Lemma~\ref{lem:CNZ-upper-tail} applies.

We next cover the box ${\rm B}(\sqrt N)$ by translates of
boxes of radius $\sqrt{N^{1-\varepsilon}}$.  There exist
$z_1,\ldots,z_{J_N}\in\mathbb Z^2$ such that
\[
  {\rm B}(\sqrt N)
  \subset
  \bigcup_{j=1}^{J_N} {\rm B}(z_j,\sqrt{N^{1-\varepsilon}})
\]
and
\[
  J_N
  \le
  C\left(1+\frac{\sqrt N}{\sqrt{N^{1-\varepsilon}}}\right)^2
  \le C N^\varepsilon .
\]

Apply Lemma~\ref{lem:CNZ-upper-tail}, with
$\eta:=\delta/2$.  Since
\(
  2\sqrt2+\frac\delta2
  <2\sqrt2+\delta,
\)
we obtain, by the union bound and spatial translation invariance,
\begin{align*}
&\mathbb P\left(
   \exists x\in {\rm B}(\sqrt N):
  \log Z_{N^{1-\varepsilon}}^{\beta_N}(x)
  >(2\sqrt2+\delta)\sqrt{\log N}
   \right)\\
&\qquad\le
 \sum_{j=1}^{J_N}
 \mathbb P\left(
  \max_{x\in {\rm B}(z_j,\sqrt{N^{1-\varepsilon}})}
    \log Z_{N^{1-\varepsilon}}^{\beta_N}(x)
  >(2\sqrt2+\delta)\sqrt{\log N}
 \right)\\
&\qquad\le
 C N^\varepsilon
 N^{-c_\delta\sqrt\varepsilon}=C N^{\,\varepsilon-c_\delta\sqrt\varepsilon},
\end{align*}
with some $c_\delta>0$ depending on $\delta$. Choose $\varepsilon_0:=\varepsilon_0(\delta)>0$ so small that for $0<\varepsilon<\varepsilon_0$,
\[
  \varepsilon-c_\delta\sqrt\varepsilon
  \le -\frac{c_\delta}{2}\sqrt\varepsilon.
\]
Consequently, for every fixed
$0<\varepsilon<\varepsilon_0$,
\[
 \limsup_{N\to\infty}
 \mathbb P\left(
   \exists x\in {\rm B}(\sqrt N):
   \log Z_{N^{1-\varepsilon}}(x)
  >(2\sqrt2+\delta)\sqrt{\log N}
 \right)=0.
\]
\end{proof}
\section{Proof of Proposition~\ref{prop: prop moment bound}: Moments for spatial increments}\label{sec: difference bound}
 Write 
\begin{align}\label{eq:ratioWriting}
\frac{Z_t(x)}{Z_s(x)}-\frac{Z_t(y)}{Z_s(y)}
=\frac{\Delta}{Z_s(x)Z_s(y)},
\qquad
\Delta:=Z_t(x)Z_s(y)-Z_t(y)Z_s(x).
\end{align}
By the Cauchy--Schwarz inequality, it will be enough to bound separately the positive $2p$-th moment of $\Delta$ and the negative moments of $Z_s$.
Sections \ref{subsec:diffusive-paths}--\ref{subsec:small-support} are dedicated to the former. The latter is handled with \eqref{eq:negative-moments}. The conclusion of the proof of Proposition \ref{prop: prop moment bound} is delayed to section \ref{subsec:increment-conclusion}. Without loss of generality, we assume that $x\neq y$ in this section. 

{The proof of the upper bound on moments of $\Delta$  is based on a coupling arguments: we will show that under the polymer measure, paths that start at different points $x,y$ will meet with high probability, at which point they become indistinguishable and then on this event $\Delta$ vanishes by symmetry. The actual argument is significantly more involved.}

\subsection{Restriction to diffusive paths}\label{subsec:diffusive-paths}

The next lemma controls the event that the random walks have many non-diffusive dyadic  intervals.
{Recall that $S_\cdot$ denotes a simple random walk on $\mathbb{Z}^2$ under the law $\mathrm P_0$.} {We call blocks $[2^{l},2^{l+1}]$ in which $|S(2^l)|<L2^{l/2}$ \textit{diffusive}.}
\begin{lemma} \label{lem:remainDiffusive}
For any $\delta>0$, there exist $c,C,L_0>0$ such that, for all $L\geq L_0$ and all integers $1\leq u\leq s$,
\begin{equation}\label{eq: F estimate}
\mathrm P_0\left(|\{l\in [\log_2 u,\log_2 s]\cap \mathbb Z:\Big|S(2^l)\Big|>L2^{l/2} \}| > \delta \log_2(s/u)\right)
\leq C \exp{\left(-cL^2\log_2(s/u)\right)}.
\end{equation}
\end{lemma}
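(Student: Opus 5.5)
We plan to prove the bound by exponential Chebyshev applied to a sum of weakly dependent indicators, with the dependence removed through a sparsification of the dyadic scales. Set $l_0:=\lceil\log_2 u\rceil$, $l_1:=\lfloor \log_2 s\rfloor$ and $n:=l_1-l_0+1$. Let $X_l:=\1\{|S(2^l)|>L2^{l/2}\}$. If $\log_2(s/u)$ is bounded, say less than $2/\delta$, the event requires at least one bad scale. Its probability is then at most $Cn e^{-cL^2}$ by the Gaussian tail of the walk (Hoeffding/Azuma on each coordinate gives $\mathrm P_0(|S(m)|>L\sqrt m)\le 4e^{-L^2/4}$). This is of the required form after adjusting $C$, since $\log_2(s/u)$ is bounded. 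So we may assume $n$ is large compared to $1/\delta$.

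The key step is decorrelation. Fix an integer gap $g\ge1$, to be chosen depending on $\delta$ only, and split the scales into $g$ residue classes mod $g$. If more than $\delta\log_2(s/u)$ scales are bad, then some class contains more than $\delta n/(2g)$ bad scales. Within one class $l_0+j, l_0+j+g,\dots$, write $S(2^{l})=S(2^{l-g})+(S(2^l)-S(2^{l-g}))$. Call scale $l$ \emph{strongly bad} if the increment $D_l:=S(2^l)-S(2^{l-g})$ satisfies $|D_l|>\tfrac L2 2^{l/2}$. Along a class the increments $D_l$ are independent, since they use disjoint time intervals. By Hoeffding each satisfies $\mathrm P(|D_l|>\tfrac L2 2^{l/2})\le 4e^{-L^2/16}$.

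A bad scale that is not strongly bad forces $|S(2^{l-g})|>\tfrac L2 2^{l/2}=\tfrac L2 2^{g/2}\,2^{(l-g)/2}$, so the previous scale in the class is bad with an even larger threshold. We iterate this dichotomy. Following the chain backwards from a bad but not strongly bad scale, thresholds grow by the factor $2^{g/2}/2\ge 2$ for $g\ge4$, until we reach a strongly bad scale with enhanced threshold, or reach the first scale of the class. Reaching the first scale means $|S(2^{l_0+j})|$ exceeds a huge multiple of $2^{(l_0+j)/2}$.

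To avoid this bookkeeping, I would instead use the cleaner bound obtained from the triangle inequality along the chain:
\[
|S(2^l)|\le \sum_{i\ge0}|D_{l-ig}|,
\]
where the sum runs down the class and the last term is $|S(2^{l_0+j})|$. So $X_l=1$ implies $\sum_i |D_{l-ig}|2^{-(l-ig)/2}2^{-ig/2}>L$. This gives $Y_{l-ig}:=|D_{l-ig}|2^{-(l-ig)/2}>L(1-2^{-g/2})2^{ig/4}\cdot c_g$ for some $i$, with $\sum_i c_g 2^{-ig/4}\dots\le1$, using geometric weights. The $Y$'s are independent with sub-Gaussian tails $\mathrm P(Y>y)\le 4e^{-y^2/4}$ uniformly. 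Each bad $l$ is therefore charged to some independent index $l'=l-ig$ with $Y_{l'}$ large, at level $\asymp L2^{ig/4}$. The number of $l$ charged to a given $l'$ through offset $i$ is at most one. The total count then satisfies
\[
\#\{\text{bad}\}\le\sum_i \#\{l': Y_{l'}>cL2^{ig/4}\}.
\]

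Finally, by independence and Chernoff, $\#\{l':Y_{l'}>cL2^{ig/4}\}$ exceeds $\delta_i n$ with probability at most $\exp(-c\delta_i n L^2 2^{ig/2})$, provided $L\ge L_0(\delta)$ makes the mean $4e^{-cL^2}n$ much smaller than $\delta_i n$. Choosing $\delta_i:=\delta 2^{-i-2}/g$ and summing over $i$ and over the $g$ classes gives the bound $C\exp(-cL^2 n)$ with $c$ depending on $\delta$. Since $n\ge \log_2(s/u)-1$, this yields \eqref{eq: F estimate}.

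The main obstacle is precisely this charging/decorrelation step. The events $X_l$ are strongly correlated across consecutive scales, because $S(2^l)$ contains $S(2^{l-1})$. A naive Chernoff bound on the $X_l$ fails, and the geometric decay of the influence of old increments must be exploited as above.
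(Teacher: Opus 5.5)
Your argument is correct, but it takes a different route from the paper. The paper does not decorrelate the scales by hand. It observes that more than $\delta\log_2(s/u)$ bad scales force $\sum_l |S(2^l)|^2/2^l>\delta L^2\log_2(s/u)$, and applies exponential Chebyshev with a small parameter $c_0$. It then bounds ${\mathrm E}_0\exp\bigl(c_0\sum_l|S(2^l)|^2/2^l\bigr)$ by $C^{\log_2(s/u)}$, using a cited exponential-moment (large-deviation) estimate for dyadic sums of the walk (Lawler--Limic, Lemma~7.2.3). So the strong correlation between consecutive scales is absorbed entirely into that estimate, and the case $s/u\le 2$ is handled by the CLT. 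You instead make the decorrelation explicit: you take independent increments along a sparse subsequence of scales, charge each bad scale geometrically to an increment exceeding a threshold of order $L2^{ig/4}$, and apply Chernoff bounds to counts of independent events. Your version is self-contained (Hoeffding and Chernoff only) and shows the mechanism, namely the geometric decay of the influence of old increments. The paper's version is two lines given the citation.

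When you write it out, tighten three points.

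First, the charging constant should be $c_g:=1-2^{-g/4}$: if $Y_{l-ig}\le c_gL2^{ig/4}$ for all $i$, then $\sum_i Y_{l-ig}2^{-ig/2}\le L$.

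Second, let $N_i$ be the number of $l'$ in the class with $Y_{l'}>c_gL2^{ig/4}$, $m$ the class size, and $p_i\le 4e^{-c_g^2L^2 2^{ig/2}/4}$. Summing the Chernoff bounds over $i$ needs $\delta_i 2^{ig/2}$ bounded below, which for $\delta_i=\delta 2^{-i-2}/g$ requires $g\ge 2$. So state $g\ge4$ explicitly in the final argument, not only in the discarded dichotomy paragraph. This also covers the indices with $\delta_i n<1$: there you bound $\mathrm P(N_i\ge1)\le mp_i$ and use $2^{ig/2}>\delta_i n\,2^{ig/2}\ge \delta n 2^i/(4g)$ to absorb the factor $m$.

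Third, concluding from $n\ge\log_2(s/u)-1$ leaves a factor $e^{cL^2}$, which is not allowed because $C$ must be independent of $L$. Split the cases at $\log_2(s/u)\ge\max(2,2/\delta)$, so that $n\ge\frac12\log_2(s/u)$ in the main case.
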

\begin{proof}
Assume first that $s>2u$. If
\[
\big|\{l\in [\log_2 u,\log_2 s]\cap \mathbb Z:|S(2^l)|>L2^{l/2}\}\big|>\delta  \log_2(s/u),
\]
then
\[
\sum_{l\in [\log_2 u,\log_2 s]\cap \mathbb Z}\frac{|S(2^l)|^2}{2^l}
>\delta L^2  \log_2(s/u).
\]
{Therefore, by the exponential Markov inequality with $c_0>0$ sufficiently small together with  the large deviations for dyadic sum of simple random walk \cite[Lemma 7.2.3]{Lawler-Limic}, we obtain  for $L$ sufficiently large,}
\[
\begin{aligned}
&\mathrm P_0\left(
\big|\{l\in [\log_2 u,\log_2 s]\cap \mathbb Z:|S(2^l)|>L2^{l/2}\}\big|>\delta  \log_2(s/u)
\right)
\\
&\qquad\le
\mathrm P_0\left(
\sum_{l\in [\log_2 u,\log_2 s]\cap \mathbb Z}\frac{|S(2^l)|^2}{2^l}
>\delta L^2\log_2(s/u)
\right)
\\
&\qquad\le
\exp(-c_0\delta L^2\log_2(s/u))\,{\mathrm E} _0\exp\left(c_0\sum_{l\in [\log_2 u,\log_2 s]\cap \mathbb Z}\frac{|S(2^l)|^2}{2^l}\right)\\
&\qquad\le
C\exp(-cL^2\log_2(s/u)).
\end{aligned}
\]
This gives \eqref{eq: F estimate} {for $s\geq 2u$. The case $s/u\leq 2$ follows,  after enlarging $C$, from  the  central limit theorem~\cite[Theorem~2.1.3]{Lawler-Limic}.}
\end{proof}

{In what follows, we let $S^1_\cdot$ and $S^2_\cdot$ denote two independent simple random walks on $\mathbb{Z}^2$ started at $x,y$, respectively, and we write $e^1_t,e^2_s$ for the polymer $e_\cdot$ on these respective walks.  We have by definition} $Z_t(x)Z_s(y)=E_{x,y}^{\otimes 2}[e_t^1 e_s^2]$.
 For a path $S$, let
\[
B(S)={B_{u,s,L}(S)}:=
\left|\left\{l\in[\log_2u,\log_2s]\cap\mathbb Z:
\left|S(2^l)-\frac{x+y}{2}\right|>2L2^{l/2}\right\}\right|.
\]
Then, we set 
\[
F_{1,2}:=\{B(S^1)+B(S^2)\le Q_*\},\quad Q_*:=\left \lfloor \frac14\log_2(s/u)\right\rfloor.
\]

From now and until the end of Section \ref{sec: difference bound},
we take $u,s,t,N\in\mathbb N$ and $x,y\in \mathbb Z^2$ satisfying
\begin{equation}\label{eq: condition for ustxy}
\begin{split}
&u := 2^{\lceil\log_2 (|x-y|^2)\rceil } \leq s\leq t\leq N.
\end{split}
\end{equation}

\begin{lemma}\label{lem:bad-event-moment}
There exist $c>0$ {and $L_0>0$} such that, for every integer $p\geq2$, there exists $C=C(p)>0$ such that for every 
$L{\geq L_0}$, and all sufficiently large $N$,
for $u,s,t,x,y$ satisfying \eqref{eq: condition for ustxy},
\begin{align}
\mathbb{E}  \left[\left(E_{x,y}^{\otimes 2}[e_s^1 e_t^2 \1\{F_{1,2}^\complement\}]\right)^p\right]
\leq C \mathbb{E} \big[Z_t(0)^{4p-1}\big]^{1/2} \left(\frac{u}{s}\right)^{cL^2}.
\label{eq-121026c}
\end{align}
The same bound holds for $\mathbb{E}  \left[\left(E_{x,y}^{\otimes 2}[e_t^1 e_s^2 \1\{F_{1,2}^\complement\}]\right)^p\right]$. 
\end{lemma}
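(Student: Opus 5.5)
We aim to bound $\mathbb{E}[(E^{\otimes 2}_{x,y}[e^1_s e^2_t \1\{F^\complement_{1,2}\}])^p]$ by expanding the $p$-th power into $p$ independent replicas. Write
\[
X:=E_{x,y}^{\otimes 2}[e_s^1 e_t^2 \1\{F_{1,2}^\complement\}]
= E_{x,y}^{\otimes 2}\big[(e_s^1 e_t^2)^{1/2}\cdot (e_s^1 e_t^2)^{1/2}\1\{F_{1,2}^\complement\}\big].
\]
By Cauchy--Schwarz with respect to the walk measure,
\[
X\le \Big(E_{x,y}^{\otimes2}[e_s^1e_t^2\1\{F_{1,2}^\complement\}]\Big)^{1/2} \big(Z_s(x)Z_t(y)\big)^{1/2},
\]
which is not yet useful. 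Instead I would use H\"older in the form
\[
X^p = X^{p-1}\cdot X \le \big(Z_s(x)Z_t(y)\big)^{p-1} X,
\]
and then Cauchy--Schwarz in $\mathbb{E}$:
\[
\mathbb{E}[X^p]\le \mathbb{E}\big[(Z_s(x)Z_t(y))^{2p-2}X\big]^{1/2}\,\mathbb{E}[X]^{1/2}
\quad\text{or}\quad \mathbb{E}[X^p]\le \mathbb{E}[(Z_s(x)Z_t(y))^{2p-1}]^{1/2}\,\mathbb{E}[X]^{1/2},
\]
using $X\le Z_s(x)Z_t(y)$ in the first factor, i.e.\ $X^{2p-1}\le (Z_s(x)Z_t(y))^{2p-1}$ after writing $X^p=X^{p-1/2}X^{1/2}$. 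The plan is therefore
\[
\mathbb{E}[X^p]\le \mathbb{E}\big[X^{2p-1}\big]^{1/2}\mathbb{E}[X]^{1/2}\le \mathbb{E}\big[(Z_s(x)Z_t(y))^{2p-1}\big]^{1/2}\,\mathbb{E}[X]^{1/2}.
\]

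For the first factor, Cauchy--Schwarz again together with translation invariance and the monotonicity of $t\mapsto \mathbb{E}[Z_t^q]$ (Lemma \ref{lem: Markov for ZN}) gives
\[
\mathbb{E}[(Z_s(x)Z_t(y))^{2p-1}]\le \mathbb{E}[Z_t(0)^{4p-2}]\le \mathbb{E}[Z_t(0)^{4p-1}]^{(4p-2)/(4p-1)}.
\]
Since $\mathbb{E}[Z_t^{4p-1}]\ge 1$ by Jensen, this is bounded by $\mathbb{E}[Z_t(0)^{4p-1}]$, which yields the factor $\mathbb{E}[Z_t(0)^{4p-1}]^{1/2}$ in \eqref{eq-121026c}.

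For the second factor, Fubini and $\mathbb{E}[e^1_se^2_t]$ computed for fixed paths give
\[
\mathbb{E}[X]=E^{\otimes2}_{x,y}\big[e^{\beta_N^2 \cdot \#\{n\le s: S^1_n=S^2_n\}}\1\{F^\complement_{1,2}\}\big].
\]
This exponential of the collision local time is the only real obstacle. I would handle it by Cauchy--Schwarz once more,
\[
\mathbb{E}[X]\le E^{\otimes2}\big[e^{2\beta_N^2 L_s(S^1,S^2)}\big]^{1/2}\,\mathrm P^{\otimes2}(F^\complement_{1,2})^{1/2}.
\]
Here $E^{\otimes 2}[e^{2\beta_N^2 L}]$ is not bounded near criticality, so a cleaner route is to apply H\"older with exponents $(r,r')$, $r$ close to $1$, so that $r\beta_N^2 R_s$ stays bounded. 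This is not possible uniformly at $s=N$, so I would instead absorb the collision term into the first factor. Concretely, write $\mathbb{E}[X]\le \mathbb{E}[X^{2}]^{1/2}\mathrm P^{\otimes 2}(F^\complement)^{1/2}$ via the inequality $\mathbb{E}[X]=\mathbb{E} E^{\otimes2}[e^1e^2\1_{F^\complement}]$ and Cauchy--Schwarz over the product measure $\mathbb{P}\otimes \mathrm P^{\otimes2}$:
\[
\mathbb{E}[X]\le \big(\mathbb{E}E^{\otimes2}[(e^1_se^2_t)^2]\big)^{1/2}\mathrm P^{\otimes2}(F^\complement)^{1/2}.
\]
The term $(e^1_s e^2_t)^2$ again produces $e^{c\beta_N^2}$-type exponentials in $n$, which is too large. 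So the correct splitting is to put the walk-probability through the polymer measure: bound $\mathbb{E}[X^p]\le \mathbb{E}[(Z_sZ_t)^{2p-1}]^{1/2}\,\mathbb{E}[X]^{1/2}$ only after reducing $\mathbb{E}[X]$ via the size-biased (annealed) law, under which the pair is a pair of walks reweighted by collisions. For this I would rely on the structure of $F^\complement$, which requires $>Q_*$ non-diffusive dyadic times relative to $(x+y)/2$. For each single walk, Lemma \ref{lem:remainDiffusive} (with $L$ replaced by $L$, since $|x-(x+y)/2|\le \sqrt u\ll 2L2^{l/2}$) gives probability $\le C(u/s)^{cL^2}$, and the collision tilt only changes the dyadic-scale Gaussian tails by a constant factor per scale when $\beta_N^2R$ stays bounded away from the pole. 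The $(u/s)^{cL^2}$ decay survives square roots with $c$ halved, and choosing $L_0$ large finishes the proof. The symmetric statement with $e_t^1e_s^2$ is identical. The main obstacle, where I would spend the effort, is controlling the tilted walk-probability of $F^\complement$ near criticality.
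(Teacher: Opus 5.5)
Your treatment of the moment factor is fine. The bound $\mathbb{E}[X^p]\le \mathbb{E}[X^{2p-1}]^{1/2}\mathbb{E}[X]^{1/2}$, together with $X\le Z_s(x)Z_t(y)$, Cauchy--Schwarz and monotonicity, does produce $\mathbb{E}[Z_t(0)^{4p-1}]^{1/2}$.

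\textbf{The gap is the remaining factor.} You need
\[
\mathbb{E}[X]=E^{\otimes2}_{x,y}\bigl[e^{\beta_N^2 L_s}\mathbf 1_{F_{1,2}^\complement}\bigr]\le C(u/s)^{2cL^2},
\]
where $L_s$ is the collision local time up to $s$, and this is never proved. Your claim that the collision tilt only changes each dyadic scale by a constant factor ``when $\beta_N^2R$ stays bounded away from the pole'' does not apply here, for three reasons:
\begin{itemize}
\item $\beta_N^2R_s=\widetilde\beta_s^2\to1$ for $s$ comparable to $N$; it equals $1-\vartheta/\log N$ at $s=N$.
\item The total tilted mass $E^{\otimes2}_{x,y}[e^{\beta_N^2L_s}]=\mathbb{E}[Z_s(x)Z_s(y)]$ diverges as $s$ approaches $N$.
\item The tilt couples the two walks rather than acting scale by scale.
\end{itemize}
As you observe yourself, H\"older with a fixed $r>1$ on $e^{\beta_N^2L_s}$ pushes $r\beta_N^2R_s$ above $1$ and blows up. Letting $r\downarrow1$ with $N$ instead destroys the polynomial decay of $\mathrm P^{\otimes 2}(F_{1,2}^\complement)^{1-1/r}$. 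So the proposal stops exactly at the step that carries the content of the lemma.

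\textbf{The missing idea} is never to take the annealed mean of a two-walk quantity. Set $A_a:=\{2B(S^a)>\frac14\log_2(s/u)\}$. Then $F_{1,2}^\complement\subset A_1\cup A_2$, so $X\le X_1+X_2$ with $X_a:=E^{\otimes2}_{x,y}[e^1_se^2_t\mathbf 1_{A_a}]$, and $(X_1+X_2)^p\le 2^{p-1}(X_1^p+X_2^p)$. Each $A_a$ depends on one walk only, and the walks are independent for a fixed environment, so the quenched expectation factorizes. For instance, $X_2=Z_s(x)\,Y$ with
\[
Y:=E_y\bigl[e_t\mathbf 1\{2B(S)>\tfrac14\log_2(s/u)\}\bigr]\le Z_t(y).
\]
Hence $X_2^p\le Z_s(x)^pZ_t(y)^{p-1/2}Y^{1/2}$. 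Cauchy--Schwarz in $\mathbb{E}$ then leaves the factor $\mathbb{E}[Y]^{1/2}$. Because $\mathbb{E}[e_t]=1$ for every fixed path, $\mathbb{E}[Y]=\mathrm P_y\bigl(2B(S)>\frac14\log_2(s/u)\bigr)$ exactly, so no collision tilt ever appears.

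To finish, recentre at $y$, using $|x-y|/2\le L2^{l/2}$ whenever $2^l\ge u$. Lemma~\ref{lem:remainDiffusive} with $\delta=1/8$ then gives $C(u/s)^{cL^2}$. The other factor, $\mathbb{E}[Z_s(x)^{2p}Z_t(y)^{2p-1}]\le\mathbb{E}[Z_t(0)^{4p-1}]$, is handled as you did. The term with $A_1$, and the version with $e^1_te^2_s$, are symmetric.
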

\begin{proof}
    Set $A_a:=\{2B(S^a)>\frac14\log_2(s/u)\}$, $a=1,2$.
    Since $F_{1,2}^\complement\subset A_1\cup A_2$, the union bound and
    $(a+b)^p\leq2^{p-1}(a^p+b^p)$ give
    \begin{align*}
        \mathbb{E}  \left[(E_{x,y}^{\otimes 2}[e_s^1 e_t^2 \1\{F_{1,2}^\complement\}])^p\right]
        &\leq  2^{p-1}\sum_{a=1}^2
        \mathbb{E} \left[
        \left(E_{x,y}^{\otimes2}[e_s^1 e_t^2 \1\{A_a\}]\right)^p
        \right]\\
        &\leq {2^p}
        \mathbb{E} \Big[ Z_s(x)^{p}Z_t(y)^{p-(1/2)}
        E_{y}[e_t \1\{2 B(S)>\frac14 \log_2(s/u)\}]^{1/2}\Big].
        \end{align*}

    By Cauchy--Schwarz, H\"older's inequality, and translation invariance,
    \[
    \begin{aligned}
    &\mathbb{E} \Big[ Z_s(x)^{p}Z_t(y)^{p-(1/2)}
    E_y[e_t\mathbf1\{2B(S)>\tfrac14\log_2(s/u)\}]^{1/2}\Big]\\
    &\quad\le
    \mathbb{E} \Big[Z_s(x)^{2p}Z_t(y)^{2p-1}\Big]^{1/2}
    \mathbb{E} \left[E_y\big[e_t\mathbf1\{2B(S)>\tfrac14\log_2(s/u)\}\big]\right]^{1/2}\\
    &\quad\le
    \mathbb{E} \big[Z_t(0)^{4p-1}\big]^{1/2}
    \mathbb{E} \left[E_y\big[e_t\mathbf1\{2B(S)>\tfrac14\log_2(s/u)\}\big]\right]^{1/2},
    \end{aligned}
    \]
where we have used monotonicity of $t\mapsto \mathbb{E} [Z_t^p]$ in the last line.    Moreover,
    \[
    \mathbb{E} \left[E_y\big[e_t\mathbf1\{2B(S)>\tfrac14\log_2(s/u)\}\big]\right]
    =
    P_y\left(2B(S)>\frac14\log_2(s/u)\right).
    \]
    Therefore $\mathbb{E} [(E_{x,y}^{\otimes2}[e_s^1 e_t^2\1\{F_{1,2}^\complement\}])^p]$ is bounded by
    \[
    2^p
    \mathbb{E} \big[Z_t(0)^{4p-1}\big]^{1/2}
    P_y\left(2B(S)>\frac14\log_2(s/u)\right)^{1/2}.
    \]
    The probability
    term is bounded using \eqref{eq: F estimate} with translation invariance of simple random walks and $\left|\frac{x-y}{2}\right|\leq L 2^{l/2}$ for any $l\geq \log_2 u$. {This gives \eqref{eq-121026c}. The claim with the roles of $s,t$ interchanged follows by symmetry.}
\end{proof}

\subsection{Chaos expansion over dyadic blocks}
{We next} estimate 
\[
E_{x,y}^{\otimes 2}\left[e_t^1 e_s^2 \1\{F_{1,2}\}\right]-E_{y,x}^{\otimes 2}\left[e_t^1 e_s^2 \1\{F_{1,2}\}\right].
\]

Recall that
\begin{equation}\label{eq:define-u}
 \frac12 |x-y|^2 \leq   u \leq 2 |x-y|^2.
\end{equation}

Define
\[
  e_{u,t}(S) := \exp\!\left\{\sum_{n=u+1}^{t}
  \left(\beta_N \omega(n, S_n)-\frac{\beta_N^2}{2}\right)\right\}.
\]

We call the interval $(2^i,2^{i+1}]$ the dyadic block with index $i$.
For $s\ge u\ge 1$, define
\[
 \mathcal{J}_{u,s}:=
  \left\{i\in\mathbb Z:\,2^i\in [u,s]\right\}
\]
Let $M:=|\mathcal{J}_{u,s}|=\lfloor\log_2(s/u)\rfloor+1$.  We recall that 
$Q_*:=\left\lfloor \frac14\log_2(s/u)\right\rfloor .
$ 

For every
dyadic block $i\in\mathcal{J}_{u,s}$, define the dyadic weight:
\[
\begin{split}
    &d_i:=d_i(S):=
\exp\left(
\sum_{n\in (u,s]\cap(2^i,2^{i+1}]\cap\mathbb Z}
\left(\beta_N\omega(n,S_n)-\frac{\beta_N^2}{2}\right)
\right),
\end{split}
\]
and $\eta_i:=d_i-1$. For $\alpha\in\{1,2\}$, write $d_i^\alpha = d_i(S^\alpha)$ and $\eta_i^\alpha:=d_i^\alpha-1$, and set
\[
\widetilde\eta_i:=d_i^1d_i^2-1.
\]
Then
\begin{equation}
  \widetilde\eta_i
=(1+\eta_i^1)(1+\eta_i^2)-1
=\eta_i^1+\eta_i^2+\eta_i^1\eta_i^2.
\end{equation}

When considering two random walks, we use {the} shorthand notation:
\[
e_u^{\otimes2}:=e_u(S^1)e_u(S^2),
\qquad
e_{s,t}^\alpha:=e_{s,t}(S^\alpha).
\]

{Finally, write
\[
E_{a,b}^{\otimes 2}\left[e_t^1 e_s^2 \1\{F_{1,2}\}\right] =
{\mathrm E} _{a,b}^{\otimes 2}\left[e_u^{\otimes2} e_{s,t}^1
\prod_{i\in \mathcal J_{u,s}}d_i^1 d_i^2 \1\{F_{1,2}\}\right].
\]
The formula $\prod_{i \in \mathcal I} (1+\tilde \eta_i) = \sum_{I\subset \mathcal I} \prod_{i\in I}\tilde \eta_i$ yields
\[
E_{a,b}^{\otimes 2}\left[e_t^1 e_s^2 \1\{F_{1,2}\}\right] = \sum_{I\subset\mathcal{J}_{u,s}}
{\mathrm E} _{a,b}^{\otimes 2}\left[e_u^{\otimes2} e_{s,t}^1
\prod_{i\in I}\widetilde\eta_i\1\{F_{1,2}\}\right].
\]
We call this the chaos decomposition over dyadic blocks.
}

Define the contributions from large and small supports by
\begin{align*}
  R^{+}_{a,b}&:=
\sum_{\substack{I\subset\mathcal{J}_{u,s}\\ |I|>\varepsilon M}}
{\mathrm E} _{a,b}^{\otimes 2}\left[e_u^{\otimes2} e_{s,t}^1
\prod_{i\in I}\widetilde\eta_i\1\{F_{1,2}\}\right],\qquad
  R^{-}_{a,b}:=
\sum_{\substack{I\subset\mathcal{J}_{u,s}\\ |I|\leq \varepsilon M}}
{\mathrm E} _{a,b}^{\otimes 2}\left[e_u^{\otimes2} e_{s,t}^1
\prod_{i\in I}\widetilde\eta_i\1\{F_{1,2}\}\right],
\end{align*}
so that \(
E_{a,b}^{\otimes 2}\left[e_t^1 e_s^2 \1\{F_{1,2}\}\right] = R_{a,b}^+ + R_{a,b}^-.
\)

\subsection{Chaoses with large support are negligible}
\begin{lemma}\label{lem:step3}
Fix \(L>1\) and \(\varepsilon>0\). For any integer  \(p\geq1\) and
\(A>0\), there exists \(C<\infty\) such that, for all sufficiently
large \(N\), uniformly for \(x\neq y\in\mathbb Z^2\) and \(u,s,t\)
satisfying \eqref{eq: condition for ustxy},
\[
\mathbb{E} \left[\left|R^+_{x,y}\right|^p\right]
\leq
C \,(Q_*+1)
\mathbb{E} \left[Z_t(y)^{8p}\right]^{1/4}
\left(\frac{u}{s}\right)^A .
\]
\end{lemma}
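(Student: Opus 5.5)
The plan is to show that each dyadic block in the support $I$ contributes a factor of order $1/\log N$, while the number of admissible supports is only $2^M$. Since $|I|>\varepsilon M$ and $(u/s)^A\asymp 2^{-AM}$, this gives
$$(C_p/\log N)^{\varepsilon M}\,C_p^{M}\le 2^{-AM}\lesssim (u/s)^A$$
for $N$ large, with $L,\varepsilon,p,A$ fixed.

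\textbf{Step 1 (separate the outer weights).} Since $\omega$ restricted to times in $(0,u]$, $(u,s]$ and $(s,t]$ are independent, I condition on the walk positions $S^1_u,S^2_u$ and $S^1_s,S^2_s$ via the Markov property. I then apply Hölder to split $|R^+_{x,y}|^p$ into two parts. The first collects the outer weights $e_u^{\otimes 2}$ and $e^1_{s,t}$, and is controlled by $\mathbb E[Z_t(y)^{8p}]^{1/4}$ using translation invariance and the monotonicity of $t\mapsto\mathbb E[Z_t^q]$ from Lemma~\ref{lem: Markov for ZN}. The second is a $4p$-th (or $2p$-th) moment of a middle-block chaos
$$\mathcal R:=\sum_{|I|>\varepsilon M}{\mathrm E}\Big[\prod_{i\in I}\widetilde\eta_i\,\1\{\cdot\}\Big],$$
taken under the polymer-tilted path measure with endpoint weights normalized. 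The factor $Q_*+1$ enters here. I decompose $\1\{F_{1,2}\}$ according to the value $B(S^1)+B(S^2)=q\in\{0,\dots,Q_*\}$, or more finely according to the set of non-diffusive dyadic times. Each piece is a function of the walks alone and not of $\omega$. Paying a factor $Q_*+1$ via the triangle inequality therefore lets me treat a single piece with a path-measurable indicator.

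\textbf{Step 2 (moment computation by replicas).} Fix the paths. The variables $\widetilde\eta_i$ for distinct $i$ depend on disjoint time windows of $\omega$, and each has $\mathbb E_\omega[\widetilde\eta_i]=e^{\beta_N^2 L_i(S^1,S^2)}-1$, where $L_i$ is the collision local time of the two walks in block $i$. Expanding the $2p$-th moment of $\mathcal R$ introduces $4p$ replica walks. By independence across blocks, the environment expectation factorizes into a product over blocks. A block belonging to the support of some replica gives a nonzero contribution only if some pair of replica walks collides inside that block. That contribution is at most of order $\mathbb E[e^{C\beta_N^2 \mathcal L_i}-1]$, where $\mathcal L_i$ is the total pairwise collision local time in block $i$. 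Given the positions at time $2^i$, this quantity is uniformly bounded by $C_p\beta_N^2\le C_p/\log N$. I would prove this using Khas'minskii's lemma, since $\beta_N^2$ is small and the expected two-walk collision time in a dyadic block is $O(1)$ uniformly in the starting points. Iterating the Markov property block by block, each supported block costs $C_p/\log N$ and each unsupported block costs at most $C_p$. Summing over the at most $2^{4pM}$ choices of supports, each of size greater than $\varepsilon M$, then gives the bound $\bigl(C_p(\log N)^{-\varepsilon}\bigr)^{M}\le (u/s)^{A'}$ for any $A'$ once $N$ is large. Choosing $A'$ appropriately in terms of $A$ and the Hölder exponents yields the lemma.

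\textbf{Main obstacle.} The delicate point is Step 1: the outer weights $e_u^{\otimes2}e^1_{s,t}$ are not normalized and are correlated with the path law, so the middle chaos must be estimated under a tilted path measure rather than the plain random walk law. I would try first to handle this by writing everything as a sum over the positions at times $u$ and $s$ and using the fact that the environment on $(u,s]$ is independent of the outer weights. Then $\mathbb E|R^+|^p$ becomes a $p$-fold replica expectation in which the outer parts are bounded through Hölder, while the middle parts are averaged against arbitrary, but environment-independent, starting and ending configurations. It is exactly this independence that requires the per-block bound of Step 2 to hold uniformly in the starting points.
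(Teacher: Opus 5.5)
\textbf{There is a genuine gap in Step 2.} Your per-block estimate fails in the critical window. You claim that, given the positions at time $2^i$, a supported block costs at most $C_p\beta_N^2\le C_p/\log N$ uniformly in those positions, because the expected collision local time in a dyadic block is $O(1)$ uniformly.

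It is not. Suppose two interacting walks sit at the same site at time $2^i$, for instance right after a collision in block $i-1$; the collision weights favour exactly such configurations. Their expected number of collisions in $(2^i,2^{i+1}]$ is $\sum_{n\le 2^i}\mathrm P_0(S_{2n}=0)\approx R_{2^i}\approx i\log 2/\pi$. Averaging $e^{\beta_N^2 L_i}-1\ge \beta_N^2 L_i$ over paths therefore already gives at least $\beta_N^2 R_{2^i}\approx i/\log_2 N$, which is of order one for macroscopic blocks.

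For the same reason Khas'minskii's lemma gives nothing. With $\binom{4p}{2}$ interacting pairs, $\sup_z \beta_N^2\mathrm E_z[\mathcal L_i]<1$ holds only for $2^i\lesssim N^{1/\binom{4p}{2}}$.

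Even in the most favourable case (two walks, previous collision a time $v$ earlier), the correct cost of a new collision is $C\bigl(1/\log(v+1)+1/(1+\log(N/v))\bigr)$ (Lemma~\ref{lem:weighted-first-hit}). This is not $O(1/\log N)$ at scales near $1$ or $N$. So the factor $(C_p(\log N)^{-\varepsilon})^{M}$ is unfounded, and a $4p$-replica moment computation at criticality has no uniform per-block contraction to iterate.

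\textbf{What is missing.} You need two things. First, a mechanism forcing each new collision to occur at a time comparable to the block scale after the previous one. Second, a way to avoid high-replica computations at criticality altogether.

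The paper proceeds as follows.
\begin{itemize}
\item It bounds only $\mathbb E|R^+_{x,y}|$. It expands $\prod_{i\in I}\widetilde\eta_i$ in terms of $\eta_i^1,\eta_i^2$ and splits $\mathbf 1_{F_{1,2}}=\sum_q \mathbf 1_{H_q(S^1)}\mathbf 1_{G_q(S^2)}$, which is where $Q_*+1$ enters.
\item Using independence of the two walks and Cauchy--Schwarz, it reduces to $Q_+(K)\le \mathrm E[e^{I_t}\mathbf 1_{\mathcal A_K}]$. This is a two-replica quantity in which the outer weights $e_u,e_{s,t}$ simply become collision weights, so the separation you attempt in Step 1 is unnecessary.
\item It discards the $O_D(1)$ blocks with $2^i<2e^D$ or $2^i>Ne^{-D}$ and keeps only every other block.
\item It applies the strong Markov property at collision times together with Lemma~\ref{lem:weighted-first-hit}, gaining a factor $C_2/D$ per kept block. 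Choosing $D=D(A,\varepsilon)$ large beats the $4^M$ combinatorial count.
\item Higher moments then follow by interpolation, $\mathbb E|R^+|^p\le \mathbb E[|R^+|^{4p-2}]^{1/4}\,\mathbb E[|R^+|]^{1/2}$, combined with the crude bound $\mathbb E|R^+|^q\le 2^{2M}\mathbb E[Z_t^{2q}]$.
\end{itemize}
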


\begin{proof}
Assume \(x\neq y\). 
Removing the support restriction and the path indicators, by $\prod_{i\in I}\tilde \eta_i=\sum_{I'\subset I} \prod_{i\in I'}(-1)^{|I\setminus I'|}d_i^{\otimes 2}$, we have 
\[
\begin{aligned}
\mathbb{E} \left[|R^+_{x,y}|^q\right]
&\leq 2^{2M}\mathbb{E} \left[(Z_t(x)Z_s(y))^q\right] \\
&\leq 2^{2M}
\mathbb{E} \left[Z_t(x)^{2q}\right]^{1/2}
\mathbb{E} \left[Z_s(y)^{2q}\right]^{1/2}
\leq 2^{2M} \mathbb{E} \left[Z_t(y)^{2q}\right].
\end{aligned}
\]

For every fixed \(\delta>0\), if
\(|x-y|/\sqrt{s}\geq\delta\), the claim follows from this moment bound
and H\"older's inequality, after increasing \(C\). We may therefore
assume
\(
\frac{|x-y|}{\sqrt{s}}<\delta,
\) 
with \(\delta>0\) sufficiently small.  
By \eqref{eq:define-u}, this implies that \(s/u\) and $|\mathcal J_{u,s}|$ are larger than a fixed large 
constant depending on $\varepsilon$.

For \(K\subset\mathcal J_{u,s}\), set
\[
Q_+(K):=
\mathbb{E} \left[
\left(
\mathrm {E}\left[
e_u e_{s,t}\prod_{i\in K}\eta_i
\right]
\right)^2
\right].
\]
By Fubini--Tonelli, for every path event \(H\) and \(r\in\{s,t\}\),
\[
\mathbb{E} \left[
\left(
\mathrm {E}\left[
e_u e_{s,r}\prod_{i\in K}\eta_i\mathbf 1_H
\right]
\right)^2
\right]
\leq Q_+(K).
\]

Fix
\(
(a,b)\in\{(x,y),(y,x)\}.
\) 
Using the decompositions
\[
\prod_{i\in I}\widetilde\eta_i
=
\sum_{\substack{
I_1,I_2,I_3\ \mathrm{disjoint}\\
I_1\cup I_2\cup I_3=I}}
\left(\prod_{i\in I_1\cup I_3}\eta_i^1\right)
\left(\prod_{i\in I_2\cup I_3}\eta_i^2\right),
\]
and
\[
\mathbf 1_{F_{1,2}}
=
\sum_{q=0}^{Q_*}
\mathbf 1_{H_q(S^1)}
\mathbf 1_{G_q(S^2)},
\]
where
\[
H_q:=\{B(S)\leq Q_*-q\},
\qquad
G_q:=\{B(S)=q\}.
\]
{The} independence of the walks followed by the Cauchy--Schwarz inequality
give
\begin{equation}\label{eq:Rplus-support-sum-new}
\mathbb {E}\left[|R^+_{x,y}|\right]
\leq
(Q_*+1)
\sum_{\substack{
I_1,I_2,I_3\subset\mathcal J_{u,s}\ \mathrm{disjoint}\\
|I_1|+|I_2|+|I_3|>\varepsilon M}}
Q_+(I_1\cup I_3)^{1/2}
Q_+(I_2\cup I_3)^{1/2}.
\end{equation}

Let $K\subset \mathcal J_{u,s}$ with $|K|> \varepsilon M/2$. We next estimate \(Q_+(K)\). Let \(S,\widetilde S\) be independent walks
started from the same point, and write
\[
I_{a,b}
:=
\beta_N^2\sum_{n=a+1}^{b}
\mathbf 1_{\{S_n=\widetilde S_n\}},
\qquad
I_t:=I_{0,t},
\]
and
\[
\mathcal A_K
:=
\bigcap_{i\in K}
\left\{
\exists n\in(2^i,2^{i+1}]
:
S_n=\widetilde S_n
\right\}.
\]
By Fubini's theorem,
\[
Q_+(K)
\leq
{\mathrm E} \left[e^{I_t}\mathbf 1_{\mathcal A_K}\right].
\]

Fix \(B=B(A,\varepsilon)>0\) and choose \(D\geq1\) sufficiently large, depending only on
\(B\). Both of these constants will be chosen later. Discard from \(K\) the indices satisfying 
\(2^i<2e^D\),  \(2^i>Ne^{-D}\), or \(2^{i+1}>s\). The number of such indices is bounded by a constant depending only on $D$. Let \(K_*\) be the larger of the even and odd subsets
of the remaining indices. Then, since we assumed that $\varepsilon M$ is sufficiently large, we have 
\[
|K_*|\geq \frac{|K|}{4}\geq \frac{\varepsilon M}{8}.
\]

For \(i\in K_*\), let
\[
\tau_i:=\inf\{n>2^i:S_n=\widetilde S_n\}.
\]
Consecutive indices in \(K_*\) differ by at least two. Suppose \(i<i'\)
are consecutive  and \(\tau_i=n\leq2^{i+1}\). For the simple random walks restarted at time \(\tau_i\), let
\[
\rho:=\inf\{r>2^{i'}-n:S_r=\widetilde S_r\}.
\]
On the event \(\{\rho\leq2^{i'+1}-n\}\),
\[
\beta_N^2
\sum_{r=1}^{\rho}
\mathbf 1_{\{S_r=\widetilde S_r\}}
=
\beta_N^2
\sum_{r=1}^{2^{i'}-n}
\mathbf 1_{\{S_r=\widetilde S_r\}}
+\beta_N^2.
\]
Moreover,
\[
e^D\leq2^{i'-1}\leq 2^{i'}-n \leq2^{i'}\leq Ne^{-D},
\qquad
2^{i'+1}-n-(2^{i'}-n)=2^{i'}\leq2(2^{i'}-n).
\]
Let $v:=2^{i'}-n$. Hence, Lemma~\ref{lem:weighted-first-hit} with
\(B=2\) and \(A=D\), yields
\[
\begin{aligned}
&{\mathrm E} _0\left[
\exp\left(
\beta_N^2
\sum_{r=1}^{\rho}
\mathbf 1_{\{S_r=\widetilde S_r\}}
\right)
\mathbf 1_{\{\rho\leq2^{i'+1}-n\}}
\right]\\
&\qquad\leq
e^{\beta_N^2}
{\mathrm E} _0\left[
\exp\left(
\beta_N^2
\sum_{r=1}^{v}
\mathbf 1_{\{S_r=\widetilde S_r\}}
\right)
\mathbf 1_{\{
\exists r\in(v,3 v]:
S_r=\widetilde S_r
\}}
\right] \leq
\frac{C_2}{D},
\end{aligned}
\]
where \(C_2\)  is independent of \(D\).

Let \(i_0:=\min K_*\) and \(v:=2^{i_0}\).
Since
\(
e^D\leq v\leq Ne^{-D},
\) 
Lemma~\ref{lem:weighted-first-hit} gives
\[
{\mathrm E} _0\left[
e^{I_{\tau_{i_0}}}
\mathbf 1_{\{\tau_{i_0}\leq2v\}}
\right]
=
e^{\beta_N^2}
{\mathrm E} _0\left[
e^{I_v}
\mathbf 1_{\{
\exists n\in(v,2v]:
S_n=\widetilde S_n
\}}
\right]
\leq
\frac{C_2}{D}.
\]

Order \(K_*=\{i_0<\cdots<i_m\}\). Applying the strong Markov property successively at \(\tau_{i_0},\ldots,\tau_{i_m}\), each transition to the next selected block contributes at most \(C_2/D\) by the estimate above, while the contribution after the last collision is bounded by \(\mathbb{E} [Z_t^2]\); hence
\[
Q_+(K)\le C\,\mathbb{E} [Z_t^2]\left(\frac{C_2}{D}\right)^{|K_*|},
\] 
with some constant  $C$ that may depend on $D$.
 
 When \(|K|< \varepsilon M/2\), we use the bound
\[
Q_+(K)\leq {\mathrm E} [e^{I_t}]
=\mathbb{E} [Z_t^2].
\]

For a triple {as} in \eqref{eq:Rplus-support-sum-new}, set
\[
K_1:=I_1\cup I_3,
\qquad
K_2:=I_2\cup I_3.
\]
Then
\[
|K_1|+|K_2|
=
|I_1|+|I_2|+2|I_3|
>
\varepsilon M.
\]
Together with the bound \(4^M\) on
the number of disjoint triples, if we take $D$ sufficiently large depending on $B$, we obtain 
\[
\mathbb{E} {\left[|R^+_{x,y}|\right]}
\leq
4^M C(Q_*+1)\mathbb{E} [Z_t^2]
\exp\left(
 - B\varepsilon M
\right).
\]

Choosing \(B\) sufficiently large depending on $A$ and $\varepsilon$, since
\(
M\geq\log_2(s/u),
\) 
we obtain
\begin{equation}\label{eq:Rplus-L1-improved}
\mathbb{E} {\left[|R^+_{x,y}|\right]}
\leq
C\,2^{-4M}\,(Q_*+1)\mathbb{E} [Z_t^2]
\left(\frac{u}{s}\right)^{2A} .
\end{equation}
This gives 
\[
\begin{aligned}
\mathbb{E} \left[|R^+_{x,y}|^p\right]
&\leq
\mathbb{E} \left[|R^+_{x,y}|^{4p-2}\right]^{1/4}
\mathbb{E} \left[|R^+_{x,y}|\right]^{1/2}\\
&\leq
C\,(Q_*+1)
\mathbb{E} \left[Z_t(y)^{8p-4}\right]^{1/4}
\mathbb{E} [Z_t(y)^2]^{1/2}
\left(\frac{u}{s}\right)^{A}.
\end{aligned}
\]
By H\"older's inequality, we have 
\[
\mathbb{E} [Z_t^2]^{1/2}
\leq
\mathbb{E} [Z_t^{8p}]^{1/(8p)},
\quad 
\mathbb{E} [Z_t^{8p-4}]^{1/4}
\leq
\mathbb{E} [Z_t^{8p}]^{
\frac{4p-2}{16p}
}.
\]
Since
\(
\frac{1}{8p}
+
\frac{4p-2}{16p}
=
\frac14,
\) 
it follows that
\[
\mathbb{E} [Z_t^{8p-4}]^{1/4}
\mathbb{E} [Z_t^2]^{1/2}
\leq
\mathbb{E} [Z_t^{8p}]^{1/4}.
\] 
Therefore
\[
\mathbb{E} \left[|R^+_{x,y}|^p\right]
\leq
C(Q_*+1)
\mathbb{E} \left[Z_t(y)^{8p}\right]^{1/4}
\left(\frac{u}{s}\right)^{A}.
\]
This proves the claim.
\end{proof}
\subsection{Swapping chaoses with small support}\label{subsec:small-support}
In this subsection, we estimate the difference \(R^-_{x,y}-R^-_{y,x}\). We restrict to terms where there are at least \(|\mathcal J_{u,s}|/8\) blocks in which the random walks traverse diffusively {(defined above Lemma \ref{lem:remainDiffusive})} but no chaos is present. We call such blocks \emph{diffusive, no-chaos blocks}. The use of these blocks is inspired by the separation of the environments near the starting and ending points in the local limit theorem arguments of Sinai~\cite{Sinai95} and Junk~\cite{Junk25}.  The key observation is that, in each such block, there is a uniformly positive probability of exchanging the roles of the starting points \(x\) and \(y\). Once such an exchange occurs, the corresponding contribution to \(R^-_{x,y}-R^-_{y,x}\) cancels. By the swapping lemma, Lemma~\ref{lem: swapping lemma}, the contribution from paths for which no exchange occurs in any of the diffusive, no-chaos blocks is therefore bounded by
\[
(1-c)^{|\mathcal J_{u,s}|/8}\leq (u/s)^{\alpha}
\]
for some \(c,\alpha>0\). 
  
{Turning to the actual argument, define}
\begin{align}\label{eq: def of bar Z}
  \bar Z_r^{||}(x) := \sum_{J\subset \mathcal{J}_{u,s}, |J|\leq \varepsilon M} {\mathrm E} _x\left[e_u  e_{s,r} \prod_{j\in J} d_j\right].
\end{align}

\begin{lemma}\label{lem: swapping small-chaos}
For every $L>1$, there exist $\alpha,C>0$ and $\varepsilon_0 \in (0,1/16)$ such that, for all  $\varepsilon\in (0,\varepsilon_0)$, the following holds. Let $x,y\in\mathbb Z^2$
satisfy $x-y\in4\mathbb Z^2$, and let $u,s,t$ satisfy \eqref{eq: condition for ustxy}.
Then, {almost surely,}
\begin{align*}
&  \left|R^{-}_{x,y}-R^{-}_{y,x}\right| \;\le\; C\bigl(\bar Z_t^{||}(x)\bar Z^{||}_s(y)+\bar Z_t^{||}(y)\bar Z^{||}_s(x)\bigr)\, \left(\frac{u}{{s}}\right)^{\alpha}.
\end{align*}
\end{lemma}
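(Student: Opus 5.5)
The strategy is a pathwise coupling, with the environment $\omega$ fixed throughout. For each $I\subset\mathcal J_{u,s}$ with $|I|\le\varepsilon M$ we compare the summands of $R^-_{x,y}$ and $R^-_{y,x}$ for that same $I$. Since $|I|\le \varepsilon M$ and, on $F_{1,2}$, at most $Q_*\le \frac14\log_2(s/u)$ dyadic indices are non-diffusive for either walk, at least $M-\varepsilon M-Q_*-O(1)\ge M/8$ blocks $i\in\mathcal J_{u,s}\setminus I$ are diffusive for both walks (relative to the midpoint $(x+y)/2$) and carry no chaos; here we take $\varepsilon_0<1/16$ and $M$ large. The small-$M$ case is trivial after enlarging $C$.

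In such a block the integrand depends on the pair $(S^1,S^2)$ only through the factors outside the block. Inside the block the weight is $1$, because $\widetilde\eta_i$ does not appear for $i\notin I$. The only coupling to the environment there is through $e_u^{\otimes2}$ (times $\le u$), $e^1_{s,t}$ (times $>s$), and the blocks of $I$.

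The key step is a swap. Inside a no-chaos block $(2^i,2^{i+1}]$ we exchange the continuation paths: after some time $n$ in the block at which $S^1_n=S^2_n$, we swap the futures of the two walks. The condition $x-y\in4\mathbb Z^2$ makes parity compatible, so the walks can indeed meet. If the walks meet in a no-chaos block, the reflection map that exchanges the post-meeting segments is measure-preserving for $\mathrm P_{x,y}^{\otimes2}$. It leaves the no-chaos block weight unchanged and also preserves the diffusive indicators (I would check this carefully, possibly replacing the exact meeting by a local limit theorem coupling as in Sinai/Junk). It maps configurations started from $(x,y)$ to configurations with the same future weights, namely $e^1_{s,t}$ attached to the path that now ends where $S^2$ used to end. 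Summing, the contributions to $R^-_{x,y}-R^-_{y,x}$ from paths that meet in some diffusive no-chaos block cancel exactly. What remains is the contribution of pairs which, in each of the at least $M/8$ such blocks, fail to meet.

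The remaining contribution is bounded by the swapping lemma, Lemma~\ref{lem: swapping lemma}. By the Markov property at block endpoints, a walk pair that is diffusive at scale $2^{i}$ (so $|S^\alpha(2^i)-\frac{x+y}2|\le 2L2^{i/2}$) meets within $(2^i,2^{i+1}]$ with conditional probability at least $c(L)>0$, uniformly in the past. This is a two-dimensional local CLT or recurrence-at-scale estimate. Sequentially conditioning over the chosen blocks, the no-meeting contribution is bounded by $(1-c)^{M/8}$ times the same expectation without the no-meeting indicator. There I drop $F_{1,2}$ and bound $\prod_{i\in I}\widetilde\eta_i$ by expanding it into products of $d_i^1d_i^2$ with signs. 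Summing over $I$ with $|I|\le\varepsilon M$ reproduces $\bar Z_t^{||}(x)\bar Z_s^{||}(y)+\bar Z_t^{||}(y)\bar Z_s^{||}(x)$, as the product of two independent single-walk expectations. Finally, $(1-c)^{M/8}\le (u/s)^\alpha$ with $\alpha=c'/8$.

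The main obstacle is that the conditioning uses a uniform positive meeting probability while the integrand carries environment weights. Those weights are path-dependent even across no-chaos blocks, because the blocks of $I$ before and after depend on positions. This has to be handled by conditioning on positions at the block endpoints, so that the weights factor through those positions, and then bounding the weighted no-meeting kernel by $(1-c)$ times the unrestricted kernel. I would try to extract exactly this factorization from the swapping lemma.
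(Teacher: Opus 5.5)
\paragraph{Gap: the per-block meeting estimate fails in two dimensions.} Your argument rests on the claim that two diffusive walks meet within $(2^i,2^{i+1}]$ with probability at least $c(L)>0$, which you attribute to a local CLT or recurrence-at-scale estimate. This is false in dimension two. If both positions at time $2^i$ lie within $2L2^{i/2}$ of $(x+y)/2$, the walks meet during $(2^i,2^{i+1}]$ only with probability of order $1/\log 2^i\asymp 1/i$. This is exactly \eqref{eq:future-collision}: from distance $\asymp 2^{i/2}$ the difference walk makes $O(1)$ expected visits to $0$ in the block, but $\asymp i$ visits once it has hit $0$. Consequently the meet-and-swap cancellation leaves the mass of pairs that never meet during $[u,s]$, which is genuinely of order $\log u/\log s$. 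For $|x-y|=4$ this is $(\log s)^{-1}$, nowhere near $(u/s)^{\alpha}$. The polynomial rate is exactly what the chaining behind Proposition~\ref{prop: prop moment bound} consumes, and regrouping blocks does not help. Your parenthetical about replacing exact meeting by an LCLT coupling as in Sinai/Junk is the right idea. But it is not a technical refinement of the meeting argument; it is the whole mechanism, and it comes without a pathwise bijection.

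\paragraph{What the paper uses instead.} The paper minorizes the two-walk kernel over each block $\ell\in I_U$, i.e.\ a block with no chaos, $2^\ell\ge u$, and all four dyadic endpoints within $12L2^{\ell/2}$ of $z_\star$. On such a block the local CLT gives $f_\ell(x_\ell,x_{\ell+1})f_\ell(y_\ell,y_{\ell+1})\ge\kappa_L\,\rho_\ell(x_{\ell+1})\rho_\ell(y_{\ell+1})$, where $\rho_\ell=p_{2^\ell}(z_\star,\cdot)$ does not depend on the starting pair. Here $x-y\in4\Zbb^2$ is used to make $z_\star$ a lattice point of the right parity. So each such block forgets the labels of the two walks with mass at least $c_0=\kappa_L/(2K_L)$, uniformly in $\ell$; this is the constant-per-block mechanism your argument lacks.

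Write $\mathsf F_\ell=\mathsf K_\ell+\Gamma_\ell$, and let $\tau_q$ exchange all dyadic positions of the two walks up to index $q$. The identity of Lemma~\ref{lem: swapping lemma} then cancels every inclusion--exclusion term containing some $\mathsf K_\ell$, by pairing $U$ with $\tau_{\max S}U$. What remains is $\prod_{\ell\in I_U}\Gamma_\ell\le(1-c_0)^{M/8}\prod_{\ell\in I_U}\mathsf F_\ell$.

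Two conditions make this work. The identity needs the block factors to be invariant under $\tau_q$ for $\ell\neq q$. This holds for the symmetric weights $e_u^{\otimes2}$ and $d_j^{\otimes2}$, but not for $e^1_{s,t}$. The subsequent bound needs the factors to be nonnegative, which fails for $\widetilde\eta_i$. This is why one first expands $\prod_{i\in I}\widetilde\eta_i$ into $\pm\prod_{j\in J}d_j^{\otimes2}$ and works with fixed endpoints $(z,w)$ at time $s$, attaching $e^1_{s,t}$ only afterwards.

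Finally, summing over pairs $J\subset I$ with $|I|\le\varepsilon M$ does not simply reproduce $\bar Z^{||}$. It costs a factor $\sum_{r\le\varepsilon M}\binom{M}{r}2^r\le e^{2\varepsilon\log(1/\varepsilon)M}$. Hence $\varepsilon_0$ must be chosen small in terms of $c_0$, and therefore of $L$, so that $(1-c_0)^{M/8}$ absorbs it.
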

\begin{proof} 
Write $d_i^{\otimes2}:=d_i^1d_i^2$. Recall that $M=|\mathcal{J}_{u,s}|=\lfloor\log_2(s/u)\rfloor+1$. We take $M_0=M_0(L)$ sufficiently large and $\varepsilon_0=\varepsilon_0(L)$ sufficiently small to be chosen later. We assume that $\varepsilon<\varepsilon_0$. After increasing $C$, we may assume $M>M_0$. {Set $z_\star:=(x+y)/2$. Since $x-y\in 4\mathbb Z^2$, we have
$z_\star=x+(y-x)/2\in\mathbb Z^2$, and $z_\star$ has the same parity as
$x$.}

  {\textit{Step 1.} } We fix
$J\subset \mathcal{J}_{u,s}$ with $|J|\le\lfloor\varepsilon M\rfloor$.
For $z,w\in\mathbb Z^2$, let
\begin{equation}
\label{eq-161026a}
Z_{x,y}^J(z,w)
:=
{\mathrm E} ^{\otimes2}_{x,y}\left[
e_u^{\otimes2}\prod_{j\in J}d_j^{\otimes2}
 \mathbf1_{\{F_{1,2},\,S_s^1=z,\,S_s^2=w\}}\right].
\end{equation}

We claim that there exists $c_0\in (0,1)$ such that for all $z,w\in\mathbb Z^2$, almost surely,
\begin{equation}\label{eq:fixed-endpoint-swap}
\left|Z_{x,y}^J(z,w)-Z_{y,x}^J(z,w)\right|
\le
(1-c_0)^{M/8}
\left(Z_{x,y}^J(z,w)+Z_{y,x}^J(z,w)\right).
\end{equation}

Set
\[
 \iota(n):=\left\lceil \log_2 n\right\rceil-1.
\]
Note that $2^{\iota(s)}<s\leq2^{\iota(s)+1}$.
For integers $0\leq a<b$ and
$v,w\in\mathbb Z^2$, set
\[
Z_{(a,b]}(v,w):=
{\mathrm E} \left[
\exp\left(
\sum_{n=a+1}^{b}
\left(\beta_N\omega(n,S_n)-\frac{\beta_N^2}{2}\right)
\right)
\mathbf1_{\{S_b=w\}}
\mid S_a=v
\right].
\]
{Recall that}
$p_n(x,y):=\mathrm P_x(S_n=y)=p_n(y-x)$. Put $x_\ell:=S^1_{2^\ell}$, $y_\ell:=S^2_{2^\ell}$,
$x_{\iota(s)+1}:=z$, and $y_{\iota(s)+1}:=w$. Let $\mathcal A_{z,w}$ be the finite
set of sequences
\[
U=(x_0,y_0,\ldots,x_{\iota(s)},y_{\iota(s)})
\]
satisfying
$$
x_\ell,y_\ell\in
\{v\in\mathbb Z^2:p_{2^\ell}(x,v)+p_{2^\ell}(y,v)>0\},
\qquad 0\leq\ell\leq \iota(s),
$$ 
and (recalling that $Q_*= \left\lfloor \frac14\log_2(s/u) \right\rfloor$)
\[
\sum_{q\in[\log_2u,\log_2s]\cap\mathbb Z}
\left(
\mathbf1_{\{|x_q-{z_\star}|>2L2^{q/2}\}}
+\mathbf1_{\{|y_q-{z_\star}|>2L2^{q/2}\}}
\right)
\leq Q_*.
\]
Note that $\mathcal A_{z,w}$ encodes the event $F_{1,2}$. For
$U\in\mathcal A_{z,w}$, define
\[
I_U:=
\left\{
\ell\in\{Q_*,\ldots,\iota(s)\}:
\begin{array}{l}
    \ell \notin J,\, 2^\ell \ge u,\, 2^{\ell+1}\leq s,\\
      \sum_{v\in\{x_\ell,y_\ell,x_{\ell+1},y_{\ell+1}\}}
 \left|v-{z_\star}\right|\le 12 L 2^{\ell/2}
 \end{array}
 \right\}.
\]
Since we assumed that $M$ is sufficiently large and $|J|\leq \varepsilon M$, we have
\[
|I_U|\ge M - |J| - 3Q_* -1\geq \frac M8.
\]

 For $q\in\{0,\ldots,\iota(s)\}$, define  the swapping operator $\tau_q$ {up to index $q$ by the following: for for $U=(x_0,y_0,\ldots,x_{\iota(s)},y_{\iota(s)})$ with $(x_\ell)_{\ell=0}^{\iota(s)},$ and $ (y_\ell)_{\ell=0}^{\iota(s)}$,}
$$\tau_q(U) = (y_0,x_0,\dots,y_q,x_q,x_{q+1},y_{q+1},\dots, x_{\iota(s)},y_{\iota(s)})
 $$
Then
\[
\tau_q\mathcal A_{z,w}=\mathcal A_{z,w},
\qquad I_{\tau_qU}=I_U.
\]

For $0\leq\ell\leq \iota(s)$, {let  
$\rho_\ell(v):=p_{2^\ell}(z_\star,v)$ and} set, {for $x',y'\in \mathbb Z^2$,}
\[
f_\ell(x',y'):=
\begin{cases}
Z_{(2^\ell,2^{\ell+1}\wedge s]}({x',y'}),
&\ell\in J\ \text{or}\ 2^{\ell}< u,\\
p_{(2^{\ell+1}\wedge s)-2^\ell}({x',y'}),
&\ell\notin J\ \text{and}\ 2^\ell\ge u.
\end{cases}
\]
 The  local central limit theorem for simple random walks {\cite[Theorem~2.1.3]{Lawler-Limic}} yields that there exist constants $M_0=M_0(L)>0$ and  
$0<\kappa_L\le K_L<\infty$ such that, for any $M>M_0$ and for $\ell\in I_U$,
\[
\kappa_L \rho_\ell(x_{\ell+1})\rho_\ell(y_{\ell+1})\le f_\ell(x_\ell,x_{\ell+1})f_\ell(y_\ell,y_{\ell+1})
\le
K_L \rho_\ell(x_{\ell+1})\rho_\ell(y_{\ell+1}).
\]
Set
\[
r_\ell:=
\begin{cases}
\sqrt{({\kappa_L}/2)}\,\rho_\ell,
&\ell\notin J,\ 2^\ell\ge u,\ 2^{\ell+1}\leq s,\\
0,&\text{otherwise}.
\end{cases}
\qquad c_0:=\kappa_L/(2 K_L).
\]
{Then,} for any $\ell\in I_U$,
\begin{align}
  0&\le
f_\ell(x_\ell,x_{\ell+1})f_\ell(y_\ell,y_{\ell+1})
-r_\ell(x_{\ell+1})r_\ell(y_{\ell+1})\notag\\
&\le
(1-c_0)f_\ell(x_\ell,x_{\ell+1})f_\ell(y_\ell,y_{\ell+1}).\label{eq: comparison between f and rho}
\end{align}

We write $Z^J_{x,y}$ and $Z^J_{y,x}$  {of \eqref{eq-161026a}} as 
\[
Z_{x,y}^J(z,w)=
\sum_{U\in\mathcal A_{z,w}}\mathsf A(U)
\prod_{\ell=0}^{\iota(s)}\mathsf F_\ell(U),
\qquad
Z_{y,x}^J(z,w)=
\sum_{U\in\mathcal A_{z,w}}\mathsf B(U)
\prod_{\ell=0}^{\iota(s)}\mathsf F_\ell(U),
\]
where
\[
\mathsf A(U):=Z_{(0,1]}(x,x_0)Z_{(0,1]}(y,y_0),\qquad
\mathsf B(U):=Z_{(0,1]}(y,x_0)Z_{(0,1]}(x,y_0),
\]
\[
\mathsf F_\ell(U):=
f_\ell(x_\ell,x_{\ell+1})f_\ell(y_\ell,y_{\ell+1}),
\qquad
\mathsf K_\ell(U):=
r_\ell(x_{\ell+1})r_\ell(y_{\ell+1}).
\]
Note that \[
(\mathsf A-\mathsf B)\circ\tau_q=-(\mathsf A-\mathsf B),\qquad
\mathsf F_\ell\circ\tau_q=\mathsf F_\ell\quad(\ell\ne q),
\qquad
\mathsf K_\ell\circ\tau_q=\mathsf K_\ell\quad(\ell\le q).
\] 
We define 
\[
\Gamma_\ell(U)
:=
f_\ell(x_\ell,x_{\ell+1})f_\ell(y_\ell,y_{\ell+1})
-r_\ell(x_{\ell+1})r_\ell(y_{\ell+1}).
\]
Lemma~\ref{lem: swapping lemma} with $\mathcal A = \mathcal A_{z,w}$ and $m=\iota(s)$  tells us that 
\begin{align*}
   & \sum_{U\in\mathcal A_{z,w}} \mathsf A(U)\prod_{\ell=0}^{\iota(s)} \mathsf F_\ell(U)
-\sum_{U\in\mathcal A_{z,w}} \mathsf B(U)\prod_{\ell=0}^{\iota(s)} \mathsf F_\ell(U)\\
&\qquad=
\sum_{U\in\mathcal A_{z,w}} \mathsf A(U)
\prod_{\ell\in I_U}\Gamma_\ell(U)
\prod_{\ell\in\{0,\dots,\iota(s)\}\setminus I_U} \mathsf F_\ell(U) \\
&\qquad \qquad -
\sum_{U\in\mathcal A_{z,w}} \mathsf B(U)
\prod_{\ell\in I_U}\Gamma_\ell(U)
\prod_{\ell\in\{0,\dots,\iota(s)\}\setminus I_U} \mathsf F_\ell(U).
\end{align*}

All factors on the right-hand side  are
nonnegative. For each $U\in\mathcal A_{z,w}$, the factor $\prod_{\ell\in I_U}\Gamma_\ell(U)$ is bounded by
$(1-c_0)^{|I_U|}\prod_{\ell\in I_U} \mathsf F_\ell(U)$ with some constant $c_0>0$ due to \eqref{eq: comparison between f and rho}. Therefore, since
$|I_U|\geq M/8$, we obtain \eqref{eq:fixed-endpoint-swap}.

{\textit{Step 2.} }
For $I\subset\mathcal{J}_{u,s}$, define
\[
\Delta_I:=
{\mathrm E} ^{\otimes2}_{x,y}\left[e_u^{\otimes2}e_{s,t}^1
\Bigl(\prod_{i\in I}\widetilde\eta_i\Bigr)
\mathbf1_{F_{1,2}}\right]
-
{\mathrm E} ^{\otimes2}_{y,x}\left[e_u^{\otimes2}e_{s,t}^1
\Bigl(\prod_{i\in I}\widetilde\eta_i\Bigr)
\mathbf1_{F_{1,2}}\right].
\]
Note that we did not apply the swapping lemma, Lemma~\ref{lem: swapping lemma}, directly to $\Delta_I$. Indeed, since $\tilde{\eta}_i$ may be negative, {if we did that  the resulting bound would involve}  $|\tilde{\eta
}_i|$, which is too costly for our purposes.

The expansion above gives
\[
\Delta_I=\sum_{J\subset I}(-1)^{|I|-|J|}\tilde{\Delta}^J,
\]
where
\[
\tilde{\Delta}^J:=
{\mathrm E} ^{\otimes2}_{x,y}\left[e_u^{\otimes2}\Bigl(\prod_{j\in J}d_j^{\otimes2}\Bigr)
e_{s,t}^1\mathbf1_{F_{1,2}}\right]
-{\mathrm E} ^{\otimes2}_{y,x}\left[e_u^{\otimes2}\Bigl(\prod_{j\in J}d_j^{\otimes2}\Bigr)
e_{s,t}^1\mathbf1_{F_{1,2}}\right].
\]
We  apply
\eqref{eq:fixed-endpoint-swap} to every $\tilde{\Delta}^J$.  After summing over the
endpoints at time $s$, the two
positive terms in \eqref{eq:fixed-endpoint-swap} are bounded by the
corresponding products defining $\bar Z^{||}$.  Moreover,
$\widetilde\eta_i=d_i^{\otimes2}-1$, so the expansion of a fixed $I$
produces $2^{|I|}$ terms $\tilde{\Delta}^J$. Consequently,
\begin{align*}
\left|R^{-}_{x,y}-R^{-}_{y,x}\right|&\leq \sum_{\substack{I\subset\mathcal{J}_{u,s}\\ |I|\le\varepsilon M}}|\Delta_I|
\le \sum_{\substack{I\subset\mathcal{J}_{u,s}\\ |I|\le\varepsilon M}}\sum_{J\subset I}|\tilde{\Delta}^J| \\
&\le C\bigl(\bar Z_t^{||}(x)\bar Z_s^{||}(y)
+\bar Z_t^{||}(y)\bar Z_s^{||}(x)\bigr)
(1-c_0)^{M/8}
\sum_{r\leq\varepsilon M}\binom{M}{r}2^r.
\end{align*} 
By Stirling's formula, {for all $\varepsilon$ sufficiently,}
\[
\sum_{r\le \varepsilon M}\binom{M}{r}2^r 
\le \exp({2}\varepsilon \log(1/\varepsilon)\,M).
\]
Choose $\varepsilon>0$ so that
${2}\varepsilon\log(1/\varepsilon)<-\frac1{16}\log(1-c_0)$. The product of the
combinatorial factor and the contraction factor is then at most
$Ce^{\frac1{16}\log(1-c_0)M}$. Since
\(
M\ge \log_2(s/u),
\)   this 
 gives
\[
e^{\frac1{16}\log(1-c_0)M}\le \left(\frac{u}{s}\right)^\alpha,
\]
where we set $\alpha:=-(16\log2)^{-1} \log (1-c_0)$. This proves the lemma.
\end{proof}
{We also need a control on the moments of $\bar Z_t^{||}(x)$, see \eqref{eq: def of bar Z}.}
\begin{lemma} \label{lem:pmomentZparallel}
There exists $C>1$ such that, for every fixed  integer $p\geq2$ and for all sufficiently large $N$, $x,y\in {\rm B}(\sqrt N)$,
and $u,s,t$ satisfying \eqref{eq: condition for ustxy},
\begin{equation}\label{eq:Zbar-positive-moment}
\mathbb{E} \left[\bar Z_t^{||}(x)^p\right]
\leq C\left(s/u\right)^{Cp\varepsilon\log(1/\varepsilon)}
\mathbb{E} [Z_t(0)^p].
\end{equation}
\end{lemma}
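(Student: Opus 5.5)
The plan is to bound $\bar Z_t^{||}(x)$ by a sum of nonnegative partition functions, one for each $J$, apply Minkowski's inequality to pull the number of such $J$ out of the $p$-th moment, and then compare each term with $Z_t(0)$ by translation invariance. By \eqref{eq: def of bar Z}, $\bar Z_t^{||}(x)$ is a sum over $J\subset\mathcal J_{u,s}$ with $|J|\le\varepsilon M$. Each summand
\[
Z^{(J)}_t(x):={\mathrm E}_x\Big[e_u e_{s,t}\prod_{j\in J}d_j\Big]
\]
is a nonnegative partition function. In it the disorder is switched on exactly on the time set $T_J:=(0,u]\cup\bigcup_{j\in J}\big((2^j,2^{j+1}]\cap(u,s]\big)\cup(s,t]$ and is replaced by $1$ elsewhere. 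Minkowski's inequality in $L^p(\mathbb P)$ gives
\[
\mathbb E\big[\bar Z_t^{||}(x)^p\big]^{1/p}\le \sum_{|J|\le\varepsilon M}\mathbb E\big[Z^{(J)}_t(x)^p\big]^{1/p}.
\]
The number of such $J$ is $\sum_{r\le\varepsilon M}\binom Mr\le \exp(2\varepsilon\log(1/\varepsilon)M)$, by the same Stirling bound used in the proof of Lemma~\ref{lem: swapping small-chaos}. Since $M\le \log_2(s/u)+1$, this count is at most $C(s/u)^{C\varepsilon\log(1/\varepsilon)}$. Raising to the $p$-th power yields the factor $(s/u)^{Cp\varepsilon\log(1/\varepsilon)}$.

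It then remains to show that $\mathbb E[Z^{(J)}_t(x)^p]\le \mathbb E[Z_t(0)^p]$ for every $J$. Translation invariance lets us take $x=0$. For the comparison I would use a monotonicity argument in the set of active times. Deleting the disorder on a time slab $n$ amounts to replacing $\omega(n,\cdot)$ by its conditional expectation after the exponential tilt, since $\mathbb E[e^{\beta\omega-\beta^2/2}]=1$. Concretely, $Z^{(J)}_t=\mathbb E\big[Z_t\,\big|\,\omega(n,\cdot),\,n\in T_J\big]$. This holds because $Z_t$ is linear in each factor $e^{\beta_N\omega(n,S_n)-\beta_N^2/2}$, and integrating out the independent slabs $n\notin T_J$ inside the walk expectation (by Fubini) replaces those factors by $1$. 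Conditional Jensen's inequality for $x\mapsto x^p$ then gives $\mathbb E[(Z^{(J)}_t)^p]\le\mathbb E[Z_t^p]$. Combining this with the Minkowski bound proves \eqref{eq:Zbar-positive-moment}, with $C$ independent of $p$ because the exponent is exactly $p$ times the logarithm of the combinatorial count.

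The only step requiring care is the conditional-expectation identity, specifically checking that the slabs integrated out are disjoint in time from $T_J$, so that independence applies. This holds by construction, since the blocks $(2^j,2^{j+1}]\cap(u,s]$ are disjoint from $(0,u]$ and from $(s,t]$. If one preferred to avoid conditional Jensen, an alternative is to expand the moment as the $p$-replica expectation ${\mathrm E}^{\otimes p}\big[\exp\big(\beta_N^2\sum_{a<b}\sum_{n\in T_J}\mathbf 1_{\{S^a_n=S^b_n\}}\big)\big]$. This expression is monotone increasing in $T_J$ because the exponent is nonnegative, so the same comparison follows. I expect no real obstacle beyond this bookkeeping.
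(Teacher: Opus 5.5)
Your proposal is correct and follows the same route as the paper: bound the number of admissible $J$ by $\exp(C\varepsilon\log(1/\varepsilon)M)$, pull this count out of the $p$-th moment (you use Minkowski, the paper uses H\"older), and compare each summand's $p$-th moment with $\mathbb{E}[Z_t(0)^p]$. The only difference is in that last comparison: you justify it via $Z^{(J)}_t=\mathbb{E}[Z_t\mid \omega(n,\cdot),\,n\in T_J]$ and conditional Jensen (or replica monotonicity), which is a clean, self-contained substitute for the paper's appeal to the chaos expansion.
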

\begin{proof}
We first note that 
\[
\#\{J\subset \mathcal{J}_{u,s}: |J|\le \varepsilon M\}
\le \exp(C\varepsilon\log(1/\varepsilon)M)
\le
C\left({{s}}/{u}\right)^{C\varepsilon\log(1/\varepsilon)}.
\]
For each fixed $J$, the chaos expansion (see \cite[Appendix~A.1]{CSZ-Dickman} for instance) gives
$$
\begin{aligned}
&\mathbb{E} \Big[\Big({\mathrm E} _x[e_u\prod_{j\in J}d_j]\Big)^p\Big],\quad
\mathbb{E} \Big[\Big({\mathrm E} _x[e_u e_{s,t}\prod_{j\in J}d_j]\Big)^p\Big]\leq\mathbb{E} [Z_t(0)^p].
\end{aligned}
$$
H\"older's inequality gives
\[
\mathbb{E} \left[\bar Z_t^{||}(x)^p\right]
\leq
C\left({s}/u\right)^{C p\varepsilon \log(1/\varepsilon)}
\mathbb{E} [Z_t(0)^p].
\]
\end{proof}
\subsection{Conclusion of the proof of Proposition~\ref{prop: prop moment bound}}\label{subsec:increment-conclusion}
By Corollary~\ref{cor:positive-moment-bound}, we have the following bound 
\begin{align}\label{eq: record moment}
    \sup_{T\leq N}\mathbb{E} [Z_T^p]\leq \exp\left(C(\log\log N)^2\right).
\end{align}

{By taking  $s:=t_k$ and $t:=t_{k+1}$ in \eqref{eq:ratioWriting}, it is enough to bound
\[
\mathbb{E}  \left[\left|\frac{\Delta}{Z_s(x)Z_s(y)}\right|^p\right], \quad \Delta:=Z_t(x)Z_s(y)-Z_t(y)Z_s(x).
\]
By the Cauchy--Schwarz inequality, we have
\begin{align}\label{eq:reduce-to-delta}
\mathbb{E} \left[
\left|
\frac{\Delta}{Z_s(x)Z_s(y)}
\right|^p
\right]
&\le
\mathbb{E} [|\Delta|^{2p}]^{1/2}
\mathbb{E} [Z_s^{-4p}]^{1/2}.
\end{align}
By \eqref{eq:negative-moments} with \eqref{eq:2ndMomentBoundIntroduction}, we have $\mathbb{E} [Z_s^{-4p}] \leq e^{C_p k}$. 

It remains to bound $\mathbb{E} |\Delta|^{2p}$.  
With the above notation,
\begin{align*}
\Delta
&=
\bigl(R^{+}_{x,y}-R^{+}_{y,x}\bigr)
+\bigl(R^{-}_{x,y}-R^{-}_{y,x}\bigr)
+\Delta_{F^\complement},
\end{align*}
where
\begin{align*}
\Delta_{F^\complement}
&:=
{\mathrm E} ^{\otimes2}_{x,y}\left[e_t^1 e_s^2\mathbf1_{F_{1,2}^\complement}\right]
-{\mathrm E} ^{\otimes2}_{y,x}\left[e_t^1 e_s^2\mathbf1_{F_{1,2}^\complement}\right].
\end{align*}

Let $L_0$ be as in Lemma \ref{lem:bad-event-moment}. Then, by Lemma \ref{lem:bad-event-moment} and Lemma \ref{lem:step3}, there exists $C_p,C_{p,L}>0$ such that for all $L> L_0$ sufficiently large,  
\begin{align*}
\mathbb{E}  \left[\left|\Delta_{F^\complement}\right|^{2p}\right]
&\leq \exp(C_{p}(\log\log N)^2)
\left(\frac{u}{s}\right)^{2A} ,\\
\mathbb{E} \left[\left|R^+_{x,y}-R^+_{y,x}\right|^{2p}\right]
&\leq \exp(C_{p,L}(\log\log N)^2)
\left(\frac{u}{s}\right)^{2A}.
\end{align*}
We fix such a $L>L_0$. 

Let $\alpha>0$ from Lemma \ref{lem: swapping small-chaos}. Since $x\neq y$ and $|x-y|\leq\sqrt{t_k/2}$, we have $|x-y|^2\leq u:=2^{\lceil\log_2(|x-y|^2)\rceil}<2|x-y|^2\leq t_k$, which verifies \eqref{eq: condition for ustxy}. Combining Lemma \ref{lem: swapping small-chaos} with Lemma \ref{lem:pmomentZparallel} yields
the existence of $C_p$ and  $\varepsilon=\varepsilon(p,\alpha) \in (0,1)$,  such that
\[
\mathbb{E}  \left[\left|R^{-}_{x,y}-R^{-}_{y,x}\right|^{2p}\right] \leq C_p \exp(C_p(\log\log N)^2) \left(\frac{u}{{s}}\right)^{{\alpha p}},
\]
where we have chosen $\varepsilon$ sufficiently small so that
$4C\varepsilon\log(1/\varepsilon)\leq\alpha$ and used $u/s\leq1$. Taking an integer $p$ sufficiently large depending on $A,\alpha$ and combining the three estimates,  
we obtain 
\begin{equation}
\mathbb{E} [|\Delta|^{2p}] \leq \exp(C_{p}(\log\log N)^2)
\left(\frac{u}{s}\right)^{2A}.
\end{equation}
}
Substituting the 
 estimate above into \eqref{eq:reduce-to-delta} yields, {for some constant $C$ depending on $p$,}
\begin{align*}
\mathbb{E} \left[
\left|
\frac{Z_t(x)}{Z_s(x)}-\frac{Z_t(y)}{Z_s(y)}
\right|^p
\right]
\le
\exp(Ck+C(\log\log N)^2)\left(\frac{u}{{s}}\right)^{A}.
\end{align*}
{Since}  $u\asymp |x-y|^2$, this gives \eqref{eq:8times}.\hfill \qed

\section{{Proof of the lower bound in Theorem \ref{thm:critical-maximum}}}
\label{sec-LB}
We prove that, for every $\delta>0$,
\begin{equation}\label{eq:lower-bound-goal}
\Pbb\left(
\max_{x\in {\rm B}(\sqrt N)}\log Z_N(x)
<
(2\sqrt2-\delta)\sqrt{\log N}
\right)\longrightarrow 0 .
\end{equation}
We use the following subcritical estimate in \cite{CNZ-max}: for every $\delta>0$, there
exists a sufficiently small $\varepsilon>0$ such that, with
$N':=\lfloor N^{1-\varepsilon}\rfloor$,
\begin{equation}\label{eq:subcritical-lower-input}
\Pbb\left(
\exists x\in {\rm B}(\sqrt N):
\log Z_{N'}(x)>
\left(2\sqrt2-\frac{\delta}{2}\right)\sqrt{\log N}
\right)\longrightarrow 1 .
\end{equation}
Indeed, $\beta_N^2R_{N'}=1-\varepsilon+O((\log N)^{-1})$.
Then, the main result in \cite{CNZ-max} with change of variables \eqref{eq: transformation of Zt} gives, on the smaller box
 ${\rm B}(\sqrt{N'})$,
\begin{align*}
    (\sqrt{\log N})^{-1}{\max_{x\in {\rm B}(\sqrt{N'})}\log Z_{N'}(x)}
&\xrightarrow{\mathbb P}
\sqrt{2(1-\varepsilon)}
\int_0^1\sqrt{\frac{1-\varepsilon}{1-(1-\varepsilon)v}}\,dv\\
&=2\sqrt2(1-\sqrt\varepsilon).
\end{align*}
Choosing $\varepsilon$ so that
$2\sqrt2\sqrt\varepsilon<\delta/2$, and using
${\rm B}(\sqrt{N'})\subset {\rm B}(\sqrt N)$, this gives
\eqref{eq:subcritical-lower-input}.

For $m<n$, set
$$
Z_{[m,n]}(m,y)
:=
{\mathrm E} _y\left[
\exp\left(
\sum_{j=m+1}^{n}
\left(\beta_N\omega(j,S_{j-m})-\frac{\beta_N^2}{2}\right)
\right)
\right],
$$
and let $\F_m:=\sigma(\omega(j,z):1\le j\le m,\ z\in\Zbb^2)$.
We also write $\mu_m^x$ for the polymer endpoint law at time $m$ started
from $x$.

Fix $\delta>0$, choose $\varepsilon$ so that
\eqref{eq:subcritical-lower-input} holds. Let
$$
\mathcal E_1:=
\left\{
\exists x\in {\rm B}(\sqrt N):
\log Z_{N'}(x)>
\left(2\sqrt2-\frac{\delta}{2}\right)\sqrt{\log N}
\right\}.
$$
On the event $\mathcal E_1$, choose an $\F_{N'}$-measurable point
$x_*\in {\rm B}(\sqrt N)$ satisfying
$\log Z_{N'}(x_*)>(2\sqrt2-\delta/2)\sqrt{\log N}$, and define it
arbitrarily on $\mathcal E_1^\complement$. By the Markov property,
\begin{equation}\label{eq:ratio-after-cut}
\frac{Z_N(x_*)}{Z_{N'}(x_*)}
=
\sum_y \mu_{N'}^{x_*}(y)\,Z_{[N',N]}(N',y).
\end{equation}
By Corollary~\ref{cor:positive-moment-bound}, there
exists $C_2>0$ such that $\mathbb{E} [Z_N^2]\leq
\exp(C_2(\log\log N)^2)$. Fix $\alpha>C_2/2$.
Since $\mathbb{E} [Z_{N-N'}^2]\leq\mathbb{E} [Z_N^2]$, Proposition~\ref{prop:iterated-lower-tail} with exponential Makov's inequality, applied with $T=N$, gives
\begin{equation}\label{eq:remaining-lower-tail}
\Pbb\left(\log Z_{N-N'}(0)<-\alpha(\log\log N)^2\right)
\leq\exp\left(-\frac{(\alpha-C_2/2)^2}{2C_2}
(\log\log N)^2\right)\longrightarrow0.
\end{equation}
Since the future environment is independent of $\F_{N'}$ and is shift
invariant, for every $y$,
$$
\begin{aligned}
&\Pbb\left(
\log Z_{[N',N]}(N',y)<-\alpha(\log\log N)^2
\,\middle|\,\F_{N'}\right)
=\Pbb\left(
\log Z_{N-N'}(0)<-\alpha(\log\log N)^2\right).
\end{aligned}
$$
Define
$$
\begin{aligned}
  &  \mathcal R_N:=
\sum_y \mu_{N'}^{x_*}(y)
\1\{\log Z_{[N',N]}(N',y)<-\alpha(\log\log N)^2\},\quad \mathcal E_2:=\left\{\mathcal R_N\ge \frac12\right\}.
\end{aligned}
$$
Then
$$
\E\left[\mathcal R_N\,\middle|\,\F_{N'}\right]
=\Pbb\left(
\log Z_{N-N'}(0)<- \alpha(\log\log N)^2
\right),
$$
and Markov's inequality gives
\begin{equation}\label{eq:E2-bound}
\Pbb\left(\mathcal E_2\,\middle|\,\F_{N'}\right)
\le 2\Pbb\left(
\log Z_{N-N'}(0)<- \alpha(\log\log N)^2
\right).
\end{equation}
Consequently,
$$
\Pbb(\mathcal E_1\cap \mathcal E_2^\complement)
\ge
1-\Pbb(\mathcal E_1^\complement)-2\Pbb\left(
\log Z_{N-N'}(0)<- \alpha(\log\log N)^2
\right)
\longrightarrow 1 .
$$
On $\mathcal E_1\cap\mathcal E_2^\complement$,
\eqref{eq:ratio-after-cut} gives
$$\!
\frac{Z_N(x_*)}{Z_{N'}(x_*)}
\geq e^{-\alpha(\log\log N)^2}
\sum_y\mu_{N'}^{x_*}(y)
\1\{\log Z_{[N',N]}(N',y)\!\geq\!-\alpha(\log\log N)^2\}
\!\!\geq\!\frac12e^{-\alpha(\log\log N)^2}.
$$
By the definition of $\mathcal E_1$, {this implies that}
\begin{align*}
    \log Z_N(x_*)
&\ge
\left(2\sqrt2-\frac{\delta}{2}\right)\sqrt{\log N}-\alpha(\log\log N)^2-\log 2
\geq
\left(2\sqrt2-\delta\right)\sqrt{\log N}.
\end{align*}
Since
$x_*\in {\rm B}(\sqrt N)$, this proves \eqref{eq:lower-bound-goal}.

\begin{appendices}
\section{Maximum  in subcritical regime: Proof of Lemma~\ref{lem:CNZ-upper-tail}}
\label{app-pflem3.4}
\begin{proof}
{Due to spatial translation invariance of the law of the environment, it is enough to consider $z=(0,0)$.}
We use the barrier argument of \cite[Section 3]{CNZ-max}.
Put $n:=\lfloor N^{1-\varepsilon}\rfloor$.
Throughout the proof, $\varepsilon$ is fixed before $N\to\infty$,
and we suppress the superscript $\beta_N$ on partition functions.
Choose $0<\rho<1/3$ so small that $2\sqrt2\rho<\eta/8$, and then
$A:=A(\eta)>16$ so large that
$$
 (1-\rho)^2<\frac{1}{1+2/A},
 \qquad \frac{\sqrt2(1+\rho)}{A}<\frac\eta8.
$$
Set $M:=\lceil A/\varepsilon\rceil$ {(not to be confused with $M$ in Section \ref{sec: difference bound})}, $s_0:=0$, and, for
$1\leq k\leq M$, set
$$
 s_k:=\lceil n^{k/M}\rceil,\qquad
 W_{k,N}(x):=\frac{Z_{s_k}(x)}{Z_{s_{k-1}}(x)},
$$
and
$$
\lambda^2_k= \lambda_{k,\varepsilon,N}^2
 :=\log\frac{1-\widehat\beta_{N,\varepsilon}^2(k-1)/M}
                 {1-\widehat\beta_{N,\varepsilon}^2k/M}.
$$
Thus $\log Z_n(x)=\sum_{k=1}^M\log W_{k,N}(x)$, since $Z_0=1$. {Note also that $\lambda_k$ is bounded above by a constant independent of $N$, and in fact due to our choice of $M$, by a constant that is independent of $\varepsilon$.}

We first bound the barrier height. The limiting variance profile is
$$
 \sigma_\varepsilon(u):=
 \sqrt{\frac{1-\varepsilon}{1-(1-\varepsilon)u}},
 \qquad 0\leq u\leq1,
$$
and
\begin{equation}
\label{eq:subcritical-coeff}
 \sqrt{1-\varepsilon}\,\sqrt2
 \int_0^1\sigma_\varepsilon(u)\,du
 =2\sqrt2(1-\sqrt\varepsilon).
\end{equation}
Since $\sigma_\varepsilon$ is increasing, comparison with the right
Riemann sum gives, for fixed $\varepsilon,M$,
$$
 \frac{\sqrt2}{\sqrt M}\sum_{k=1}^M\lambda_k
 \leq\sqrt2\int_0^1\sigma_\varepsilon(u)\,du
       +\frac{\sqrt2}{M\sqrt\varepsilon}+o_N(1).
$$
Define
$$
 f_k:=\frac{\sqrt2(1+\rho)}{\sqrt M}\lambda_k\sqrt{(1-\varepsilon)\log N},
 \qquad a:=\frac\eta4\sqrt{(1-\varepsilon)\log N}.
$$
By our choices of $\rho,A$, for $0<\varepsilon<1/2$ and
$N$ sufficiently large,
$$
 \sum_{k=1}^Mf_k\leq(2\sqrt2+\eta/2)\sqrt{\log N},
 \qquad
 \sum_{k=1}^Mf_k+a+M<(2\sqrt2+\eta)\sqrt{\log N}.
$$

For the moment parameters, set
$$
 \widehat q_k:=\frac{\sqrt2(1-\rho)\sqrt{(1-\varepsilon)\log N}}{\sqrt M\lambda_k},
 \qquad q_k:=\lfloor\widehat q_k\rfloor.
$$
{In particular, all $q_k$ are positive integers for $N$ large enough, due to the presence of the $\log N$ factor.}
To check admissibility, {that is, the condition \cite[(C.2)]{CNZ-max},} note that, for $N$ sufficiently large,
$\widehat\beta_{N,\varepsilon}^2/(M(1-\widehat\beta_{N,\varepsilon}^2k/M))\leq2/A$. The inequality $\log(1+t)\geq t/(1+t)$ yields
\begin{align*}
   \frac{1}{(1-\varepsilon)\log N}\binom{q_k}{2}
 &\leq\frac{(1-\rho)^2}{M\lambda_k^2}\leq(1-\rho)^2\left(\frac{1-\widehat\beta_{N,\varepsilon}^2k/M}{\widehat\beta_{N,\varepsilon}^2}+\frac1M\right)\\
& \leq(1-\rho)^2(1+2/A)\frac{1-\widehat\beta_{N,\varepsilon}^2k/M}{\widehat\beta_{N,\varepsilon}^2}.
\end{align*}
Consequently, {the analogue of \cite[(C.2)]{CNZ-max} holds, since}
$$
 \limsup_{N\to\infty}\frac{1}{(1-\varepsilon)\log N}\binom{q_k}{2}
 <\frac{1-(1-\varepsilon)k/M}{1-\varepsilon},
 \qquad 1\leq k\leq M.
$$

Set $r_{-1}:=r_0:=1$ and $r_j:=\lceil n^{j/(2M)}\rceil$ for
$1\leq j\leq M$, and define the block maxima
$$
 \overline W_{k,N}:=
 \max_{x\in[0,r_{k-2})^2\cap\mathbb Z^2}W_{k,N}(x).
$$
By \cite[Lemma~3.7]{CNZ-max}, for nonnegative 
$\alpha_k,\ldots,\alpha_M$,
\begin{equation}\label{eq:CNZ-prob-bound}
\begin{aligned}
 &\mathbb P\left(
   \log_+\overline W_{\ell,N}\geq\alpha_\ell
   \quad\forall\ell\in\{k,\ldots,M\}\right)\\
 &\qquad\leq n^{-2}+n^{2/M}
 \exp\left(-\sum_{\ell=k}^Mq_\ell\alpha_\ell
       +\sum_{\ell=k}^M\lambda_\ell^2\binom{q_\ell}{2}\right).
\end{aligned}
\end{equation}

For $1\leq k\leq M$, let $\mathcal B_k$ consist of the vectors
$(\alpha_k,\ldots,\alpha_M)$ with nonnegative integer entries satisfying
$$
 \begin{aligned}
 a+\sum_{\ell=k}^Mf_\ell
 &<\sum_{\ell=k}^M\alpha_\ell
 \leq a+\sum_{\ell=k}^Mf_\ell+1,\\
 \sum_{\ell=j}^M\alpha_\ell
 &\leq a+\sum_{\ell=j}^Mf_\ell,
 \qquad k<j\leq M.
 \end{aligned}
$$
If $\log Z_n(x)>\sum_{\ell=1}^Mf_\ell+a+M$, then
$\sum_{\ell=1}^M\lfloor\log_+W_{\ell,N}(x)\rfloor
>\sum_{\ell=1}^Mf_\ell+a$.
{Choose the largest index $k$ for which the inequality $\sum_{\ell=k}^M\lfloor\log_+W_{\ell,N}(x)\rfloor
>\sum_{\ell=k}^Mf_\ell+a$ holds.}
This produces 
$\alpha\in\mathcal B_k$ with  {$\lfloor \log_+W_{\ell,N}(x)\rfloor=\alpha_\ell$ for every $\ell>k$ and 
$\lfloor \log_+W_{k,N}(x)\rfloor\geq \alpha_k$, and in particular we have that
$\log_+W_{\ell,N}(x)\geq\alpha_\ell$ for every $\ell\geq k$.}
 {(See \cite[Proposition 3.1]{CNZ-max} for a similar argument.)} Therefore,
\begin{align}
&   \mathbb P\left(\max_{x\in {\rm B}(z,\sqrt{N^{1-\varepsilon}})}
       \log Z_n(x)>(2\sqrt2+\eta)\sqrt{\log N}\right)\label{eq: union bound for max subcritical}\\
       &\leq \sum_{k=1}^M\sum_{\alpha\in\mathcal B_k} \mathbb P\left(\exists x\in {\rm B}(z,\sqrt{N^{1-\varepsilon}}):\ \log_+W_{\ell,N}(x)\geq\alpha_\ell\ \forall\ell\in\{k,\ldots,M\}\right).\nonumber
       \end{align}

Moreover, for a constant $C=C(\eta,\varepsilon,M)$,
$$
 \sup_{\alpha\in\mathcal B_k}\sum_{\ell=k}^M\alpha_\ell
 \leq C\sqrt{(1-\varepsilon)\log N},\qquad
 |\mathcal B_k|\leq(C\sqrt{(1-\varepsilon)\log N}+2)^M.
$$

Since $f_k$ is increasing, \cite[Lemma~2.1]{CNZ-max} gives
$$
 \sum_{\ell=k}^M\frac{\alpha_\ell}{f_\ell}
 \geq M-k+1+\frac{a}{f_M},
 \qquad \alpha\in\mathcal B_k.
$$
Multiplying by $2(1-\rho)(1+\rho)(1-\varepsilon)\log N/M$ and allowing for integer
rounding, we obtain
$$
 \sum_{\ell=k}^Mq_\ell\alpha_\ell
 \geq\frac{2(1-\rho)(1+\rho)(M-k+1)}{M}(1-\varepsilon)\log N
       +\widehat q_Ma-C\sqrt{(1-\varepsilon)\log N}.
$$
On the other hand,
$$
 \sum_{\ell=k}^M\lambda_\ell^2\binom{q_\ell}{2}
 \leq\frac{(1-\rho)^2(M-k+1)}{M}(1-\varepsilon)\log N.
$$
Since $2(1-\rho)(1+\rho)-(1-\rho)^2=1+2\rho-3\rho^2>1$,
the exponent in \eqref{eq:CNZ-prob-bound} is at most
$$
 -\frac{M-k+1}{M}(1-\varepsilon)\log N-\widehat q_Ma+C\sqrt{(1-\varepsilon)\log N}.
$$

We cover ${\rm B}(z,\sqrt{N^{1-\varepsilon}})$ by at most
$Cn/r_{k-2}^2\leq Cn^{(M-k+2)/M}$ translates of
$[0,r_{k-2})^2\cap\mathbb Z^2$. Thus, \eqref{eq: union bound for max subcritical} and \eqref{eq:CNZ-prob-bound}  give
$$
 \begin{aligned}
 &\mathbb P\left(\max_{x\in {\rm B}(z,\sqrt{N^{1-\varepsilon}})}
       \log Z_n(x)>(2\sqrt2+\eta)\sqrt{\log N}\right)\\
 &\quad\leq CM(C\sqrt{(1-\varepsilon)\log N}+2)^M \\
 &\qquad \times
 \left[n^{-1+1/M}
       +\exp\left(\frac{3(1-\varepsilon)\log N}{M}-\widehat q_Ma+C\sqrt{(1-\varepsilon)\log N}\right)\right].
 \end{aligned}
$$
Finally, $\lambda_M^2=\log(1+\widehat\beta_{N,\varepsilon}^2/(M(1-\widehat\beta_{N,\varepsilon}^2)))\leq \widehat\beta_{N,\varepsilon}^2/(M(1-\widehat\beta_{N,\varepsilon}^2))$,
so, for $N$ sufficiently large,
$$
 \widehat q_Ma
 \geq\frac\eta4\sqrt2(1-\rho)
          \sqrt{\frac{1-\widehat\beta_{N,\varepsilon}^2}{\widehat\beta_{N,\varepsilon}^2}}\,(1-\varepsilon)\log N
 \geq\frac{\eta(1-\rho)}4\sqrt\varepsilon\,(1-\varepsilon)\log N.
$$
Choose $\varepsilon_0<1/2$ so small that
$3\varepsilon/A\leq\eta(1-\rho)\sqrt\varepsilon/8$ for
$0<\varepsilon<\varepsilon_0$.
  Using $1-\varepsilon>1/2$, we obtain
\eqref{eq:CNZ-tail} with some $c>0$ independent of
$\varepsilon$. 
\end{proof}
\section{A stopping-time estimate}
We bound the factor contributed by a visit to the origin at a prescribed
dyadic scale. This is the estimate used in Lemma~\ref{lem:step3}.

Let $S$ and $\tilde S$ be two independent simple random walks on
$\mathbb Z^2$. We write ${\mathrm E} _0$ for expectation when both walks
start at the origin and $p_m(z):=\mathrm P_0(S_m=z)$. 

\begin{lemma}\label{lem:weighted-first-hit}
For every fixed $B>0$, there exists $C=C(B)>0$ such that,
for all sufficiently large $N$ and all integers $1\leq v\leq N$,
\begin{equation}
\label{eq-141026c}
{\mathrm E} _0\left[
\exp\left(\beta_N^2\sum_{n=1}^v\mathbf1_{\{S_n=\tilde S_n\}}\right)
\mathbf1_{\{\exists n\in(v,v+Bv]:S_n=\tilde S_n\}}\right]
\leq C\left(\frac1{\log(v+1)}
+\frac1{1+\log(N/v)}\right).
\end{equation}
In particular, for every $ {A\in [1,(\log N)/2]}$,
 uniformly for $e^A\leq v\leq Ne^{-A}$,
the left-hand side is bounded  {by $2C/A$.}
\end{lemma}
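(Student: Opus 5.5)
Write $D:=S-\tilde S$, a lazy-type random walk on $\mathbb Z^2$ started at $0$, and $L_v:=\sum_{n=1}^v\mathbf 1_{\{D_n=0\}}$. The plan is to decompose according to the last visit of $D$ to $0$ before time $v$ and then use the Markov property at that time. Let $\ell\in[0,v]$ be the last time $\le v$ with $D_\ell=0$. On $\{\ell=m\}$ we have $L_v=L_m$, and the event $\{\exists n\in(v,v+Bv]: D_n=0\}$ requires $D$ to avoid $0$ during $(m,v]$ and then hit $0$ during $(v,v+Bv]$. By the Markov property at $m$, the left-hand side of \eqref{eq-141026c} equals
\[
\sum_{m=0}^{v} \mathrm E_0\big[e^{\beta_N^2 L_m}\mathbf 1_{\{D_m=0\}}\big]\; \mathrm P_0\big(D_j\neq 0\ \forall j\in(0,v-m],\ \exists n\in(v-m,v-m+Bv]: D_n=0\big).
\]
The first factor is the renewal density $u_N(m)$ of the exponentially weighted collision local time. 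By the Dickman-type renewal estimates of \cite{CSZ-Dickman} (or directly from the geometric expansion $\sum_k (\beta_N^2)^k$ times $k$-fold convolutions of $p_{2n}(0)$), in the critical window one has $u_N(0)=1$ and $u_N(m)\le \frac{C}{m\log N}\,\frac{\log N}{1+\log(N/m)}\cdot\frac{1}{1+\log (N/m)}$ roughly. I plan to use the cruder but sufficient form $u_N(m)\le C\,\frac{1}{m\,(1+\log(N/m))^{2}}\cdot\log N$ for $1\le m\le N$. This is the standard Dickman density asymptotic $u_N(m)\approx \frac{\log N}{m}\,G_\vartheta(m/N)$ together with $G_\vartheta(r)\lesssim 1/(r\,\log^2(e/r))$.

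The second factor I bound using 2D random walk hitting estimates. Write $k=v-m$. The probability of avoiding $0$ for $k$ steps is $\asymp 1/\log(k+2)$, and conditionally on this, $|D_k|\approx\sqrt k$. The probability of then returning to $0$ within the window $(k,k+Bv]$ is $\lesssim \log(1+Bv/k)/\log v$, with the bound $C_B$ trivially available. Hence the second factor is
\[
h(k)\le \frac{C}{\log(k+2)}\min\Big(1,\frac{\log(2+v/k)}{\log v}\Big).
\]
For $m=v$ (i.e. $k=0$) this reduces to the probability of hitting $0$ in $(0,Bv]$, which is of order $1$. The term $m=v$ therefore contributes $u_N(v)\cdot O(1)$, and $u_N(v)$ is small since it is of order $1/v$ up to logarithms. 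The genuinely dangerous pieces are $m$ small ($m=0$ gives $h(v)\le C/\log v$, which produces the $1/\log(v+1)$ term) and $m$ comparable to $v$.

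Summing, I split into $m\le v/2$ and $m>v/2$. For $m\le v/2$ we have $h(v-m)\le C/\log v$, and $\sum_{m\le v/2}u_N(m)=\mathrm E[e^{\beta_N^2 L_{v/2}}]$-type renewal mass $\le C\frac{\log N}{1+\log(N/v)}\cdot\frac1{\log N}$-corrected. Concretely, I need $\sum_{m\le v}u_N(m)\lesssim \frac{\log N}{1+\log(N/v)}$. Multiplying by $C/\log v$ then gives $\frac{\log N}{\log v\,(1+\log(N/v))}\lesssim \frac1{\log v}+\frac1{1+\log(N/v)}$, using $\log N\le \log v+\log(N/v)$. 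For $m\in(v/2,v]$, use $u_N(m)\lesssim \frac{\log N}{v(1+\log(N/v))^2}$ and $\sum_{k\le v/2}h(k)\lesssim \frac{v}{\log v}$; the product gives $\frac{\log N}{\log v\,(1+\log(N/v))^2}$, which is smaller still. The "in particular" statement follows immediately, since $\log(v+1)\ge A$ and $1+\log(N/v)\ge A$.

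The main obstacle is justifying the uniform renewal density bound $u_N(m)$ and its partial sums all the way up to $m\sim N$ in the critical window. Near $m\approx N$ the sum grows like $\log N$, and this must be matched by the $1/(1+\log(N/v))$ term. I would import this from the Dickman-subordinator local estimates in \cite{CSZ-Dickman} (their Proposition on the renewal function $U_N(n)$), checking that the slight change of convention for $\beta_N$ in the footnote only shifts $\vartheta$.
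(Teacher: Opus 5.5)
There is a genuine gap in the size of your first factor. Your last-visit decomposition is an exact identity. However, $u_N(m)=\mathrm E_0[e^{\beta_N^2L_m}\mathbf 1_{\{D_m=0\}}]$ is not the renewal function that the Dickman estimates control. That function is $U(m)=\sum_{r\ge1}\sigma_N^{2r}q^{*r}(m)$ with $q_n=\mathrm P_0(D_n=0)$, which is also what your geometric expansion computes, and it satisfies $U(m)\le C\log N/(m(1+\log(N/m))^2)$. Write $e^{\beta_N^2L_m}=\prod_{j\le m}(1+\sigma_N^2\mathbf 1_{\{D_j=0\}})$. On $\{D_m=0\}$ the factor with $j=m$ equals $e^{\beta_N^2}$ rather than $\sigma_N^2$, so the Markov property gives, for $m\ge1$,
\[
u_N(m)=\frac{e^{\beta_N^2}}{\sigma_N^2}\,U(m)=\bigl(1+\sigma_N^{-2}\bigr)U(m).
\]
Hence $u_N$ exceeds your bound by a factor $\asymp\log N$. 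For instance $u_N(1)=e^{\beta_N^2}/4$, and $u_N(m)\to q_m\asymp 1/m$ for each fixed $m$, whereas your bound is $O(1/\log N)$ there.

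With the correct size, the crude estimates you then use for the second factor do not close. For $m\le v/2$, combining $h\le C/\log v$ with $\sum_{m\le v/2}u_N(m)\le C(\log N)^2/(1+\log(N/v))$ gives $C(\log N)^2/(\log(v+1)(1+\log(N/v)))$. This is of order $\log N$ both for bounded $v$ and for $v\asymp N$. For $m>v/2$, combining $u_N(m)\le C(\log N)^2/(v(1+\log(N/v))^2)$ with $\sum_{k\le v/2}h(k)\le Cv/\log v$ again gives order $\log N$ when $v\asymp N$.

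The paper sidesteps this by grouping the expansion of $\prod_{j\le v}(1+\sigma_N^2\mathbf 1_{\{D_j=0\}})$ according to the last \emph{selected} collision time, not the last actual visit. This produces $\sum_{n=0}^vU(n)F_v(v-n)$, where the factor $\sigma_N^2$ sits inside $U$ and $F_v(a)=\mathrm P_0(\exists n\in(a,a+Bv]:D_n=0)$ carries no avoidance constraint. Two crude bounds then suffice: $F_v(a)\le C(1+\log(v/(a\vee1)))/\log(v+1)$, from a short renewal-counting argument, and $\sum_{n\le m}U(n)\le C\log N/(1+\log(N/m))$.

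Your decomposition can also be rescued, but only by sharpening both factors.
\begin{enumerate}
\item Note that $h(k)=\mathrm P_0(T_0\in(k,k+Bv])$, with $T_0$ the first return time of $D$ to $0$. A local first-return bound such as $\mathrm P_0(T_0=j)\le C/(j\log^2(j+1))$ then yields $h(v-m)\le C_B(\log(v+1))^{-2}$ for $m\le v/2$ and $\sum_{k\le v/2}h(k)\le C_Bv/(\log(v+1))^2$. This also replaces your heuristic that $|D_k|\approx\sqrt k$ given avoidance.
\item Integrating the pointwise Dickman bound gives the sharper partial sum $\sum_{m\le v}U(m)\le C\log(v+1)/(1+\log(N/v))$.
\end{enumerate}
With both, the extra factor $\log N$ is absorbed through $\log N\le\log(v+1)+1+\log(N/v)$. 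Either sharpening alone still leaves a term of order $\log N$.
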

\begin{proof}
 {The seecond claim follows from \eqref{eq-141026c} by optimizing over $v$ in the indicated range, noting that the maximizer is achieved on the boundaries of the domain.}

 {Turning to the proof of \eqref{eq-141026c},}
set
\[
D_n:=S_n-\widetilde S_n,
\qquad
q_n:=\mathrm P_0(D_n=0)=p_{2n}(0),
\]
and recall that
\[
\sigma_N^2=e^{\beta_N^2}-1.
\]
For \(n\ge1\), let
\[
U(n):=\sum_{r\ge1}(\sigma_N^2)^r q^{*r}(n),
\qquad U(0):=1.
\]
 {where $q^{*r}$ is the $r$-fold convolution of $q$.}

We first recall a renewal estimate from
\cite[Theorem~1.4 and
Proposition~1.6]{CSZ-Dickman}. Using \eqref{eq-091026b}
we have
\[
\lambda_N:=\sigma_N^2R_N
=1+O\left(\frac1{\log N}\right).
\]
The weighted renewal function  {$U_{N,\lambda_N}(n)$} in
\cite[Theorem~1.4]{CSZ-Dickman} coincides, for \(n\le N\),
with \(U(n)\).  {(Note that \cite{CSZ-Dickman} work with $q_n/R_N$, which explains the multiplication by $R_N$.)} 
\corO{From 
\cite[Proposition~1.6]{CSZ-Dickman}\footnote{\corO{We need here to increase the parameter $\vartheta$ in that theorem and use
monotonicity in \(\lambda_N\), in order to match notation and work with a fixed $\vartheta$, see the footnote to \eqref{eq-091026a}}} we obtain}
\begin{equation}\label{eq:CSZ-renewal-bound}
 U(n)\le
 C\frac{\log N}
 {n(1+\log(N/n))^2},
 \qquad 1\le n\le N.
\end{equation}
In particular, for \(1\le m\le N\),
\begin{equation}\label{eq:CSZ-renewal-sum}
 \sum_{n=1}^m U(n)
 \le
 C\frac{\log N}{1+\log(N/m)}.
\end{equation}

We next estimate the probability of a collision after a time gap.
For \(0\le a\le v\)  {and $B>0$,
 we claim that for some $C$ depending on $B$ only,}
\begin{equation}\label{eq:future-collision}
  {F_v(a):=}\mathrm P_0\big(
\exists n\in(a,a+Bv],\,D_n=0
\big) \le
 \frac{C}{\log(v+1)}
 \left(
 1+\log\frac{v}{a\vee1}
 \right).
\end{equation}
Indeed, by the strong Markov property at
\[
\tau_a:=\inf\{n>a:D_n=0\},
\]
we have
\[
\begin{aligned}
F_v(a)\sum_{j=0}^{\lfloor Bv\rfloor}q_j
&=
{\mathrm E} _0\left[
\mathbf 1_{\{\tau_a\le a+Bv\}}
\sum_{j=0}^{\lfloor Bv\rfloor}
\mathbf 1_{\{D_{\tau_a+j}=0\}}
\right].
\end{aligned}
\]
On the event \(\{\tau_a\le a+Bv\}\), for every
\(0\le j\le \lfloor Bv\rfloor\),  {we have that
\(
\tau_a+j\le a+2Bv
\).}
Hence
\[
\mathbf 1_{\{\tau_a\le a+Bv\}}
\sum_{j=0}^{\lfloor Bv\rfloor}
\mathbf 1_{\{D_{\tau_a+j}=0\}}
\le
\sum_{m=a+1}^{\lfloor a+2Bv\rfloor}
\mathbf 1_{\{D_m=0\}}.
\]
Taking expectations gives
\[
F_v(a)\sum_{j=0}^{\lfloor Bv\rfloor}q_j
\le
\sum_{m=a+1}^{\lfloor a+2Bv\rfloor}q_m.
\]
Since \(q_j\asymp 1/j\) and
\(\sum_{j\le m}q_j\asymp  \log(m+1)\), this gives
\eqref{eq:future-collision}.

We now expand
\[
e^{\beta_N^2\sum_{j=1}^v\mathbf1_{\{D_j=0\}}}
=
\prod_{j=1}^v
\left(1+\sigma_N^2\mathbf1_{\{D_j=0\}}\right).
\]
Grouping terms according to the last selected collision time and
using the Markov property gives the  identity
\begin{equation}\label{eq: identity for D F}
    \begin{aligned}
{\mathrm E} _0\left[
e^{\beta_N^2\sum_{j=1}^v\mathbf1_{\{D_j=0\}}}
\mathbf1_{\{\exists n\in(v,v+Bv]:D_n=0\}}
\right] =
\sum_{n=0}^v U(n)F_v(v-n).
\end{aligned}
\end{equation}

By \eqref{eq:future-collision}, this is at most
\begin{equation}\label{eq:main-convolution}
\frac{C}{\log(v+1)}
\sum_{n=0}^v
U(n)
\left(
1+\log\frac{v}{(v-n)\vee1}
\right).
\end{equation}

The term for  \(n=0\) in \eqref{eq:main-convolution} is bounded by
\(C/\log(v+1)\). For \(1\le n\le v/2\), the logarithmic factor in
\eqref{eq:main-convolution} is bounded by a constant, and
\eqref{eq:CSZ-renewal-sum} gives
\[
\sum_{n\le v/2}U(n)
\le C\frac{\log N}{1+\log(N/v)}.
\]
Hence this part contributes \corO{to the sum in \eqref{eq:main-convolution}} at most
\[
C\frac{\log N}{\log(v+1)(1+\log(N/v))}
\le
C\left(
\frac1{\log(v+1)}+\frac1{1+\log(N/v)}
\right).
\]

For \(v/2<n\le v\), \eqref{eq:CSZ-renewal-bound} implies 
\(
U(n)\le C{\log N}/({v(1+\log(N/v))^2}).
\)   
Since
\[
\sum_{a=0}^{\lfloor v/2\rfloor}
\left(1+\log\frac{v}{a\vee1}\right)
\le Cv,
\]
\corO{making the change of variables $a=v-n$,} the contribution \corO{to the sum in \eqref{eq:main-convolution}} is bounded by
\[
C\frac{\log N}{\log(v+1)(1+\log(N/v))^2}
\le
C\left(
\frac1{\log(v+1)}+\frac1{1+\log(N/v)}
\right).
\]
Combining these bounds with \eqref{eq: identity for D F}, we obtain \eqref{eq-141026c}. 
\end{proof}
\section{A swapping identity}
\label{sec-swap}

\begin{lemma}\label{lem: swapping lemma}
Let $m\ge 0$ and let $\mathcal A$ be a finite set. For each $q\in\{0,\dots,m\}$, the swapping operator
$\tau_q$ is a bijection of $\mathcal A$. Assign a set
$I_U\subseteq\{0,\dots,m\}$ to each $U\in\mathcal A$, and suppose that
\[
I_{\tau_qU}=I_U
\qquad
\forall q\in\{0,\dots,m\},\ U\in\mathcal A.
\]
Let $A,B,F_0,\ldots,F_m:\mathcal A\to\mathbb R$. Suppose that, for
every $q\in\{0,\dots,m\}$ and $U\in\mathcal A$,
\[
A(\tau_qU)-B(\tau_qU)
=-\bigl(A(U)-B(U)\bigr)
\]
and whenever $\ell\ne q$, 
\[
F_\ell(\tau_qU)=F_\ell(U).
\]
Let $K_0,\ldots,K_m:\mathcal A\to\mathbb R$, and suppose that, for
every $q\in\{0,\dots,m\}$, $U\in\mathcal A$, and $\ell\le q$,
\[
K_\ell(\tau_qU)=K_\ell(U).
\]
Define
\[
\Gamma_\ell(U):=F_\ell(U)-K_\ell(U).
\]
Then
\begin{align*}
&\sum_{U\in\mathcal A}A(U)\prod_{\ell=0}^mF_\ell(U)
-\sum_{U\in\mathcal A}B(U)\prod_{\ell=0}^mF_\ell(U)\\
&\qquad=
\sum_{U\in\mathcal A}A(U)
\prod_{\ell\in I_U}\Gamma_\ell(U)
\prod_{\ell\in\{0,\dots,m\}\setminus I_U}F_\ell(U) \\
&\qquad \qquad -
\sum_{U\in\mathcal A}B(U)
\prod_{\ell\in I_U}\Gamma_\ell(U)
\prod_{\ell\in\{0,\dots,m\}\setminus I_U}F_\ell(U).
\end{align*}
\end{lemma}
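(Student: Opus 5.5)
Write $F_\ell=\Gamma_\ell+K_\ell$ for every $\ell\in I_U$ and expand the product over $I_U$. For each $U$,
\[
\prod_{\ell\in I_U}F_\ell(U)=\sum_{T\subseteq I_U}\prod_{\ell\in I_U\setminus T}\Gamma_\ell(U)\prod_{\ell\in T}K_\ell(U).
\]
The term $T=\emptyset$ gives exactly the right-hand side of the lemma. It therefore suffices to show that every term with $T\neq\emptyset$ contributes zero to the difference $\sum_U (A-B)(U)\,(\cdots)$. Since $I_U$ depends on $U$, we first group $U$ according to $I_U$ and the choice of $T$. To avoid double counting, we index each nonempty $T$ by its largest element $q:=\max T$.

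Fix $q$ and consider
\[
\Sigma_q:=\sum_{U\in\mathcal A}\bigl(A(U)-B(U)\bigr)\sum_{\substack{T\subseteq I_U\\ \max T=q}}\prod_{\ell\in I_U\setminus T}\Gamma_\ell(U)\prod_{\ell\in T}K_\ell(U)\prod_{\ell\notin I_U}F_\ell(U).
\]
This is the wrong decomposition, and we replace it with a telescoping one. Order $I_U=\{\ell_1<\dots<\ell_r\}$ and write
\[
\prod_{i}F_{\ell_i}-\prod_i\Gamma_{\ell_i}=\sum_{j=1}^r\Bigl(\prod_{i<j}F_{\ell_i}\Bigr)K_{\ell_j}\Bigl(\prod_{i>j}\Gamma_{\ell_i}\Bigr).
\]
Here the factor in position $q=\ell_j$ is $K_q$, the factors before it are $F$'s, and the factors after it are $\Gamma$'s. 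Now apply the involution $\tau_q$ to the summand $(A-B)(U)\prod_{i<j}F_{\ell_i}K_q\prod_{i>j}\Gamma_{\ell_i}\prod_{\ell\notin I_U}F_\ell$. We check how each factor transforms under $\tau_q$:
\begin{itemize}
\item $I_U$ is invariant, so the index $j$ with $\ell_j=q$ is preserved.
\item $A-B$ changes sign.
\item $F_\ell$ is invariant for $\ell\neq q$. This covers $\ell_i<q$ and $\ell\notin I_U$, since $q\in I_U$.
\item $K_q$ is invariant, because $\ell\le q$ is allowed.
\item For $\ell_i>q$, both $F_{\ell_i}$ and $K_{\ell_i}$ need to be invariant.
\end{itemize}
The last point is the main obstacle, because $K_\ell$ is only assumed invariant for $\ell\le q$. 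So I would reverse the telescoping direction: put $\Gamma$'s before position $j$ and $F$'s after it,
\[
\prod_iF_{\ell_i}-\prod_i\Gamma_{\ell_i}=\sum_{j}\Bigl(\prod_{i<j}\Gamma_{\ell_i}\Bigr)K_{\ell_j}\Bigl(\prod_{i>j}F_{\ell_i}\Bigr).
\]
Now every $\Gamma_{\ell_i}$ appearing has $\ell_i<q$, so both $F_{\ell_i}$ and $K_{\ell_i}$ are $\tau_q$-invariant. $K_q$ is invariant, and the remaining $F_\ell$ with $\ell\neq q$ are invariant.

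With this ordering, the $j$-th sum over $U$ with $q=\ell_j$ fixed is mapped to its own negative by the bijection $\tau_q$, which preserves the set $\{U: q\in I_U\}$. Hence that sum vanishes. More precisely, sum over $q\in\{0,\dots,m\}$ of $\sum_{U:\,q\in I_U}(A-B)(U)\,G_q(U)$, where $G_q$ is $\tau_q$-invariant. Reindexing with $\tau_q$ gives $S=-S$, so $S=0$.

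Summing over $q$ and subtracting shows that $\sum_U (A-B)\prod_\ell F_\ell$ equals $\sum_U (A-B)\prod_{I_U}\Gamma_\ell\prod_{\ell\notin I_U}F_\ell$, which is the claim. The only delicate point is the choice of telescoping order so that every factor other than $A-B$ is $\tau_q$-invariant. The rest is bookkeeping with finite sums over the finite set $\mathcal A$.
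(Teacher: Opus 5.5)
Your final argument, the reversed telescoping with $\Gamma$'s at indices below $q$, $K_q$ at $q$, and $F$'s above, is correct and is essentially the paper's proof. Your $q$-th telescoping term is exactly the paper's inclusion--exclusion sum over nonempty $S\subseteq I_U$ with $\max S=q$, and the cancellation via $\tau_q$ on $\{U: q\in I_U\}$ uses the same invariances; in a write-up, drop the two abandoned decompositions.
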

\begin{proof}
For every $U\in\mathcal A$,
\begin{align*}
&\prod_{\ell=0}^mF_\ell(U)
-
\prod_{\ell\in I_U}\bigl(F_\ell(U)-K_\ell(U)\bigr)
\prod_{\ell\in\{0,\dots,m\}\setminus I_U}F_\ell(U)\\
&\qquad=
\sum_{\emptyset\ne S\subseteq I_U}
(-1)^{|S|+1}
\prod_{\ell\in S}K_\ell(U)
\prod_{\ell\in\{0,\dots,m\}\setminus S}F_\ell(U).
\end{align*}
For each nonempty $S$, we show that summing the corresponding term
against $A(U)-B(U)$ gives zero.

Fix a nonempty $S\subseteq\{0,\dots,m\}$, set $q:=\max S$, and define
\[
\mathcal A_S:=\{U\in\mathcal A:S\subseteq I_U\}.
\]
The identity $I_{\tau_qU}=I_U$ shows that $\tau_q$ restricts to a
bijection of $\mathcal A_S$. Since $q\in S$, every factor
$F_\ell$ in
\[
W_S(U):=
\prod_{\ell\in S}K_\ell(U)
\prod_{\ell\in\{0,\dots,m\}\setminus S}F_\ell(U)
\]
has $\ell\ne q$. Also, every $\ell\in S$ satisfies $\ell\le q$.
Consequently,
\[
W_S(\tau_qU)=W_S(U).
\]
Using the bijection $U\mapsto\tau_qU$ on $\mathcal A_S$,
\begin{align*}
\sum_{U\in\mathcal A_S}(A(U)-B(U))W_S(U)
&=
\sum_{U\in\mathcal A_S}
\bigl(A(\tau_qU)-B(\tau_qU)\bigr)W_S(\tau_qU)\\
&=
-\sum_{U\in\mathcal A_S}(A(U)-B(U))W_S(U).
\end{align*}
Hence, this sum is zero. Summing over all nonempty $S\subseteq\{0,\dots,m\}$ 
proves the identity.
\end{proof}

\end{appendices}

\section*{Acknowledgments}
We thank Stefan Junk for explaining the proof of \cite{Junk25} to the second author. S.N was supported by JSPS KAKENHI
Grant Numbers JP24K16937 and JP25K00911. O.Z. was partially supported by the Israel Science Foundation grant \#621/24.

\section*{Declaration on the use of AI}
We originally developed all mathematical arguments needed to prove our main results without the use of AI tools. In particular, we proved a crude lower-tail estimate for $\log Z_N$ {(a crude version of Proposition \ref{prop:iterated-lower-tail})}
using an inductive argument together with Talagrand's concentration inequality; that version was   sufficient for our purposes. During subsequent refinement of the proofs with AI assistance, ChatGPT Astra suggested that we use \cite{Chen,LX-moments}, where a stronger lower-tail estimate is available. {The present version of Proposition \ref{prop:iterated-lower-tail} uses the latter.}  We also use ChatGPT to check the mathematical arguments  and generate Figure~\ref{fig: fig1}.

\end{document}